\newtheorem{theorem}{Theorem}
\newtheorem{proposition}{Proposition}
\newtheorem{lemma}{Lemma}
\newtheorem{assumption}{Assumption}
\newtheorem{condition}{Condition}
\newtheoremstyle{named}{}{}{\itshape}{}{\bfseries}{.}{.5em}{#1 \thmnote{#3}}
\theoremstyle{named}
\newtheorem*{namedtheorem}{Theorem}
\definecolor{gold}{rgb}{0.85,0.65,0}
\colorlet{dgreen}{green!60!black}
\def\beq{\begin{equation}}
\def\eeq{\end{equation}}
\def\fnote#1{\footnote}
\newcommand{\grad}{\ensuremath{\nabla}}
\def\E{{\mathbb{E}}}
\def\bR{{\mathbf{R}}}
\def\cF{{\cal F}}
\def\cG{{\cal G}}
\def\cS{{\cal S}}
\newcommand{\bbR}{\mathbb{R}}
\newcommand{\CF}{\mathcal{F}}
\newcommand{\CG}{\mathcal{G}}
\DeclareMathOperator{\opt}{opt}
\DeclareMathOperator*{\argmin}{arg\,min}
\DeclareMathOperator{\diam}{diam}
\DeclarePairedDelimiter\abs{\lvert}{\rvert}%
\let\oldabs\abs
\def\abs{\@ifstar{\oldabs}{\oldabs*}}
\title{Conditional Gradient Methods with Standard LMO for Stochastic Simple Bilevel Optimization}
\author{%
  Khanh-Hung Giang-Tran \\
  Cornell University \\ \texttt{tg452@cornell.edu}
  \And
  Soroosh Shafiee \\
  Cornell University \\
  \texttt{shafiee@cornell.edu} \\
  \And
  Nam Ho-Nguyen \\
  The University of Sydney \\
  \texttt{nam.ho-nguyen@sydney.edu.au} 
}
\date{}
\begin{document}

\maketitle

\begin{abstract}
   We propose efficient methods for solving stochastic simple bilevel optimization problems with convex inner levels, where the goal is to minimize an outer stochastic objective function subject to the solution set of an inner stochastic optimization problem. 
    Existing methods often rely on costly projection or linear optimization oracles over complex sets, limiting their scalability. 
    To overcome this, we propose an iteratively regularized conditional gradient approach that leverages linear optimization oracles exclusively over the base feasible set. Our proposed methods employ a vanishing regularization sequence that progressively emphasizes the inner problem while biasing towards desirable minimal outer objective solutions. In the one-sample stochastic setting and under standard convexity assumptions, we establish non-asymptotic convergence rates of $\Tilde{O}(t^{-1/4})$ for both the outer and inner objectives. In the finite-sum setting with a mini-batch scheme, the corresponding rates become $\Tilde{O}(t^{-1/2})$. When the outer objective is nonconvex, we prove non-asymptotic convergence rates of $\Tilde{O}(t^{-1/7})$ for both the outer and inner objectives in the one-sample stochastic setting, and $\Tilde{O}(t^{-1/4})$ in the finite-sum setting. Experimental results on over-parametrized regression and dictionary learning tasks demonstrate the practical advantages of our approach over existing methods, confirming our theoretical findings.
\end{abstract}

\section{Introduction}
We consider \emph{stochastic simple bilevel optimization problems}, where the goal is to minimize an outer stochastic objective function subject to the solution set of an inner stochastic convex optimization problem. Formally, the problem is defined as
\begin{equation} \label{eq:stochastic-bilevel}
    F_{\opt} := \min_{x \in \bR^d}  \left\{ F(x) := \E[f(x,\theta)] :  x \in X_{\opt} \right\}, \  \text{where } X_{\opt} := \argmin_{z \in X} \left\{ G(z) := \E[g(z,\xi)] \right\}.
\end{equation}
Here \( X \subset \bbR^d \) is a compact convex set, and both \( F \) and \( G \) are smooth with \( G\) is additionally convex, making the optimization problem~\eqref{eq:stochastic-bilevel} a possibly nonconvex problem with convex domain. This framework has broad applicability, including hyper-parameter optimization~\citep{franceschi2017forward, franceschi2018bilevel, pedregosa2016hyperparameter}, meta-learning~\citep{franceschi2018bilevel, raghu2020rapid}, reinforcement learning~\citep{konda1999actor} and game theory~\citep{stackelberg1952theory}. In particular, problem~\eqref{eq:stochastic-bilevel} arises in learning applications where $G$ is the prediction error of a model $x$ on data, and $F$ is an auxiliary objective (e.g., regularization, or a validation data set). The dependence on data is through the expectation operators and the random variables $\theta,\xi$. The bilevel formulation provides a \emph{tuning-free} alternative to the usual regularization approach where we optimize $\sigma F(x) + G(x)$ for a carefully tuned parameter $\sigma$.

Despite its wide applications in machine learning there are three primary challenges in solving \eqref{eq:stochastic-bilevel}. First, we do not have an explicit representation of the optimal set $X_{\opt}$ in general, preventing us from using some common operations in optimization such as projection onto or linear optimization over $X_{\opt}$. 
An alternative is to reformulate~\eqref{eq:stochastic-bilevel} using its \emph{value function formulation}
\begin{equation} \label{eq:recasted-stochastic-bilevel}
    \min_{x \in X} \quad  F(x) \quad
    \text{s.t.} \quad  G(x) \leq G_{\opt} := \min_{z \in X} G(z).
\end{equation}
However, this leads to our second challenge. Problem~\eqref{eq:recasted-stochastic-bilevel} is inherently ill-conditioned: by definition of $G_{\opt}$, there exists no $x \in X$ such that $G(x) < G_{\opt}$, hence \eqref{eq:recasted-stochastic-bilevel} does not satisfy Slater's condition. As a result, the Lagrangian dual of \eqref{eq:recasted-stochastic-bilevel} may not be solvable, complicating the use of standard primal-dual methods. In addition, we do not know $G_{\opt}$ a priori, so an alternative is to approximate it with $\bar{G} \leq G_{\opt} + \epsilon_g$, and consider the constraint $G(x) \leq \bar{G}$ instead. However, this approach does not solve the actual bilevel problem~\eqref{eq:stochastic-bilevel}, and may still introduce numerical instability. The third challenge comes from the stochastic nature of the objectives. Since \( F \) and \( G \) are defined as expectations, their exact computation may be intractable when dealing with large-scale datasets. In single-level optimization, this is typically addressed through sampling-based methods that operate on mini-batches drawn from the distributions of $\theta$ and $\xi$ instead of the actual distributions themselves. However, such stochastic approximations introduce noise into the optimization process, necessitating new techniques to control the noise and ensure convergence.

\subsection{Related works}
\paragraph{Simple bilevel optimization problems with nonconvex outer objectives}
This setting has been studied by \citet{JiangEtAl2023} in the deterministic case and by \citet{CaoEtAl2023} in the stochastic case. These works propose conditional gradient methods that require performing linear optimization over intersections of the base domain $X$ with a halfspace. In contrast, our approach removes this requirement by relying solely on linear optimization over $X$. We note that we do not address the more general case with both nonconvex inner and outer objectives considered in \citep{GongEtAl2021,hsieh2024careful,cao2025complexity}. In the remainder of this section, we will focus on simple bilevel problems with convex inner and outer~objectives.

\paragraph{Iterative regularization methods.} Simple bilevel optimization extends Tikhonov regularization~\citep{tikhonov1977solutions} beyond $\ell_2$ penalties by considering a blended objective $\sigma F(x) + G(x)$, where $\sigma >0 $ modulates the trade-off between inner and outer objectives. As $\sigma \to 0$, the problem converges to a bilevel form that prioritizes the inner objective. \citet{FriedlanderEtAl2008} established fundamental conditions ensuring solution existence for $\sigma > 0$, forming the basis for \emph{iterative regularization}, where $\sigma$ is gradually reduced. This idea was first implemented by \citet{Cabot2005} in the unconstrained case $X = \bR^d$ using a positive decreasing sequence $\sigma_t$, and later extended to general convex constraints by \citet{DuttaPandit2020}. While these methods use proximal steps, which can be computationally expensive, \citet{Solodov2006} proposed a more efficient gradient-based alternative. \citet{HelouSimoes2017} further generalized the framework to non-smooth settings via an $\epsilon$-subgradient method. These works show asymptotic convergence but lack convergence rates.
\citet{AminiEtAl2019} analyzed an iterative regularized projected gradient method and established a convergence rate of $O(t^{-(1/2-b)})$ for the inner objective, assuming non-smooth $F$ and $G$, compact $X$, and strong convexity of $f$. \citet{KaushikYousefian2021} later removed the strong convexity assumption and proved rates of $O(t^{-b})$ and $O(t^{-(1/2-b)})$ for the inner and outer objectives, respectively, for any $b \in (0,1/2)$. \citet{Malitsky2017} studied an accelerated variant based on Tseng's method \citep{Tseng2008}, achieving an $o(t^{-1})$ rate for the inner problem. More recently, \citet{MerchavEtAl2024} proposed an accelerated proximal scheme with a $O(\log(t+1)/t)$ rate under smoothness assumption and $O(t^{-2})$ and $O(1/t^{2-\gamma})$ for inner and outer objectives, respectively, under a H\"olderian error bound condition with modulus $\gamma \in (1,2)$. Our work also follows an iterative regularization approach, but avoids expensive projections or proximal operations by leveraging linear optimization oracles.

\paragraph{Projection-free methods.} Several important applications have domains $X$ where projection-based operations are inefficient, yet linear optimization over $X$ is easier.
To address the limitations of projection-based methods, recent work has focused on projection-free bilevel optimization via linear optimization oracles.
\citet{JiangEtAl2023} approximated $X_{\opt}$ by replacing the constraint $G(x) \leq G_{\opt}$ with a linear approximation, which removed the need for Slater's condition, and applying the conditional gradient method, refining the approximation at each step. They established $O(t^{-1})$ convergence rates for both objectives under smoothness and compactness assumptions in case $f$ is convex. Under non-convex setting, they claimed the convergence rates of $O(t^{-1/2})$ to the stationary point of the problem. However, their method requires a pre-specified tolerance parameter $\epsilon_g$, and only guarantees $(\epsilon_g/2)$-infeasibility.
\citet{DoronShtern2022} proposed an alternative approach based on sublevel sets of the outer function~$F$. Their method performs conditional gradient updates over sets of the form $X \times \{x: F(x) \leq \alpha\}$, with a surrogate $\hat{G}_t$ that is updated iteratively. They achieved rates of $O(t^{-1})$ for the inner and $O(t^{-1/2})$ for the outer objective under composite structure in $G$ and an error bound on $F$.
While linear optimization over $X$ may be efficient, this is not always true for sets like $X \cap H$, where $H$ is a halfspace, or $\{F(x) \leq \alpha\}$ unless $F$ has a special structure. To address this, \citet{GiangEtAl2024} introduced a conditional gradient-based iterative regularization scheme that only requires linear optimization over $X$, albeit with slower rates: $O(t^{-p})$ and $O(t^{-(1-p)})$ for the inner and outer problems, respectively, for any $p \in (0,1)$.
We extend this framework to the stochastic setting, maintaining the same oracle-based reliance on linear optimization over $X$.

\paragraph{Stochastic methods.} 
Stochastic bilevel optimization remains less explored. \citet{JalilzadehEtAl2024} developed an iterative regularization-based stochastic extragradient algorithm, requiring $O(1/\epsilon_f^4)$ and $O(1/\epsilon_g^4)$ stochastic gradient queries to achieve $\epsilon$-optimality for both objectives.
\citet{CaoEtAl2023} extended the projection-free framework of \citet{JiangEtAl2023} to the stochastic setting. Their methods achieve $\tilde{O}(t^{-1})$ convergence rate when the noise distributions have finite support, and $\tilde{O}(t^{-1/2})$ convergence rates under sub-Gaussian noise. However, their method requires linear optimization over intersections of $X$ with a halfspace. In contrast, our work relaxes this requirement by relying solely on linear optimization over the base set $X$.

\paragraph{Other methods and bilevel problem classes.} Several alternative approaches exist for simple bilevel optimization, including sequential averaging schemes \citep{SabachEtAl2017,ShehuEtAl2021,MerchavSabach2023}, sublevel set-based methods \citep{BeckSabach2014,CaoEtAl2024,ZhangEtAl2024,GongEtAl2021}, accelerated algorithms \citep{samadi2024achieving,wang2024near,chen2024penalty,CaoEtAl2024} and primal-dual strategies \citep{ShenLingqing2023}. These methods are less related to our framework and do not address the stochastic setting. 
More general bilevel problems have seen considerable attention, particularly in Stackelberg game formulations where the upper and lower levels model leader-follower dynamics \citep{ghadimi2018approximation, chen2021closing, khanduri2021near, ji2021bilevel, dagreou2022framework, kwon2023fully, yang2023achieving, lu2024first}. Recent extensions include contextual bilevel optimization with exogenous variables \citep{yang2021provably, hu2023contextual, thoma2024contextual} and problems with functional constraints at the lower level \citep{KaushikYousefian2021, petrulionyte2024functional}.
Our focus in this paper, however, is restricted to the stochastic simple bilevel problems.

\subsection{Contributions} 
We present two iterative regularization methods for solving problem \eqref{eq:stochastic-bilevel} in both stochastic and finite-sum settings. The key idea is to introduce a decreasing regularization sequence $\{\sigma_t\}_{t \geq 1}$, and at each iteration $t$, perform a gradient-based update on the composite objective $\sigma_t F + G$. As $\sigma_t$ decreases, the algorithm gradually shifts focus from $F$ to $G$, thereby steering $x_t$ toward points in $X_{\opt}$. At the same time the $\sigma_t F$ term encourages $x_t$ towards a point in $X_{\opt}$ that minimizes $F$. 
In the stochastic formulation~\eqref{eq:stochastic-bilevel}, computing exact gradients of the upper and lower objectives is computationally expensive. To address this, we employ sample-based gradient estimators. Specifically, we use the STOchastic Recursive Momentum (\texttt{STORM}) estimator~\citep{CutkoskyOrabona2019} in the one-sample stochastic setting, and the Stochastic Path-Integrated Differential EstimatoR (\texttt{SPIDER})~\citep{nguyen2017sarah, FangEtAl2018} in the finite-sum setting.

The key contributions of this paper are summarized below.

\begin{itemize}[label=$\diamond$,leftmargin=*]
    \item \textbf{One-sample stochastic setting}: We develop a projection-free method for stochastic simple bilevel optimization that under convexity of $F$ on the base set $X$ achieves high-probability convergence rates of $O(t^{-1/4}\sqrt{\log(dt^2/\delta)})$ for the outer-level problem and $O(t^{-1/4}\sqrt{\log(d/\delta)})$ for the inner-level problem, with probability at least $1-\delta$. Without convexity of $F$, the method enjoys the non-asymptotic convergence rates of $ O (t^{-\frac{1}{7}} \log ({dt^2}/{\delta} ) )$ for both the outer- and inner-level objectives. Compared to the state-of-the-art approach by \citet[Algorithm~1]{CaoEtAl2023}, which achieves the $O(t^{-1/2} \sqrt{\log(t d / \delta)})$ rate under convex settings and $O(t^{-1/3} \sqrt{\log(t d / \delta)})$ under nonconvex settings for both the outer and inner problems, our method offers several key advantages. First, it eliminates the need for optimization over intersections of the base domain with a halfspace, requiring only linear optimization over the original constraint set. Second, it removes the dependency on their pre-specified tolerance parameter $\epsilon_g$, and the associated computationally expensive initialization procedure to find an $\epsilon_g$-optimal starting point. Third, our algorithm enjoys \emph{anytime} guarantees. Overall, the proposed algorithm is a simple, single-loop procedure that is significantly easier to implement than that of~\citep{CaoEtAl2023}.

    \item \textbf{Finite-sum setting}: For problems with $n$ component functions, we establish convergence rates of $O(t^{-1/2}\sqrt{\log(t^2/\delta)})$ (outer) and $O(t^{-1/2}\sqrt{\log(1/\delta)})$ (inner) with probability $1-\delta$. Without convexity of $F$, the method enjoys the non-asymptotic convergence rates of $O (t^{-\frac{1}{4}} \log ({dt^2}/{\delta} ) )$ for both the outer- and inner-level objectives. Although \citet[Algorithm~2]{CaoEtAl2023} achieves improved $O(t^{-1} \log(t) \sqrt{\log(t/\delta)} )$ rates under convex settings and $O(t^{-1/2}  \sqrt{\log(t/\delta)} )$ rates under nonconvex settings for both objectives, it again relies on more computationally demanding halfspace-intersection oracles, requires an additional initialization step, depends on a fixed stepsize, and does not offer anytime guarantees.

    \item \textbf{Numerical validation}: We demonstrate the practical efficiency of our methods on over-parametrized regression and dictionary learning tasks, showing significant speedups over existing projection-based and projection-free approaches. The experiments validate both the convergence rates and computational advantages of our framework.
\end{itemize}
The supplementary material is organized in the appendices as follows. \cref{appendix:preparatory} establishes the preparatory lemmas used throughout our analysis. Appendices \ref{appendix:one-sample} and \ref{appendix:finite-sum} contain the detailed proofs for the results presented in \cref{sec:one-sample} and \cref{sec:finite-sum}, respectively. \cref{appendix:additional} provides additional material, including analyses of the convergence rate coefficients as well as details of the numerical implementations in \cref{sec:experiments}.

\section{Algorithm and convergence results for the one-sample stochastic setting}
\label{sec:one-sample}
In this section, we introduce the Iteratively Regularized Stochastic Conditional Gradient (\texttt{IR-SCG}) method, summarized in~\cref{alg:IR-SCG}. This approach is a conditional gradient method that leverages the \texttt{STORM} estimator to progressively reduce noise variance via momentum-based updates, assuming sub-Gaussian noise. While the use of the \texttt{STORM} estimator in projection-free methods was first explored in \citep{zhang2020one}, our \texttt{IR-SCG} algorithm offers several key advantages. Notably, unlike the method of \citet{CaoEtAl2023}, \texttt{IR-SCG} is an anytime algorithm that does not require careful initialization, preset step sizes, or knowledge of the total number of iterations $T$ in advance. 

\begin{algorithm}[!b] \caption{Iteratively Regularized Stochastic Conditional Gradient (\texttt{IR-SCG}) Algorithm.} \label{alg:IR-SCG}
    \KwData{Parameters $\{\alpha_t\}_{t \geq 0} \subseteq [0,1]$,$ \{\sigma_t\}_{t\geq 0} \subseteq \mathbb{R}_{++}$.}
    \KwResult{sequences $\{x_t,z_t\}_{t \geq 1}$.}
    Initialize $x_0 \in X$\;
    \For{$t=0,1,2,\ldots$}{
    \If{$t=0$}{Compute 
        \begin{align*}
            \widehat{\grad F}_t := \grad_x f(x_t,\theta_t), \quad \widehat{\grad G}_t := \grad_x g(x_t,\xi_t)\\[-2em] 
        \end{align*}
    }
    \Else{Compute
        \begin{align*}
            \widehat{\grad F}_{t} := (1-\alpha_t)\widehat{\grad F}_{t-1} +\grad_x f(x_t,\theta_t)-(1-\alpha_t)\grad_x f(x_{t-1},\theta_t)\\
            \widehat{\grad G}_{t} := (1-\alpha_t)\widehat{\grad G}_{t-1} +\grad_x g(x_t,\xi_t)-(1-\alpha_t)\grad_x g(x_{t-1},\xi_t)\\[-2em] 
        \end{align*}}
    Compute
    \begin{align*}
    v_t &\in \argmin_{v \in X}\left\{\left(\sigma_t \widehat{\grad F}_t + \widehat{\grad G}_t\right)^\top v\right\}\\
    x_{t+1} &:= x_t+\alpha_t(v_t-x_t)\\
    S_{t+1} &:= (t+2)(t+1)\sigma_{t+1} + \sum_{i\in [t+1]}(i+1)i(\sigma_{i-1}-\sigma_i) \\
    z_{t+1} &:= \frac{(t+2)(t+1)\sigma_{t+1} x_{t+1}+ \sum_{i\in [t+1]}(i+1)i(\sigma_{i-1}-\sigma_i)x_i}{S_{t+1}}  
    \end{align*}
    \vspace{-10pt}
    }	
\end{algorithm}

To establish convergence rates for \cref{alg:IR-SCG}, we impose the following standard assumptions on the simple bilevel problem~\eqref{eq:stochastic-bilevel}. In the following, all norms refer to the Euclidean norm.
\begin{assumption}\label{ass:smooth-functions}
    The following hold.
    \begin{enumerate}[label=(\alph*), ref=\cref{ass:smooth-functions}(\alph*)]
        \item\label{ass:X-compact} $X \subseteq \mathbb{R}^d$ is convex and compact with diameter $D < \infty$, i.e., $\|x-y\|\leq D$ for any $x,y \in X$.
        
        \item \label{ass:g-f-convex} Functions $F,G$ are convex over $X$ and continuously differentiable on an open neighborhood of $X$.
        
        \item \label{ass:f-smooth} For any $\theta$, $f(\cdot,\theta)$ is $L_f$-smooth on an open neighborhood of $X$, i.e., it is continuously differentiable and its derivative is $L_f$-Lipschitz: $$\|\grad_x f(x,\theta) - \grad_x f(y,\theta)\| \leq L_f \|x-y\|,$$ 
        for any $x,y \in X$.
        \item \label{ass:g-smooth}  For any $\xi$, $g(\cdot,\xi)$ is $L_g$-smooth on an open neighborhood of $X$.
        \item \label{ass:sub-gaussian} For any $x \in X$,  the stochastic gradients noise is sub-Gaussian, i.e., there exists some $\sigma_f,\sigma_g > 0$ such that
        \begin{align*}
            \E\left[ \exp \left( \|\grad_x f(x,\theta) - \nabla F(x)\|^2/\sigma_f^2 \right) \right] &\leq \exp(1), \\
            \E\left[ \exp \left( \|\grad_x g(x,\xi) - \nabla G(x) \|^2 / \sigma_g^2 \right) \right] &\leq \exp(1).
        \end{align*}
    \end{enumerate}
\end{assumption}
First, we present the result for the case that $F$ is convex.
\begin{theorem}\label{theorem:Sconvergence-rate}
    Let $\{z_t\}_{t \geq 1}$ be the iterates generated by \cref{alg:IR-SCG} with $\alpha_t = 2/(t+2)$ for any $t \geq 0$, and regularization parameters $\sigma_t := \varsigma (t+1)^{-p}$ for some chosen $\varsigma > 0$, $p \in (0,1/2)$. Under \cref{ass:smooth-functions}, for any $t \geq 1$, with probability at least $1-\delta$, it (jointly) holds that
    \begin{align*}
        F(z_t)-F_{\opt} \leq O \left(t^{-(1/2-p)}\sqrt{\log\left({dt^2}/{\delta}\right)} \right), \quad
        G(z_t) - G_{\opt} &\leq O\left(t^{-p} \sqrt{\log\left({d}/{\delta}\right)}\right).
    \end{align*}
    Moreover, with probability $1$, we have
    \[\lim_{t \to \infty} F(x_t) = F_{\opt}, \quad \lim_{t \to \infty} G(x_t) = G_{\opt}.\]
\end{theorem}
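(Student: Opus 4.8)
The plan is to view \cref{alg:IR-SCG} as an inexact Frank--Wolfe scheme applied to the time-varying composite objective $H_t := \sigma_t F + G$, and to track its two error sources separately: the linearization/step error (controlled by smoothness) and the gradient-estimation error (controlled by the \texttt{STORM} recursion). Write $a_t := F(x_t) - F_{\opt}$, $b_t := G(x_t) - G_{\opt} \ge 0$, $\widehat{\grad H}_t := \sigma_t\widehat{\grad F}_t + \widehat{\grad G}_t$, and $e_t := \widehat{\grad H}_t - \grad H_t(x_t)$. Fix a bilevel-optimal $x^\ast$ (so $F(x^\ast)=F_{\opt}$ and $G(x^\ast)=G_{\opt}$). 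Since $H_t$ is $(\sigma_t L_f + L_g)$-smooth and $v_t$ minimizes $\langle\widehat{\grad H}_t,\cdot\rangle$ over $X$, the Frank--Wolfe estimate together with convexity of $H_t$ and $\norm{x^\ast - v_t}\le D$ yields the per-step inequality
\[
\sigma_t a_{t+1} + b_{t+1} \le (1-\alpha_t)(\sigma_t a_t + b_t) + \tfrac{(\sigma_t L_f + L_g)\alpha_t^2}{2}D^2 + \alpha_t D\norm{e_t},
\]
which isolates the estimation error as the single additive term $\alpha_t D\norm{e_t}$.

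Next I would bound $\norm{e_t}\le \sigma_t\norm{\widehat{\grad F}_t-\grad F(x_t)}+\norm{\widehat{\grad G}_t-\grad G(x_t)}$ with high probability. Unrolling the \texttt{STORM} recursion with $\alpha_t=2/(t+2)$ gives $\widehat{\grad G}_t-\grad G(x_t) = \frac{1}{(t+1)(t+2)}\sum_{s=1}^t (s+1)(s+2)\,d_s + \frac{2}{(t+1)(t+2)}e_0$, where each $d_s$ is a martingale difference whose bounded part has norm $O(1/s)$ (by $L_g$-smoothness and $\norm{x_s-x_{s-1}}\le\alpha_{s-1}D$) and whose stochastic part is $\alpha_s$ times a sub-Gaussian vector. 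A vector-valued martingale concentration inequality applied to $\sum_s (s+1)(s+2)d_s$, whose increments have scale $O(s)$, gives $\norm{\sum_s(s+1)(s+2)d_s}=O(t^{3/2}\sqrt{\log(d/\delta')})$, hence $\norm{\widehat{\grad G}_t-\grad G(x_t)}=O(t^{-1/2}\sqrt{\log(d/\delta')})$ and likewise for $F$; the dimension $d$ enters through the vector concentration.

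I would then multiply the per-step inequality by $(t+1)(t+2)$ (so $(1-\alpha_t)(t+1)(t+2)=t(t+1)$) and sum over $t$, handling the decreasing $\sigma_t$ by summation by parts. Setting $P_{t+1}:=(t+1)(t+2)(\sigma_t a_{t+1}+b_{t+1})$, the telescoped inequality reads $P_{T+1}+\sum_{t=1}^T t(t+1)(\sigma_{t-1}-\sigma_t)a_t \le P_1 + \sum_{t=1}^T R_t + \mathcal{E}_T$, with smoothness budget $\sum_t R_t = O(T)$ (as $\sum_t\sigma_t=O(T^{1-p})$) and accumulated noise $\mathcal{E}_T = 2\sum_t(t+1)\langle e_t, x^\ast-v_t\rangle = O(T^{3/2}\sqrt{\log})$. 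The sign structure then does the work, using $b_t\ge 0$ and $\abs{a_t}\le M:=\max_{x\in X}\abs{F(x)-F_{\opt}}<\infty$ (finite by compactness). For the inner bound, lower-bounding the indefinite terms by $-M$ times their nonnegative weights and using $\sum_t t(t+1)(\sigma_{t-1}-\sigma_t)=O(T^{2-p})$ gives $P_{T+1}=O(T^{2-p})$, whence $b_{T+1}\le P_{T+1}/[(T+1)(T+2)]+\sigma_T M = O(T^{-p})$; applying this per-iterate bound together with convexity of $G$ through the averaging weights transfers it to $z_{T+1}$. Here the $O(T^{3/2})$ noise is dominated by $O(T^{2-p})$ precisely because $p<1/2$, so the inner rate is governed by the deterministic $O(T^{-p})$ term. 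For the outer bound, I would observe that the weighted sum $\sum_{i=1}^{T+1}(i+1)i(\sigma_{i-1}-\sigma_i)a_i$ appearing in the convexity bound for $F(z_{T+1})$ is exactly the sum in the telescoped inequality---the algorithm's leading weight $\sigma_{t+1}$ is chosen so the $a_{T+1}$ terms cancel---yielding $S_{T+1}(F(z_{T+1})-F_{\opt})\le P_1+\sum_t R_t+\mathcal{E}_T-(T+1)(T+2)b_{T+1}\le O(T)+O(T^{3/2}\sqrt{\log})$; dividing by $S_{T+1}=\Theta(T^{2-p})$ gives $F(z_{T+1})-F_{\opt}=O(T^{-(1/2-p)}\sqrt{\log})$, this time noise-dominated.

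The main obstacle is the high-probability control of $\mathcal{E}_T$ and the bookkeeping of the logarithmic factors. Because $v_t$ is chosen using $\widehat{\grad H}_t$, the term $\langle e_t, x^\ast - v_t\rangle$ is correlated with $e_t$ and is not a clean martingale difference; I would therefore bound it by $D\norm{e_t}$ and invoke the per-step \texttt{STORM} estimates, sacrificing possible cancellation but unavoidably so. Establishing these bounds simultaneously for all $t\le T$---needed both for $\mathcal{E}_T$ and for the anytime guarantee---requires a time-uniform argument; allocating failure probability $\propto\delta/t^2$ across iterations (or a maximal supermartingale inequality) is what produces the $\sqrt{\log(dt^2/\delta)}$ factor in the outer rate, whereas the inner rate keeps only $\sqrt{\log(d/\delta)}$ because there the stochastic contribution is strictly lower order than the deterministic $O(t^{-p})$ term.
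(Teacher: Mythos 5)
Your proposal is correct and follows essentially the same route as the paper's proof: the identical per-step inexact Frank--Wolfe inequality for $\sigma_t F + G$, the same $t(t+1)$-weighted telescoping with summation by parts, per-iterate \texttt{STORM} concentration of order $t^{-1/2}\sqrt{\log(dt^2/\delta)}$ aggregated through a $\delta/t^2$ union-bound allocation, and the same sign arguments ($G(x_t)\ge G_{\opt}$ for the outer rate, boundedness of $F$ over the compact $X$ for the inner rate) combined with Jensen's inequality on the averaged iterates $z_t$, whose weights are designed to match the telescoped sum. The only differences are cosmetic: you re-derive the \texttt{STORM} concentration by explicitly unrolling the recursion where the paper invokes \citep[Lemma~4.1]{CaoEtAl2023}, and you carry a slightly sharper single-$D$ constant in the error term.
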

\cref{theorem:Sconvergence-rate} provides the formal convergence guarantees for the \texttt{IR-SCG} algorithm in the convex setting. If we set $p=1/4$, \cref{alg:IR-SCG} achieves an $\epsilon$-level optimality gap for both the outer and inner problems with a sample complexity of $O(1/\epsilon^4)$.

We next extend our analysis to the more general case where $F$ is not necessarily convex. In this setting, we require a different measure for \emph{stationarity}. We define our stationarity measure using the Frank-Wolfe gap, which for a convex function $\CF(x) = 0$ if and only if $x$ is optimal. This provides a natural generalization for the non-convex case. Accordingly, for all $x \in X$, we define the functions
\begin{align}
    \label{eq:auxiliary}
    \CF(x):= \max_{v \in X_{\opt}} \nabla F(x)^\top (x-v), \quad \CG(x):= G(x)-G_{\opt}.
\end{align}
To avoid clutter, we present the result using the optimally tuned parameters of \cref{alg:IR-SCG}.
\begin{theorem}\label{theorem:nonconvex-stochastic-convergence}
    Let $\{x_t\}_{t \geq 0}$ denote the iterates generated by \cref{alg:IR-SCG} with stepsizes $\alpha_t = (t\!+\!1)^{-\omega}$ and regularization parameters $\sigma_t = \varsigma(t\!+\!1)^{-p}$ for any $t \!\geq\! 0$, with $p \!=\! \frac{2}{7}$ and $\omega \!=\! \frac{6}{7}$.
    If Assumption~\ref{ass:smooth-functions} holds with the exception that $F$ may be nonconvex, then with probability at least $1-\delta$, for every $t \geq 0$, it (jointly) holds that
    \begin{align*}
        \frac{\sum_{i = 0}^{t}\sigma_i\CF(x_i)}{\sum_{i = 0}^{t} \sigma_i} 
        \leq O \left(t^{-\frac{1}{7}} \log \left({dt^2}/{\delta} \right) \right), \quad 
        \CG(x_t) \leq O \left(t^{-\frac{1}{7}} \log \left({dt^2}/{\delta} \right) \right).
    \end{align*}
    Moreover, with probability $1$, we have
    \[\liminf_{t \to \infty} \CF(x_t) = 0, \quad \lim_{t \to \infty} \cG(x_t) = 0.\]
\end{theorem}
\section{Algorithm and convergence results for the finite-sum setting}
\label{sec:finite-sum}
In this section, we address the case where the expectations in \eqref{eq:stochastic-bilevel} are finite sums over $n$ components. We introduce the Iteratively Regularized Finite-Sum Conditional Gradient (\texttt{IR-FSCG}) method, summarized in~\cref{alg:IR-FSCG}, where $[n]:=\{1, \dots, n\}$. This approach is a conditional gradient method that leverages the \texttt{SPIDER} estimator, achieving variance reduction through periodic gradient recomputations with mini-batches of size $q$. While the \texttt{SPIDER} estimator has been previously integrated into projection-free methods~\citep{shen2019complexities, yurtsever2019conditional, zhang2020quantized, hassani2020stochastic}, here we show how it can be effectively adapted to our regularized setting. 
The primary advantage is that, unlike the method of \citet{CaoEtAl2023}, \texttt{IR-FSCG} preserves the desirable anytime properties of our approach while achieving a faster convergence rate that capitalizes on the finite-sum structure. Additionally, it does not require careful initialization, preset step sizes, or knowledge of the total number of iterations $T$ in advance. 

To establish its convergence, we again rely on \cref{ass:smooth-functions}, noting that condition \cref{ass:smooth-functions}(e) holds automatically in this finite-sum setting.
First, we present the result when $F$ is convex.

\begin{algorithm}[!b] \caption{Iteratively Regularized Finite-Sum Conditional Gradient (\texttt{IR-FSCG}) Algorithm.} \label{alg:IR-FSCG}
	\KwData{Parameters $\{\alpha_t\}_{t \geq 0} \subseteq [0,1]$, $\{\sigma_t\}_{t\geq 0} \subseteq \mathbb{R}_{++}$, $S, q \in [n]$.}
	\KwResult{sequences $\{x_t,z_t\}_{t \geq 1}$.}
        Initialize $x_0 \in X$\;
        \For{$t=0,1,2,\ldots$}{
        \If{$t = 0 \mod q$}{Compute 
        \begin{align*}
            \widehat{\grad F}_t := \grad F(x_t), \quad \widehat{\grad G}_t := \grad G(x_t).
        \end{align*}
        }
        \Else{Draw $S$ new i.i.d. samples $\cS_f = \{\theta_1,\ldots,\theta_S\}$, $\cS_g = \{\xi_1,\ldots,\xi_S\}$\;
        Compute
        \begin{align*}
            \widehat{\grad F}_{t} := \widehat{\grad F}_{t-1} +\grad f_{\mathcal{S}_f}(x_t)-\grad f_{\mathcal{S}_f}(x_{t-1})\\
            \widehat{\grad G}_{t} := \widehat{\grad G}_{t-1} +\grad g_{\mathcal{S}_g}(x_t)-\grad g_{\mathcal{S}_g}(x_{t-1}).
        \end{align*}
        }
        Compute
        \begin{align*}
        v_t &\in \argmin_{v \in X}\left\{\left(\sigma_t \widehat{\grad F}_t + \widehat{\grad G}_t\right)^\top v\right\}\\
        x_{t+1} &:= x_t+\alpha_t(v_t-x_t)
        \end{align*}
        \If{$t \geq q$}{Compute
        \begin{align*}
        S_{t+1} &:= (t+2)(t+1)\sigma_{t+1} + \sum_{i\in [t+1]\setminus [q]}(i+1)i(\sigma_{i-1}-\sigma_i) \\
        z_{t+1} &:= \frac{(t+2)(t+1)\sigma_{t+1} x_{t+1}+ \sum_{i\in [t+1] \setminus [q]}(i+1)i(\sigma_{i-1}-\sigma_i)x_i}{S_{t+1}}.
        \end{align*}}
        }	
\end{algorithm}

\begin{theorem}\label{theorem:FSconvergence-rate}
    Suppose $F$ and $G$ in \cref{eq:stochastic-bilevel} are expectations over uniform distributions on $[n]$. Let $\{z_t\}_{t \geq 1}$ be the iterates generated by \cref{alg:IR-FSCG} with $\alpha_t = \log(q)/q$ for $0 \leq t < q$, $\alpha_t = 2/(t+2)$ for any $t \geq q$, and $\sigma_t := \varsigma (\max\{t,q\}+1)^{-p}$ for some chosen $\varsigma > 0$, $p \in (0,1)$ and $S=q$. Under \cref{ass:smooth-functions}, with probability at least $1-\delta$, for any $t \geq q$, it holds that
    \begin{align*}
        F(z_t)-F_{\opt} \leq  O\left(c(t,q) \, t^{-(1-p)}\sqrt{\log\left({t^2}/{\delta}\right)} \right), \quad G(z_t) - G_{\opt} \leq O\left(t^{-p} \sqrt{\log\left({1}/{\delta}\right)}\right),
    \end{align*}
    where $c(t,q) = \max\left\{1,{q\log(q)}/{t}\right\}$.
    Moreover, with probability $1$, we have
    \[\lim_{t \to \infty} F(x_t) = F_{\opt}, \quad \lim_{t \to \infty} G(x_t) = G_{\opt}.\]
\end{theorem}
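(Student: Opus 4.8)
The plan is to couple a Frank--Wolfe descent analysis of the time-varying regularized objective $\Phi_t := \sigma_t F + G$ with a variance-reduction analysis of the \texttt{SPIDER} estimator, and then use the precise weighting in the definition of $z_t$ to telescope the resulting one-step inequalities. The first ingredient is control of the gradient errors $e_t^F := \widehat{\grad F}_t - \grad F(x_t)$ and $e_t^G := \widehat{\grad G}_t - \grad G(x_t)$. Since full gradients are recomputed every $q$ iterations, within each epoch the \texttt{SPIDER} recursion makes $e_t^F, e_t^G$ martingales whose increments are, conditionally, zero-mean with norm at most $L_f\|x_i-x_{i-1}\|/\sqrt{S} \le 2L_fD/((i+1)\sqrt S)$, using $\|x_i-x_{i-1}\| = \alpha_{i-1}\|v_{i-1}-x_{i-1}\| \le \alpha_{i-1}D$. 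Because the finite-sum structure makes these increments \emph{bounded}, a dimension-free vector Azuma/Pinelis inequality with $S=q$ yields, for $t\ge q$, a high-probability bound $\|e_t^F\|,\|e_t^G\| = O(t^{-1}\sqrt{\log(1/\delta)})$ (the epoch start satisfies $t_0 > t/2$, absorbing the epoch length). This $O(1/t)$ decay is the crux that upgrades the naive constant-variance estimator.

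Next I would combine $L_t := \sigma_t L_f + L_g$-smoothness of $\Phi_t$ with optimality of $v_t$ for the estimated gradient and convexity. Writing $\Delta_t := F(x_t)-F_{\opt}$, $\Gamma_t := G(x_t)-G_{\opt}\ge 0$, $\epsilon_t \le \sigma_t\|e_t^F\| + \|e_t^G\|$, and taking the bilevel optimum $x^\star$ (so $\Phi_t(x^\star) = \sigma_t F_{\opt}+G_{\opt}$), the standard conditional-gradient bound gives
\begin{align*}
\sigma_t\Delta_{t+1} + \Gamma_{t+1} \le (1-\alpha_t)(\sigma_t\Delta_t + \Gamma_t) + 2\alpha_t D\,\epsilon_t + \tfrac12\alpha_t^2 L_t D^2 .
\end{align*}
Multiplying by $(t+1)(t+2)$ and using $\alpha_t=2/(t+2)$ turns the contraction into $(t+1)t$, so $B_t := (t+1)t(\sigma_t\Delta_t+\Gamma_t)$ obeys $B_{t+1}\le B_t + (t+1)(t+2)(\sigma_{t+1}-\sigma_t)\Delta_{t+1} + 4(t+1)D\epsilon_t + 2L_tD^2$. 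Summing from $q$ to $t-1$, the $(\sigma_{i-1}-\sigma_i)$-weighted $F$-terms produced are \emph{exactly} the averaging weights $(i+1)i(\sigma_{i-1}-\sigma_i)$ of $z_t$; Jensen's inequality then bounds $S_t(F(z_t)-F_{\opt})$ by that same weighted sum, and dropping the nonnegative $\Gamma_t$ head term gives
\begin{align*}
F(z_t) - F_{\opt} \le \frac{B_q + 4D\sum_{i=q}^{t-1}(i+1)\epsilon_i + 2D^2\sum_{i=q}^{t-1}L_i}{S_t} .
\end{align*}
With $\sigma_t = \varsigma(\max\{t,q\}+1)^{-p}$ one verifies $S_t = \Theta(t^{2-p})$, $\sum_i(i+1)\epsilon_i = O(t\sqrt{\log(t^2/\delta)})$, and $\sum_i L_i = O(t)$, producing the $O(t^{-(1-p)}\sqrt{\log(t^2/\delta)})$ rate; the warm-up term $B_q$ contributes $O(q\log(q)\,t^{-(2-p)})$, which is the source of the $\max\{1,q\log(q)/t\}$ factor.

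For the inner bound I would exploit that $F$ is bounded below on the compact set $X$, so $\Delta_i \ge \min_X F - F_{\opt} =: -C_F$. Applying the telescoped inequality up to each index $i$ and replacing the weighted $F$-sum by this lower bound gives $i(i+1)\Gamma_i \le \mathrm{RHS}_i + S_i C_F$, whence $\Gamma_i = O(i^{-p}\sqrt{\log(1/\delta)})$ for every $i$ (the deterministic term $S_iC_F/i^2 \sim C_F i^{-p}$ dominates the lower-order noise because $p<1$, which is precisely why the $G$-rate is $d$-free and noise-free to leading order). Averaging these per-iterate bounds through $z_t$ by Jensen, and using $\sum_i w_i i^{-p} = \Theta(t^{2-2p}) = \Theta(S_t\,t^{-p})$, yields $G(z_t)-G_{\opt} = O(t^{-p}\sqrt{\log(1/\delta)})$. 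It remains to analyze the first $q$ iterations, where $\sigma_t\equiv\sigma_q$ and $\alpha_t = \log(q)/q$ is constant: a geometric contraction $(1-\alpha_t)^q \approx 1/q$ plus the constant-step curvature $O(\log(q)L D^2/q)$ gives $\Psi_q := \sigma_q\Delta_q+\Gamma_q = O(\log(q)/q)$, hence $B_q = (q+1)q\Psi_q = O(q\log q)$; a union bound over iterations then converts the pointwise \texttt{SPIDER} estimates into the stated high-probability guarantees.

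The main obstacle is the interaction between the first step and the $F$/$G$ coupling: obtaining the $O(1/t)$, dimension-free, high-probability \texttt{SPIDER} error uniformly across epochs and then propagating it through the regularization-weighted telescoping, all while correctly tracking that $\Delta_t$ may be \emph{negative} and is only bounded below. This sign issue is exactly what both forces the averaging for the outer bound and makes the leading term of the inner bound deterministic and independent of $d$.
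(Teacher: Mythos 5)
Your proposal is correct and follows essentially the same route as the paper's own proof: the Frank--Wolfe one-step inequality for $\Phi_t = \sigma_t F + G$ (\cref{lemma:recursive-rule}), dimension-free Pinelis-type martingale concentration giving the $O(1/t)$ post-warm-up and $O(\log(q)/\sqrt{qS})$ warm-up \texttt{SPIDER} errors (\cref{lemma:FSestimator}), the $t(t+1)$-weighted telescoping whose $(i+1)i(\sigma_{i-1}-\sigma_i)$ weights coincide exactly with the averaging in $z_t$ (\cref{lemma: common bound,prop:bounds}), Jensen together with the sign arguments $G(x_t) \geq G_{\opt}$ and $F(x_i)-F_{\opt} \geq -\left(F_{\opt}-\min_{x \in X}F(x)\right)$, and a separate geometric-contraction analysis of the first $q$ constant-stepsize iterations (\cref{lemma:initial-phase}). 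Even your finer points --- the union bound at levels $\delta/(i(i+1))$, the epoch-start observation $s_t \geq t/2$, the $B_q = O(q\log q)$ warm-up contribution that produces the $\max\left\{1, q\log(q)/t\right\}$ factor, and absorbing $\log t$ via the boundedness of $\log(t)/t^{1-p}$ to obtain the $\sqrt{\log(1/\delta)}$-only inner rate --- mirror \cref{prop:FSerror,lemma:FSlastiterate,lemma:FStwoupperbounds,lemma:FSexample-of-lambda}.
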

\cref{theorem:FSconvergence-rate} provides the formal convergence guarantees for the \texttt{IR-FSCG} algorithm in the convex setting. Moreover, when we set $S = q = \lfloor \sqrt{n} \rfloor$ and $p=1/2$, \cref{alg:IR-FSCG} achieves an $\epsilon$-level optimality gap for both the outer and inner problems with a sample complexity of $O(\sqrt{n}/\epsilon^2)$. Besides, \cref{alg:IR-FSCG} can be readily extended when the outer- and inner-level problems involve different numbers of component functions ($n_f \neq n_g$) by introducing $(S_f, q_f)$ for the outer level and $(S_g, q_g)$ for the inner level. Details are omitted for brevity.

We next extend our analysis to the more general case where $F$ is not necessarily convex, using the auxiliary function~\eqref{eq:auxiliary} to assess stationarity. To avoid clutter, we present the result using the optimally tuned parameters of \cref{alg:IR-FSCG}.

\begin{theorem}\label{theorem:nonconvex-finite-sum-convergence}
    Let $\{x_t\}_{t \geq 0}$ denote the iterates generated by \cref{alg:IR-FSCG} with stepsizes $\alpha_t = \log(q+1)/(q+1)$ for any $ 0\leq t \leq q$, $\alpha_t = 1/(t+1)^\omega$ for any $t > q$ and $S=q$, and regularization parameters $\sigma_t = \varsigma(\max\{t,q+1\}+1)^{-p}$ with $p = \frac{1}{2}$ and $w = \frac{3}{4}$. If \cref{ass:smooth-functions} holds with the exception that $F$ may be nonconvex, then with probability at least $1-\delta$, for every $t \geq q$, it (jointly) holds that
    \begin{align*}
        &\frac{\sum_{i = 0}^{t} \beta_i\CF(x_i)}{\sum_{i = 0}^{t} \beta_i}
        \leq O \left(t^{-\frac{1}{4}} \log \left({dt^2}/{\delta} \right) \right), \quad 
        \CG(x_t) \leq O \left(t^{-\frac{1}{4}} \log \left({dt^2}/{\delta} \right) \right),
    \end{align*}
    where
    \begin{align}
        \label{eq:beta}
        \beta_i 
        = \begin{cases}
            \displaystyle \frac{\sigma_0 (q+1)^\omega \log(q+1)}{q+1} \left( 1 - \frac{\log(q+1)}{q+1} \right)^{q-i} & 0 \leq i \leq q, \\ 
            \sigma_i & i > q.
        \end{cases}
    \end{align}
    Moreover, with probability $1$, we have
    \[\liminf_{t \to \infty} \CF(x_t) = 0, \quad \lim_{t \to \infty} \cG(x_t) = 0.\]
\end{theorem}

\section{Numerical results} \label{sec:experiments}
We showcase the performance of our proposed algorithms in an over-parametrized regression and a dictionary learning problems. For performance comparison, we implement four algorithms: \texttt{SBCGI} \citep[Algorithm 1]{CaoEtAl2023}, \texttt{SBCGF} \citep[Algorithm 2]{CaoEtAl2023}, \texttt{aR-IP-SeG} \citep{JalilzadehEtAl2024}, and the stochastic variant of the dynamic barrier gradient descent (\texttt{SDBGD}) \citep{GongEtAl2021}. All implementation details follow those in the corresponding papers and available in \cref{appendix:additional}. To ensure reproducibility, all
source codes are made available at \url{https://github.com/brucegiang/CG-StoBilvl}.

\subsection{Over-parameterized regression}
We first consider a simple bilevel optimization problem with a convex outer-level objective function
\begin{align}\label{eq:over-param}
    \begin{array}{cl}
        \displaystyle \min_{x \in \bR^d}  \displaystyle F(x):= \frac{1}{|\mathcal{D}_{\text{val}}|} \|A_{\text{val}} x - b_{\text{val}} \|_2^2\quad 
        \text{s.t.} \quad  \displaystyle x \in \argmin_{\|z\|_1 \leq \delta} G(z):= \frac{1}{|\mathcal{D}_{\text{tr}}|} \|A_{\text{tr}}z-b_{\text{tr}}\|_2^2,
    \end{array}
\end{align} 
where the goal is to minimize the validation loss by choosing among optimal solutions of the training loss constrained by the $\ell_1$-norm ball. 
We use the same training and validation datasets, $(A_{\text{tr}}, b_{\text{tr}})$ and $(A_{\text{val}}, b_{\text{val}})$, from the Wikipedia Math Essential dataset \citep{RozemberczkiEtAl2021}, as in \citep{CaoEtAl2023}. This dataset consists of $n = 1068$ samples and $d = 730$ features.

We evaluate the performance of different algorithms across $10$ experiments with random initializations and report their average performance within a $4$-minute execution limit. \Cref{fig:Experiment 1} shows the convergence behavior of different algorithms for the inner-level (left) and outer-level (right) optimality gaps in terms of execution time. Our proposed stochastic methods outperform existing approaches. In particular, \texttt{IR-FSCG} achieves high-accuracy solutions for the outer-level objective while maintaining strong inner-level feasibility, surpassing its mini-batch counterpart, \texttt{SBCGF}. Similarly, \texttt{IR-SCG} outperforms its single-sample counterpart, \texttt{SBCGI}.
We also observe that both \texttt{SBCGI} and \texttt{SBCGF} exhibit degradation in their inner-level optimality gaps over time. This behavior aligns with the theoretical guarantees in \citep{CaoEtAl2023}, where the inner-level convergence can only be bounded by $\epsilon_g/2$ without monotonic improvement guarantees. Furthermore, neither \texttt{aR-IP-SeG} nor \texttt{SDBGD} makes significant progress in optimizing the outer-level objective function. 

The table below also reports the average number of iterations and stochastic oracle calls over the 4-minute time limit for each algorithm. Our projection-free method completes more iterations within the fixed time budget, benefiting from a simpler linear minimization oracle compared to the projection-free approach in~\citep{CaoEtAl2023}. 
We also observe that \texttt{SDBGD} and \texttt{IR-SCG} perform a similar number of iterations, as the $\ell_1$-norm projection oracle has comparable computational cost to the linear minimization oracle.
\begin{table}[h]
    \centering
    \begin{tabular}{||c || r|| r||}
    \hline
        Method & Average \# of iterations & Average \# of oracle calls \\
        \hline
        \hline
        \texttt{IR-SCG}  & $1182617.0 $  & $2365234.0 $\\
        \texttt{IR-FSCG}  & $369952.5$ & $28240425.0$ \\
        \texttt{SBCGI}  & $6726.9$ & $7428681.9$ \\
        \texttt{SBCGF} & $6713.4 $     & $7935076.2$\\
        \texttt{aR-IP-SeG} (short step)   & $807700.2 $ & $3230800.8 $\\ 
        \texttt{aR-IP-SeG} (long step) &  $799725.0$ & $3199300.0$\\
        \texttt{SDBGD} (short step)   & $1250392.6 $ & $2500782.2 $\\ 
        \texttt{SDBGD} (long step) & $1243720.1 $ & $2487440.2 $\\
        \hline
    \end{tabular}
\end{table}

\begin{figure}[!tb]
    \centering
    \begin{subfigure}[b]{0.49\textwidth}
        \centering
        \includegraphics[width=\linewidth]{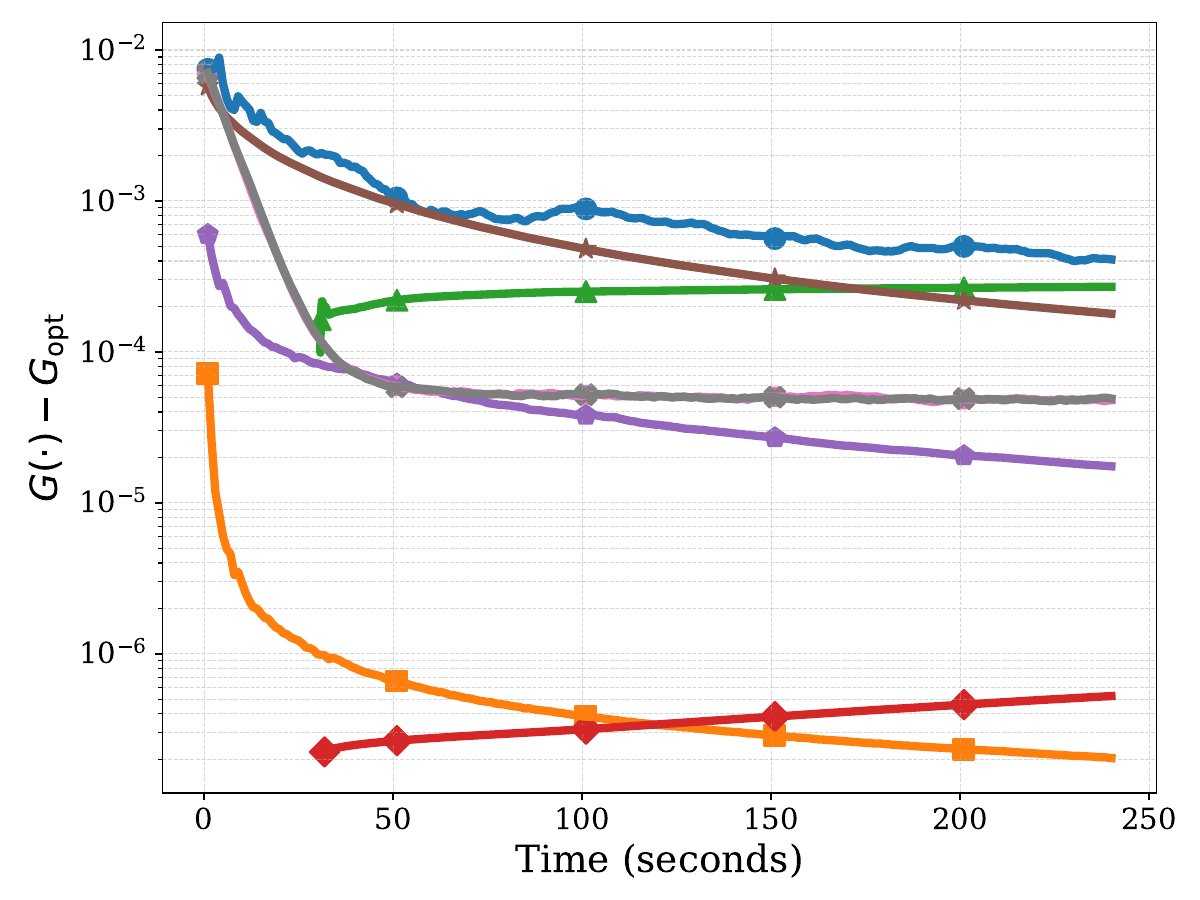}
    \end{subfigure}
    \hfill
    \begin{subfigure}[b]{0.49\textwidth}
        \centering
        \includegraphics[width=\linewidth]{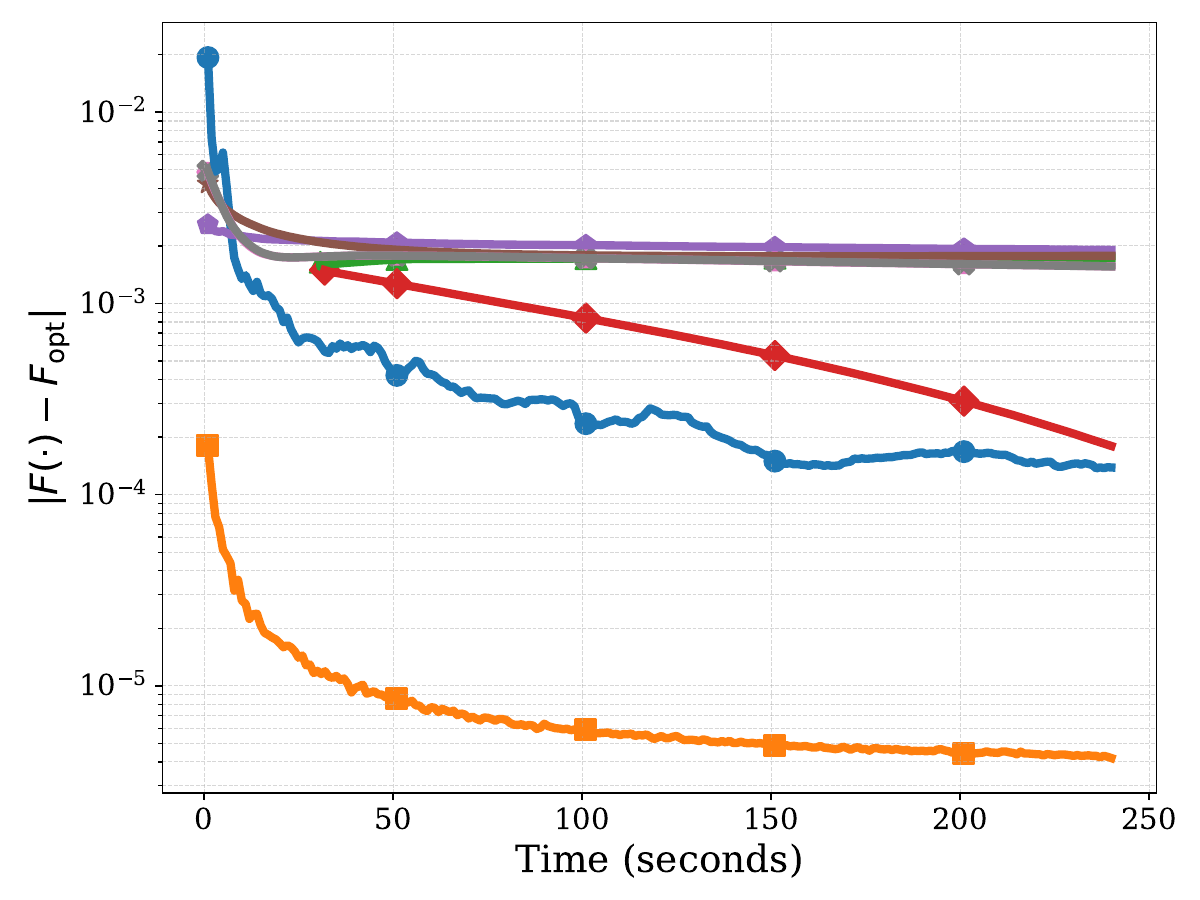}
    \end{subfigure}
    
    \begin{subfigure}[b]{\textwidth}
        \centering
        \includegraphics[width=\linewidth]{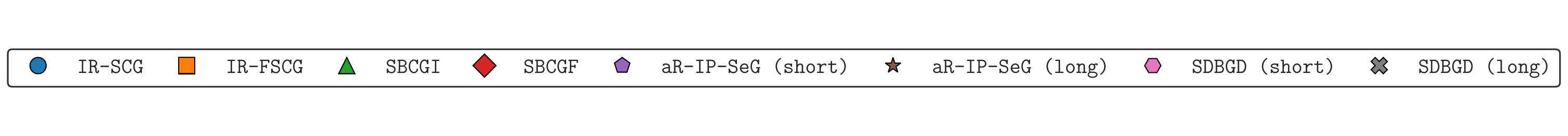}
    \end{subfigure}
    \caption{The inner-level optimality gap (left) and the outer-level absolute optimality gap (right) over time for different algorithms on the over-parametrized regression problem~\cref{eq:over-param}. }
    \label{fig:Experiment 1}
\end{figure}

\subsection{Dictionary learning}
In the dictionary learning problem, the goal is to learn a compact representation of the input data $A = \{a_1, \dots,a_n\} \subseteq \bR^m$. Formally, we aim to find a dictionary $D = [d_1 \cdots d_p] \in \bR^{m \times p}$ such that each data point $a_i$ can be approximated by a linear combination of the basis vectors in $D$. This leads to the following optimization problem
\begin{align*}
    \begin{aligned}
      \min_{D,X} &\quad \frac{1}{2n} \sum_{i \in [n]} \|a_i-Dx_i\|_2^2\\
      \text{s.t.} ~ & \quad D \in \bR^{m \times p}, \ X = [x_1 \cdots x_n] \in \bR^{p \times n} \\
      & \quad \|d_j\|_2 \leq 1, \quad \forall j \in [p], \quad \|x_i\|_1 \leq \delta, \quad \forall i \in [n],
    \end{aligned}
\end{align*}
where we refer to $X$ as the coefficient matrix. 
In practice, data points
usually arrive sequentially and the representation evolves gradually. Hence, the dictionary must be updated sequentially as well. 
Assume that we already have learned a dictionary $\hat{D} \in \bR^{m \times p}$ and the
corresponding coefficient matrix $\hat{X} \in \bR^{p \times n}$ for some data set $A$. As a new dataset $A' =\{a_1', \dots, a_{n'}'\} \subseteq \bR^{m}$ arrives, we intend to enrich our dictionary by learning $\Tilde{D} \in \bR^{m \times q}$ ($q > p$)  and the coefficient matrix $\Tilde{X} \in \bR^{q \times n'}$ for the new dataset while maintaining good performance of $\Tilde{D}$ on the old dataset $A$ as
well as the learned coefficient matrix $\hat{X}$. 

This leads to the following simple bilevel problem with a nonconvex outer-level objective function 
\begin{equation} \label{eq:bilevel-dictionary}
    \begin{aligned}
         \min_{\Tilde{D}, \Tilde{X}} &\quad F(\Tilde{D}, \Tilde{X}):=\frac{1}{2n'} \sum_{i \in [n']} \|a_i'-\Tilde{D}\Tilde{x}_i\|_2^2\\
         \text{s.t.} ~ & \quad  \Tilde{D} \in \bR^{m \times q}, \ \Tilde{X}= [\Tilde{x}_1 \cdots \Tilde{x}_n] \in \bR^{q \times n'}, \quad \|\Tilde{x}_i\|_1 \leq \delta, \quad \forall i \in [n'],\\
         &\quad \Tilde{D} \in \argmin_{\|z_j\|_2 \leq 1, \forall j \in [q]} g(Z):= \frac{1}{2n}\sum_{i \in [n]} \|a_i-Z\hat{x}_i\|_2^2,
    \end{aligned}
\end{equation}
where we denote $\hat{x}_i$ as the prolonged vector in $\bR^q$ by appending zeros at the end. We consider problem~\eqref{eq:bilevel-dictionary} on a synthetic dataset with a similar setup to \citep{JiangEtAl2023, CaoEtAl2023}.

We evaluate the performance of different algorithms across $10$ experiments with random initializations and report their average performance within a $1$-minute execution limit. 
\Cref{fig:Experiment 4} shows the convergence behavior of different algorithms for the inner-level (left) and outer-level (right) problems in terms of execution time. Our proposed stochastic methods outperform existing approaches. In particular, \texttt{IR-FSCG} achieves better solutions for the outer-level objective while maintaining strong inner-level feasibility, surpassing its mini-batch counterpart, \texttt{SBCGI}. Similarly, \texttt{IR-SCG} outperforms its single-sample counterpart, \texttt{SBCGI}. We again observe that both \texttt{SBCGI} and \texttt{SBCGF} exhibit degradation in their inner-level optimality gaps over time. 

The table below reports the average number of iterations and stochastic oracle calls over the $1$-minute time limit for each algorithm. Our projection-free method completes more iterations within the fixed time budget, benefiting from a simpler linear minimization oracle compared to the projection-free methods in~\citep{CaoEtAl2023}. 
We also observe that \texttt{SDBGD} performs fewer iterations than \texttt{IR-SCG}, as it relies on a more costly projection oracle. 

\begin{figure}[!tb]
    \centering
    \begin{subfigure}[b]{0.49\textwidth}
        \centering
        \includegraphics[width=\linewidth]{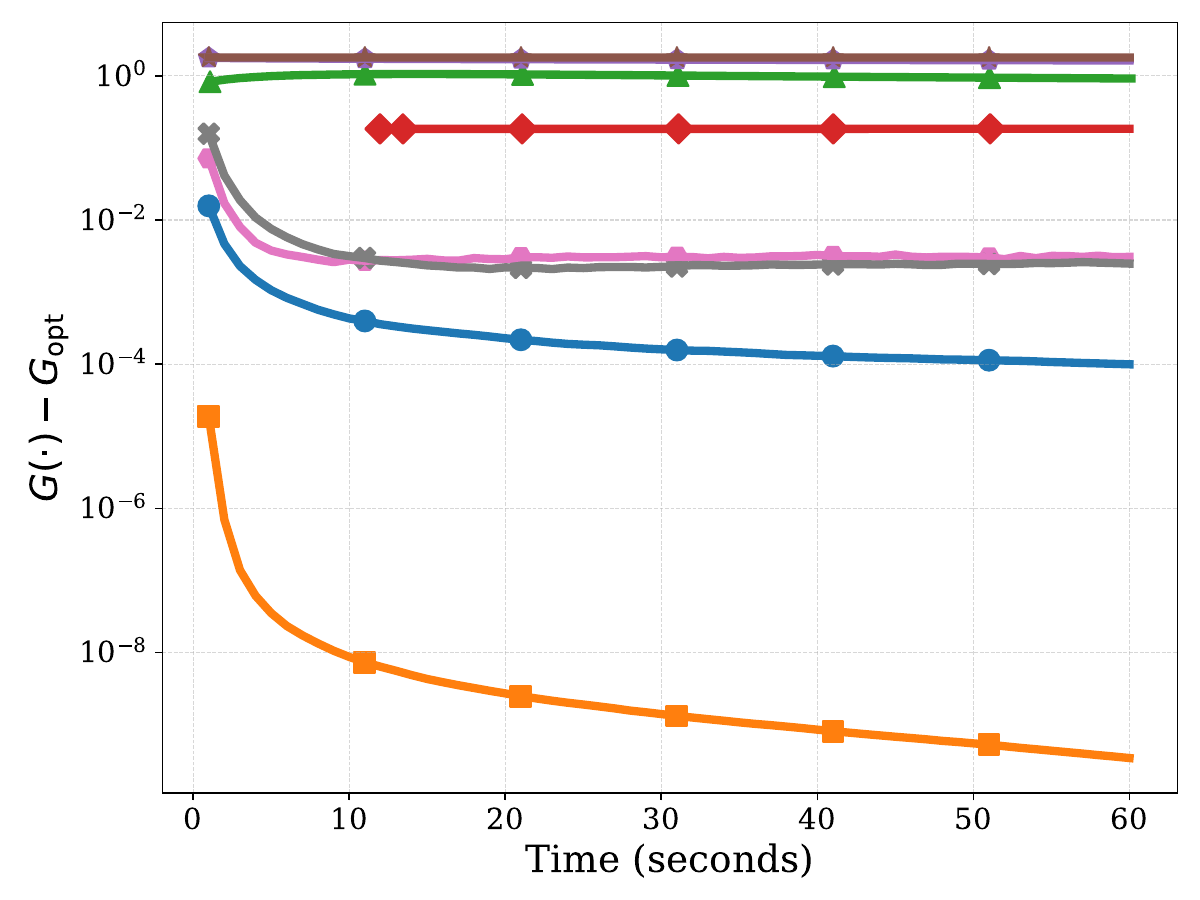}
    \end{subfigure}
    \hfill
    \begin{subfigure}[b]{0.49\textwidth}
        \centering
        \includegraphics[width=\linewidth]{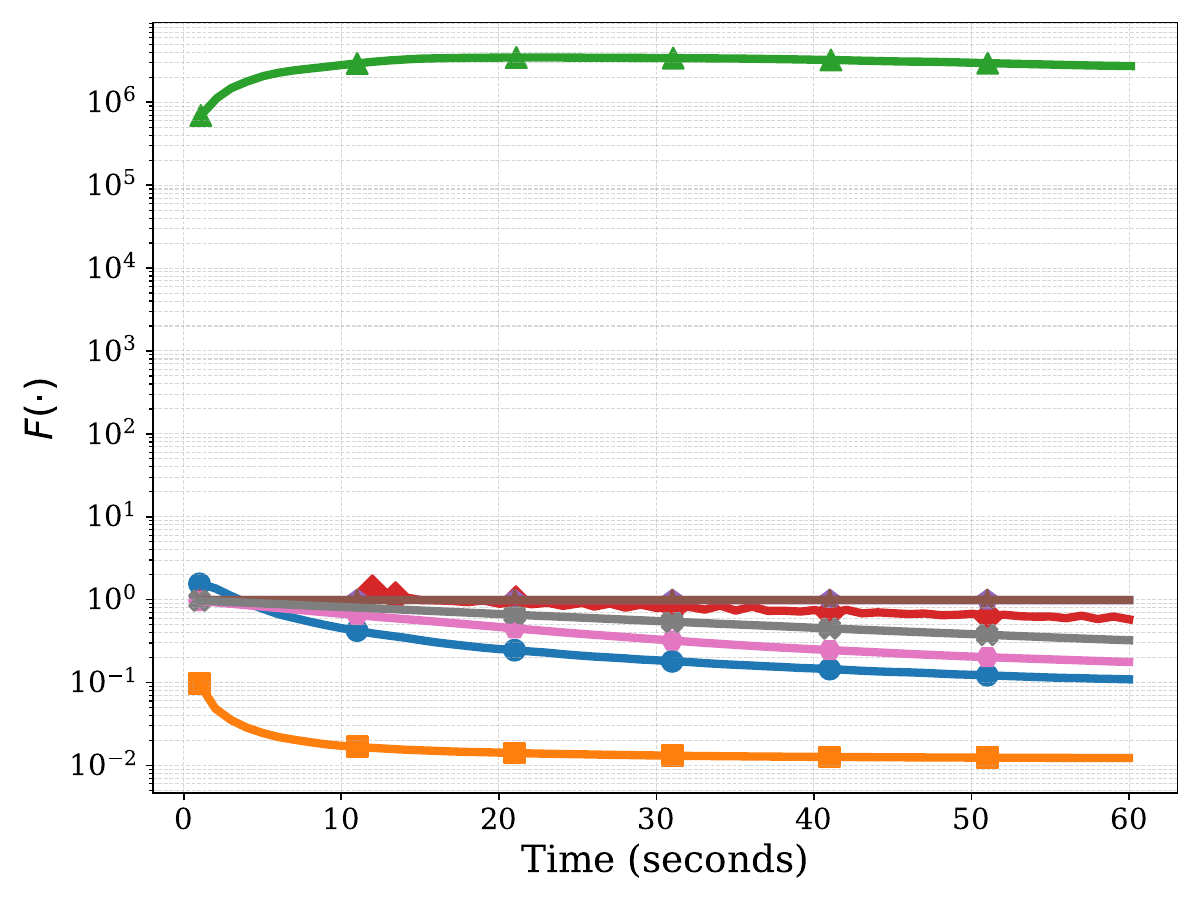}
    \end{subfigure}
    
    \begin{subfigure}[b]{\textwidth}
        \centering
        \includegraphics[width=\linewidth]{methods_legend.pdf}
    \end{subfigure}
    \caption{The inner-level optimality gap (left) and the outer-level objective function (right) over time for different algorithms on the dictionary learning problem \eqref{eq:bilevel-dictionary}. }
    \label{fig:Experiment 4}
\end{figure}

\label{sec:dictionary}
\begin{table}[!h]
    \centering
    \begin{tabular}{||c || r|| r||}
    \hline
        Method & Average \# of iterations & Average \# of oracle calls \\
        \hline
        \hline
        \texttt{IR-SCG}  & $32736.1$ & $65472.2$\\
        \texttt{IR-FSCG}  & $30532.6$ & $1872851.0$ \\
        \texttt{SBCGI}  & $466.3$ & $7440.6$ \\
        \texttt{SBCGF} & $361.84 $     & $6147916.0$\\
        \texttt{aR-IP-SeG} (short step)   & $6290.0 $ & $25160.0 $\\ 
        \texttt{aR-IP-SeG} (long step) &  $6278.1$ & $25112.4$\\
        \texttt{SDBGD} (short step)   & $11441.3 $ & $22882.6$\\ 
        \texttt{SDBGD} (long step) & $12239.3 $ & $24478.6$\\
        \hline
    \end{tabular}
\end{table}

\bibliographystyle{myabbrvnat}

\appendix
\newcommand{\appendixnumberline}[1]{Appendix\space}

\renewcommand{\appendixname}{Appendix}
\renewcommand{\thesection}{\Alph{section}}
\renewcommand{\thesubsection}{\Alph{section}.\arabic{subsection}}

\section{Preparatory Lemmas}
\label{appendix:preparatory}
Throughout all appendices, we employ the notation $\Phi_t(x) := \sigma_t F(x) + G(x)$ and $\widehat{\grad \Phi}_t := \sigma_t \widehat{\grad F}_t + \widehat{\grad G}_t$ to denote the gradient estimate at step $t$. We also denote $x_{\opt} \in X_{\opt}$ as an optimal solution to \eqref{eq:stochastic-bilevel}. All norms refer to the Euclidean norm. 

For any sequence $\{x_t\}_{t \geq 0}$ generated by either \cref{alg:IR-SCG} or \cref{alg:IR-FSCG}, we observe that by construction, $x_0, v_{t} \in X$ . Hence, under convexity of $X$, $x_{t+1} \in X$ as it is a convex combination of $x_t$ and $v_t$, and $X$ is assumed to be convex. By a simple induction argument, one can easily show that the sequence $\{x_t\}_{t \geq 0} \in X$.
\subsection{Convex outer-level}
\begin{lemma} \label{lemma:recursive-rule}
    Let $\{x_t\}_{t \geq 0}$ denote the iterates generated by \cref{alg:IR-SCG} or \cref{alg:IR-FSCG} with some given stepsizes $\{\alpha_t\}_{t\geq 0}$. If \cref{ass:smooth-functions} holds, then for every $t \geq k$, we~have
    \begin{equation*}
        \Phi_t(x_{t+1})-\Phi_t(x_{\opt}) \!\leq\! (1-\alpha_t)(\Phi_t(x_t)-\Phi_t(x_{\opt}))+\frac{(L_f \sigma_t \!+\! L_g)D^2 \alpha_t^2}{2} + 2 \alpha_t D \left\| \nabla \Phi_t(x_t) \!-\!\widehat{\grad \Phi}_t \right\|, 
    \end{equation*}
\end{lemma}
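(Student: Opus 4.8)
This is a standard Frank-Wolfe/conditional gradient descent lemma, adapted to the regularized objective $\Phi_t = \sigma_t F + G$ with inexact (estimated) gradients. The goal is a one-step progress bound showing the suboptimality gap $\Phi_t(x_{t+1}) - \Phi_t(x_{\opt})$ contracts by factor $(1-\alpha_t)$ plus two error terms: a curvature/smoothness term $O(\alpha_t^2)$ and a gradient-error term $O(\alpha_t \| \nabla\Phi_t(x_t) - \widehat{\grad\Phi}_t\|)$.

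Let me think about how I'd prove this.

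The key point is that $\Phi_t$ is $(L_f\sigma_t + L_g)$-smooth (since $f(\cdot,\theta)$ is $L_f$-smooth for every $\theta$, $F = \E[f]$ is $L_f$-smooth; similarly $G$ is $L_g$-smooth; the combination $\sigma_t F + G$ is $(L_f \sigma_t + L_g)$-smooth). Then I use the descent lemma (smoothness inequality):
$$\Phi_t(x_{t+1}) \leq \Phi_t(x_t) + \nabla\Phi_t(x_t)^\top (x_{t+1} - x_t) + \frac{L_f\sigma_t + L_g}{2}\|x_{t+1}-x_t\|^2.$$

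Now $x_{t+1} - x_t = \alpha_t(v_t - x_t)$, so $\|x_{t+1}-x_t\|^2 = \alpha_t^2 \|v_t - x_t\|^2 \leq \alpha_t^2 D^2$ by the diameter bound. That gives the curvature term $\frac{(L_f\sigma_t+L_g)D^2\alpha_t^2}{2}$.

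For the linear term: $\nabla\Phi_t(x_t)^\top(x_{t+1}-x_t) = \alpha_t \nabla\Phi_t(x_t)^\top(v_t - x_t)$. The crucial step is that $v_t$ is the LMO output minimizing the *estimated* gradient, so $\widehat{\grad\Phi}_t^\top v_t \leq \widehat{\grad\Phi}_t^\top x_{\opt}$. I rewrite using the estimated gradient and then correct by the error. Specifically, I'd write
$$\nabla\Phi_t(x_t)^\top(v_t - x_t) = \widehat{\grad\Phi}_t^\top(v_t - x_t) + (\nabla\Phi_t(x_t) - \widehat{\grad\Phi}_t)^\top(v_t - x_t).$$
By optimality of $v_t$, $\widehat{\grad\Phi}_t^\top(v_t - x_t) \leq \widehat{\grad\Phi}_t^\top(x_{\opt} - x_t)$. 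Then convert back: $\widehat{\grad\Phi}_t^\top(x_{\opt}-x_t) = \nabla\Phi_t(x_t)^\top(x_{\opt}-x_t) + (\widehat{\grad\Phi}_t - \nabla\Phi_t(x_t))^\top(x_{\opt}-x_t)$, and by convexity $\nabla\Phi_t(x_t)^\top(x_{\opt}-x_t) \leq \Phi_t(x_{\opt}) - \Phi_t(x_t)$. The two error inner-products are each bounded by $\|\nabla\Phi_t(x_t)-\widehat{\grad\Phi}_t\| \cdot D$ via Cauchy–Schwarz and the diameter bound, combining to $2\alpha_t D\|\cdot\|$.

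=== BEGIN PROOF PROPOSAL ===

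\begin{proof}[Proof proposal for \cref{lemma:recursive-rule}]
The plan is to apply the standard conditional-gradient descent argument to the regularized objective $\Phi_t = \sigma_t F + G$, carefully tracking the error incurred by replacing the true gradient $\nabla\Phi_t(x_t)$ with the estimate $\widehat{\grad\Phi}_t$. First I would observe that, under \cref{ass:f-smooth,ass:g-smooth}, the averaged functions $F$ and $G$ inherit $L_f$- and $L_g$-smoothness respectively, so $\Phi_t$ is $(L_f\sigma_t + L_g)$-smooth on a neighborhood of $X$. Invoking the descent (quadratic upper bound) inequality at the points $x_t$ and $x_{t+1} = x_t + \alpha_t(v_t - x_t)$ gives
\[
    \Phi_t(x_{t+1}) \leq \Phi_t(x_t) + \alpha_t \nabla\Phi_t(x_t)^\top(v_t - x_t) + \frac{(L_f\sigma_t + L_g)}{2}\alpha_t^2 \|v_t - x_t\|^2.
\]
Since $v_t, x_t \in X$, \cref{ass:X-compact} bounds $\|v_t - x_t\|^2 \leq D^2$, which produces the curvature term $\tfrac{(L_f\sigma_t+L_g)D^2\alpha_t^2}{2}$.

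The core of the argument is to control the linear term $\alpha_t \nabla\Phi_t(x_t)^\top(v_t - x_t)$ using the optimality of $v_t$. Here $v_t$ minimizes the \emph{estimated} linearization $\widehat{\grad\Phi}_t^\top v$ over $X$, not the true one, so I would split
\[
    \nabla\Phi_t(x_t)^\top(v_t - x_t) = \widehat{\grad\Phi}_t^\top(v_t - x_t) + \bigl(\nabla\Phi_t(x_t) - \widehat{\grad\Phi}_t\bigr)^\top(v_t - x_t).
\]
By the LMO optimality $\widehat{\grad\Phi}_t^\top v_t \leq \widehat{\grad\Phi}_t^\top x_{\opt}$ (using $x_{\opt} \in X$), the first term is bounded by $\widehat{\grad\Phi}_t^\top(x_{\opt} - x_t)$, which I then rewrite back in terms of the true gradient as $\nabla\Phi_t(x_t)^\top(x_{\opt} - x_t) + (\widehat{\grad\Phi}_t - \nabla\Phi_t(x_t))^\top(x_{\opt} - x_t)$. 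Convexity of $\Phi_t$ (from \cref{ass:g-f-convex} and $\sigma_t > 0$) yields $\nabla\Phi_t(x_t)^\top(x_{\opt} - x_t) \leq \Phi_t(x_{\opt}) - \Phi_t(x_t)$.

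The two residual inner products involving the gradient error are each handled by Cauchy--Schwarz together with the diameter bound: both $v_t - x_t$ and $x_{\opt} - x_t$ lie in a set of diameter $D$, so each contributes at most $D\,\|\nabla\Phi_t(x_t) - \widehat{\grad\Phi}_t\|$, giving the aggregate error term $2\alpha_t D\,\|\nabla\Phi_t(x_t) - \widehat{\grad\Phi}_t\|$. Collecting these pieces and rearranging $\Phi_t(x_{t+1}) - \Phi_t(x_{\opt})$ in terms of $(1-\alpha_t)(\Phi_t(x_t) - \Phi_t(x_{\opt}))$ delivers the claimed recursion. I do not expect any serious obstacle here; the only point requiring care is the bookkeeping in the gradient-error split, since the error enters through two distinct inner products (against $v_t - x_t$ and against $x_{\opt} - x_t$), and one must bound both uniformly by the diameter $D$ rather than conflating them.
\end{proof}

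=== END PROOF PROPOSAL ===
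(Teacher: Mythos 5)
Your proposal is correct and follows essentially the same argument as the paper: the smoothness (descent) inequality for $\Phi_t$, the split of the linear term via the estimated gradient with LMO optimality against $x_{\opt}$, Cauchy--Schwarz plus the diameter bound on both error inner products to obtain $2\alpha_t D \|\nabla\Phi_t(x_t) - \widehat{\grad \Phi}_t\|$, and convexity of $\Phi_t$ to close the recursion. The only cosmetic difference is that the paper first derives the intermediate bound for an arbitrary comparator $w_t \in X$ before specializing to $w_t = x_{\opt}$ (a form it reuses later in the nonconvex analysis), whereas you substitute $x_{\opt}$ directly; this changes nothing in the proof of the stated lemma.
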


\begin{proof}[Proof of \cref{lemma:recursive-rule}]
    Recall that 
    \[ x_{t+1} = x_t + \alpha_t(v_t - x_t), \quad \text{where} \quad v_t \in \argmin_{v \in X} \left\{ \left(\widehat{\grad \Phi}_t\right)^\top v\right\}. \]
    As both functions $F$ and $G$ are assumed to be convex and smooth, it is easy to show that $\Phi$ is convex and $\sigma_t L_f + L_g$-smooth. By smoothness of $\Phi_t$ and the assumption that $\diam(X) = D$, one can show that
    \begin{align}
        \Phi_t(x_{t+1}) &\leq  \Phi_t(x_t)+\alpha_t \nabla \Phi_t(x_t)^\top (v_t-x_t)+\frac{(\sigma_tL_f+L_g)\alpha_t^2}{2}\|v_t-x_t\|^2 \notag \\
        &\leq \Phi_t(x_t)+\alpha_t \nabla \Phi_t(x_t)^\top (v_t-x_t)+\frac{(\sigma_tL_f+L_g)D^2\alpha_t^2}{2}. \notag
    \end{align}
    Note that for any $w_t \in X$ and any gradient estimator $\widehat{\grad \Phi}_t$, we have
    \begin{align*}
        \nabla& \Phi_t(x_t)^\top (v_t-x_t) \\
        &=(\nabla \Phi_t(x_t)-\widehat{\grad \Phi}_t)^\top (v_t-x_t)+\widehat{\grad \Phi}_t^\top (v_t-x_t)\\
        &\leq (\nabla \Phi_t(x_t)-\widehat{\grad \Phi}_t)^\top (v_t-x_t)+\widehat{\grad \Phi}_t^\top (w_t-x_t) \\
        &= (\nabla \Phi_t(x_t)-\widehat{\grad \Phi}_t)^\top (v_t-x_t) + (\widehat{\grad \Phi}_t-\grad \Phi_t(x_t))^\top (w_t-x_t) + \grad \Phi_t(x_t)^\top (w_t-x_t) \\
        &\leq 2 D \| \widehat{\grad \Phi}_t-\grad \Phi_t(x_t) \| + \grad \Phi_t(x_t)^\top (w_t-x_t),
    \end{align*}
    where the first inequality follows from the definition of $v_t$, and the second inequality uses the Cauchy-Schwarz inequality, and the assumption that $\diam(X) = D$. Thus, by smoothness of $\Phi_t$, for any $w_t \in X$ and any gradient estimator $\widehat{\grad \Phi}_t$, we have
    \begin{align}
        \Phi_t(x_{t+1}) 
        \!\leq\! \Phi_t(x_t) \!+\! \alpha_t \nabla \Phi_t(x_t)^\top (w_t \!-\! x_t) \!+\! 2 D \alpha_t \| \widehat{\grad \Phi}_t \!-\! \grad \Phi_t(x_t) \| + \frac{(\sigma_tL_f \!+\! L_g) D^2 \alpha_t^2}{2}. \label{eq:phi:smoothness}
    \end{align}
    and set $w_t = x_{\opt}$. Thanks to the convexity of $\Phi_t$, one can easily show that $ \nabla \Phi_t(x_t)^\top (x_{\opt} - x_t) \leq \Phi_t(x_{\opt}) - \Phi_t(x_t)$. Thus, we obtain
    \begin{align*}
        &\Phi_t(x_{t+1})-\Phi_t(x_{\opt})\\ &\leq \Phi_t(x_t)-\Phi_t(x_{\opt}) +\alpha_t\left(\Phi_t(x_{\opt})-\Phi_t(x_t)+2 D\|\widehat{\grad \Phi}_t-\grad \Phi_t(x_t)\|\right)+\frac{(\sigma_tL_f+L_g)D^2\alpha_t^2}{2} \\
        &= (1-\alpha_t)\left(\Phi_t(x_{\opt})-\Phi_t(x_t)\right)+2\alpha_t D\|\widehat{\grad \Phi}_t-\grad \Phi_t(x_t)\|+\frac{(\sigma_tL_f+L_g)D^2\alpha_t^2}{2}.
    \end{align*}
    The proof concludes by using the definition of the stepsizes $\alpha_t$.
\end{proof}

\begin{lemma}\label{lemma: common bound}
    Let $\{x_t\}_{t \geq 0}$ denote the iterates generated by \cref{alg:IR-SCG} or \cref{alg:IR-FSCG} with stepsizes $\{\alpha_t\}_{t\geq 0}$ given by $\alpha_t = 2/(t+2)$ for any $t \geq k$. If \cref{ass:smooth-functions} holds, then for each $t \geq k$, we have
    \begin{equation} \label{eq:weighted-sum}
        \begin{aligned}
        t(t+1) \left( \Phi_t(x_t) - \Phi_t(x_{\opt}) \right) &+ \sum_{i \in [t] \setminus [k]}(i+1)i(\sigma_{i-1}-\sigma_i)(F(x_i)-F_{\opt})\\[1ex] 
        &\leq (k+1)k\left(\Phi_k(x_k)-\Phi_k(x_{\opt})\right)+2(\sigma_0 L_f+L_g)D^2(t-k)\\[1ex]
        &\quad+\sum_{i \in [t]\setminus [k]}4iD\|\widehat{\grad \Phi}_{i-1}-\grad \Phi_{i-1}(x_{i-1})\|.
        \end{aligned}
    \end{equation}
\end{lemma}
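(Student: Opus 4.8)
The plan is to lift the one-step inequality of \cref{lemma:recursive-rule} into the weighted telescoped estimate \cref{eq:weighted-sum} by multiplying through by a carefully chosen weight and summing. Since $\alpha_t = 2/(t+2)$ gives $1-\alpha_t = t/(t+2)$, the natural weight is $w_t := (t+1)(t+2)$, which makes $w_t(1-\alpha_t) = t(t+1)$ align with the leading term at the previous index. Applying $w_t$ to \cref{lemma:recursive-rule} and simplifying the two remaining factors via $w_t\alpha_t^2/2 = 2(t+1)/(t+2)$ and $2 w_t\alpha_t = 4(t+1)$, I would obtain
\[
(t+1)(t+2)\big(\Phi_t(x_{t+1})-\Phi_t(x_{\opt})\big) \leq t(t+1)\big(\Phi_t(x_t)-\Phi_t(x_{\opt})\big) + \tfrac{2(t+1)}{t+2}(L_f\sigma_t+L_g)D^2 + 4(t+1)D\big\|\grad\Phi_t(x_t)-\widehat{\grad \Phi}_t\big\|.
\]

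The main obstacle is that $\Phi_t = \sigma_t F + G$ itself depends on $t$, so the left-hand side records $\Phi_t$ at $x_{t+1}$ rather than the self-consistent quantity $\Phi_{t+1}(x_{t+1})-\Phi_{t+1}(x_{\opt})$ that telescoping requires. I would bridge this mismatch with the exact identity
\[
\Phi_t(x_{t+1})-\Phi_t(x_{\opt}) = \big(\Phi_{t+1}(x_{t+1})-\Phi_{t+1}(x_{\opt})\big) + (\sigma_t-\sigma_{t+1})\big(F(x_{t+1})-F_{\opt}\big),
\]
which is immediate from $\Phi_s(x)=\sigma_s F(x)+G(x)$ and holds regardless of the sign of $F(x_{t+1})-F_{\opt}$. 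The freed term $(t+1)(t+2)(\sigma_t-\sigma_{t+1})(F(x_{t+1})-F_{\opt})$ is precisely what, after reindexing $i = t+1$, yields the weighted inner-objective sum $\sum_{i}(i+1)i(\sigma_{i-1}-\sigma_i)(F(x_i)-F_{\opt})$ on the left of \cref{eq:weighted-sum}.

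Writing $a_t := t(t+1)\big(\Phi_t(x_t)-\Phi_t(x_{\opt})\big)$, the combined inequality reads $a_{t+1}-a_t + (t+1)(t+2)(\sigma_t-\sigma_{t+1})(F(x_{t+1})-F_{\opt}) \leq \tfrac{2(t+1)}{t+2}(L_f\sigma_t+L_g)D^2 + 4(t+1)D\|\grad\Phi_t(x_t)-\widehat{\grad \Phi}_t\|$. Summing over $t = k,\dots,T-1$ telescopes the $a_t$ terms to $a_T - a_k$, and the two reindexed sums match those appearing in \cref{eq:weighted-sum}. To finish, I would bound the constant term using monotonicity of $\{\sigma_t\}$: since $\sigma_t \le \sigma_0$ and $(t+1)/(t+2) \le 1$, each summand satisfies $\tfrac{2(t+1)}{t+2}(L_f\sigma_t+L_g)D^2 \le 2(\sigma_0 L_f+L_g)D^2$, so the $T-k$ such terms contribute at most $2(\sigma_0 L_f+L_g)D^2(T-k)$. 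Rearranging and renaming $T$ to $t$ gives exactly \cref{eq:weighted-sum}; the boundary case $T=k$ holds trivially as both sums are empty.
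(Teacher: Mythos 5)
Your proposal is correct and matches the paper's own proof essentially step for step: the paper likewise rewrites the one-step bound of \cref{lemma:recursive-rule} as a recursion for $h_t := \Phi_t(x_t)-\Phi_t(x_{\opt})$, uses the same identity $\Phi_t(x_{t+1})-\Phi_t(x_{\opt}) = \Phi_{t+1}(x_{t+1})-\Phi_{t+1}(x_{\opt})+(\sigma_t-\sigma_{t+1})(F(x_{t+1})-F_{\opt})$ to realign the time-varying objective, multiplies by the weight $(t+1)(t+2)$, telescopes from $k$, and bounds the constant terms via $i/(i+1)<1$ and $\sigma_{i-1}\leq\sigma_0$. The only cosmetic difference is that the paper absorbs the $(\sigma_t-\sigma_{t+1})(F(x_{t+1})-F_{\opt})$ term into an auxiliary quantity $\eta_t$ before summing, whereas you carry it explicitly.
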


\begin{proof}[Proof of \cref{lemma: common bound}]
    By definition of $\Phi_t$, we have
    \begin{align*}
        \Phi_t(x_{t+1}) \!-\! \Phi_t(x_{\opt})
        &= \sigma_{t+1}(F(x_{t+1})-F_{\opt}) \!+\! G(x_{t+1}) \!-\! G_{\opt} \!+\! (\sigma_{t}-\sigma_{t+1})(F(x_{t+1})-F_{\opt})\\
        &= \Phi_{t+1}(x_{t+1})-\Phi_{t+1}(x_{\opt})+(\sigma_{t}-\sigma_{t+1})(F(x_{t+1})-F_{\opt}).
    \end{align*}
    To simplify the derivations, we introduce the shorthands $ h_t := \Phi_t(x_t)-\Phi_t(x_{\opt}) $ and
    \[ \eta_t := \frac{2(L_f \sigma_t+L_g)D^2}{(t+2)^2}+\frac{4D}{t+2}\|\widehat{\grad \Phi}_t-\grad \Phi_t(x_t)\| -(\sigma_t-\sigma_{t+1})(F(x_{t+1})-F_{\opt}). \]
    Applying \cref{lemma:recursive-rule} with stepsize $\alpha_t = 2 / (t+2)$ for all $t \geq k$ gives the recursion $h_{t+1} \leq \frac{t}{t+2}h_t + \eta_t$ for any $t \geq k$. This can be re-expressed~as
    $$(t+1)t h_t \leq t(t-1)h_{t-1}+t(t+1)\eta_{t-1}$$
    for any $t \geq k+1$.
    Thus, for $t \geq k+1$, it holds that
    \begin{align*}
        (t+1)th_t - (k+1)kh_k 
        =\sum_{i \in [t]\setminus [k]} \left((i+1)i h_i-(i-1)i h_{i-1}\right)\leq \sum_{i \in [t]\setminus [k]} (i+1)i\eta_{i-1}.
    \end{align*}
    By definition of $h_t$ and $\eta_t$, and using the inequalities $i/(i+1)<1$ and $\sigma_{i-1} \leq \sigma_0$ for $i \in [t]\setminus [k]$, we arrive at
    \begin{align*}
        (t+1) t &\left( \Phi_t(x_t) - \Phi_t(x_{\opt}) \right) - (k+1) k \left( \Phi_k(x_k) - \Phi_k(x_{\opt})\right)\\
        &\textstyle \leq 2(\sigma_0 L_f+L_g)D^2(t-k)-\sum_{i\in [t] \setminus [k]}(i+1)i(\sigma_{i-1}-\sigma_i)(F(x_i)-F_{\opt})\\
        &\quad \textstyle +\sum_{i \in [t]\setminus [k]}4iD \|\widehat{\grad \Phi}_{i-1}-\grad \Phi_{i-1}(x_{i-1})\|,
    \end{align*}
    which implies \eqref{eq:weighted-sum}. This completes the proof.
\end{proof}

\begin{proposition}\label{prop:bounds}
    Let $\{x_t\}_{t \geq 0}$ denote the iterates generated by \cref{alg:IR-SCG} or \cref{alg:IR-FSCG} with stepsizes $\{\alpha_t\}_{t\geq 0}$ given by $\alpha_t = 2/(t+2)$ for any $t \geq k$. If \cref{ass:smooth-functions} holds, then for each $t \geq k$, we have  
    \begin{align} \label{eq:weighted-sum-g}
        \begin{aligned}
            G(x_t)-G_{\opt}
            &\leq \frac{(k + 1) k \left( \Phi_k(x_k) - \Phi_k(x_{\opt}) \right)}{t (t + 1)} + \frac{2(\sigma_0 L_f+L_g)D^2(t-k)}{t(t+1)} \\[1ex]
            &\quad + \sum_{i \in [t]\setminus [k]} \frac{4iD \|\widehat{\grad \Phi}_{i-1}-\grad \Phi_{i-1}(x_{i-1})\|}{t (t + 1)} \\[1ex]
            &\quad +\left(F_{\opt}-\min_{x \in X} F(x)\right) \left( \sigma_t + \frac{\sum_{i \in [t]} (i+1)i(\sigma_{i-1}-\sigma_i)}{t(t+1)} \right),
        \end{aligned}
    \end{align}
    and 
    \begin{align}
        (t+1)t\sigma_t(F(x_t)-F_{\opt}) &+\sum_{i \in [t]\setminus [k]}(i+1)i(\sigma_{i-1}-\sigma_i)(F(x_i)-F_{\opt})\notag\\
        &\leq (k+1)k\left(\Phi_k(x_k)-\Phi_k(x_{\opt})\right)+2(\sigma_0 L_f+L_g)D^2(t-k) \label{eq:weighted-sum-f} \\[1ex]
        &\quad+\sum_{i \in [t]\setminus [k]}4iD \|\widehat{\grad \Phi}_{i-1}-\grad \Phi_{i-1}(x_{i-1})\|.
        \notag
    \end{align}
\end{proposition}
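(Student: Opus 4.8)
The plan is to derive both estimates directly from the master inequality \eqref{eq:weighted-sum} of \cref{lemma: common bound} by splitting the composite gap into its outer and inner parts. Since any $x_{\opt} \in X_{\opt}$ satisfies $G(x_{\opt}) = G_{\opt}$ and $F(x_{\opt}) = F_{\opt}$, we have the exact decomposition
\[
\Phi_t(x_t) - \Phi_t(x_{\opt}) = \sigma_t\bigl(F(x_t) - F_{\opt}\bigr) + \bigl(G(x_t) - G_{\opt}\bigr).
\]
Everything then follows by controlling the sign of each piece using feasibility of the iterates and monotonicity of $\{\sigma_t\}$.

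For the upper-level estimate \eqref{eq:weighted-sum-f}, I would use that every iterate stays feasible, $x_t \in X$, so $G(x_t) \geq G_{\opt}$ and hence $G(x_t) - G_{\opt} \geq 0$. Substituting the decomposition above into \eqref{eq:weighted-sum} and discarding this nonnegative term from the left-hand side leaves $t(t+1)\sigma_t(F(x_t)-F_{\opt})$ together with the weighted sum $\sum_{i \in [t]\setminus[k]}(i+1)i(\sigma_{i-1}-\sigma_i)(F(x_i)-F_{\opt})$ bounded above by exactly the right-hand side of \eqref{eq:weighted-sum}, which is precisely \eqref{eq:weighted-sum-f}.

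For the lower-level estimate \eqref{eq:weighted-sum-g}, I would instead isolate $t(t+1)(G(x_t)-G_{\opt})$ by moving the outer terms to the right. This forces me to upper-bound $-t(t+1)\sigma_t(F(x_t)-F_{\opt})$ and $-\sum_{i\in[t]\setminus[k]}(i+1)i(\sigma_{i-1}-\sigma_i)(F(x_i)-F_{\opt})$. Since the iterates need not lie in $X_{\opt}$, the differences $F(x_i)-F_{\opt}$ may be negative, so the natural lower bound is $F(x_i)-F_{\opt} \geq \min_{x\in X}F(x) - F_{\opt} = -\bigl(F_{\opt}-\min_{x\in X}F(x)\bigr)$, where the gap $F_{\opt}-\min_{x\in X}F(x) \geq 0$ is finite because $X$ is compact and $F$ continuous. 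Because $\{\sigma_t\}$ is nonincreasing, each weight $(i+1)i(\sigma_{i-1}-\sigma_i)$ is nonnegative, so this substitution produces the factor $F_{\opt}-\min_{x\in X}F(x)$ multiplying $t(t+1)\sigma_t + \sum_{i\in[t]\setminus[k]}(i+1)i(\sigma_{i-1}-\sigma_i)$; enlarging the index set from $[t]\setminus[k]$ to $[t]$ (which only adds nonnegative terms) and dividing through by $t(t+1)$ yields \eqref{eq:weighted-sum-g}.

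The computation is essentially bookkeeping; the only delicate point is the sign handling in the lower-level bound. Because the decreasing regularization makes $\sigma_{i-1}-\sigma_i \geq 0$ while $F(x_i)-F_{\opt}$ can have either sign, one must track that the weighted $F$-terms are being \emph{lower}-bounded before negating, and that enlarging the summation range is legitimate precisely because the weights are nonnegative. Keeping these directions consistent is the main obstacle; the rest is routine substitution into \eqref{eq:weighted-sum}.
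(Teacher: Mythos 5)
Your proposal is correct and takes essentially the same route as the paper's proof: both split $\Phi_t(x_t)-\Phi_t(x_{\opt})$ into $\sigma_t(F(x_t)-F_{\opt}) + (G(x_t)-G_{\opt})$, then use feasibility of $\{x_t\}_{t\geq 0} \subseteq X$ to discard $G(x_t)-G_{\opt}\geq 0$ for \eqref{eq:weighted-sum-f} and to lower-bound the $F$-terms via $F(x_i)-F_{\opt}\geq \min_{x\in X}F(x)-F_{\opt}$ for \eqref{eq:weighted-sum-g}. Your explicit tracking of the nonnegativity of the weights $(i+1)i(\sigma_{i-1}-\sigma_i)$ and of the enlargement of the index set from $[t]\setminus[k]$ to $[t]$ is, if anything, slightly more careful than the paper's terse write-up.
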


\begin{proof}[Proof of \cref{prop:bounds}]
    By definition of the function $\Phi_t$, the first term in~\cref{eq:weighted-sum} satisfies
    \begin{align}
        \label{eq:aux}
        t (t \!+\! 1) \left( \Phi_t(x_t) \!-\! \Phi_t(x_{\opt}) \right) \!=\! t (t \!+\! 1) \left( G(x_t) \!-\! G(x_{\opt}) \right) \!+\! t(t \!+\! 1) \sigma_t \left(F(x_t) \!-\! F(x_{\opt})\right).
    \end{align}
    Since $x_t \in X$, we have $\min_{x \in X} F(x) - F_{\opt} \leq F(x_t) - F_{\opt}$ for all $t \geq 0$. Using this bound together with the identity in \cref{eq:aux}, we can further lower bound the left-hand side of inequality~\eqref{eq:weighted-sum}, and arrive at the bound~\eqref{eq:weighted-sum-g}. This completes the proof of the first claim.

    For the second claim, since $\{x_t\}_{t \geq 0} \in X$ is feasible in the lower-level problem, it follows directly that $G(x_t) \geq G_{\opt}$. Combining this inequality with~\eqref{eq:aux} and ~\eqref{eq:weighted-sum}, we obtain \cref{eq:weighted-sum-f}, which concludes the proof.
\end{proof}
\subsection{Nonconvex outer-level}
When $F$ is possibly nonconvex, we start by establishing a bound, akin to Lemma~\ref{lemma: common bound}.

\begin{lemma} \label{lemma:nonconvex-recursive-rule}
    Let $\{x_t\}_{t \geq 0}$ denote the iterates generated by \cref{alg:IR-SCG} or \cref{alg:IR-FSCG} with $\{\alpha_t,\sigma_t\}_{t\geq 0}$ given by $\alpha_t = (t+1)^{-\omega}$ and $\sigma_t = \varsigma(t+1)^{-p}$ for $t \geq k$, with $0 < p \leq \omega$. If \cref{ass:smooth-functions} holds with the exception that $F$ may be nonconvex, then for every $t \geq k$, we~have
    \begin{equation*}
        \begin{aligned}
            (t+1)^\omega\CG(x_{t+1})-k^\omega\CG(x_k) &\leq 2\varsigma\sup_{z \in X}|F(z)| (t+1)^{\omega-p}+\sigma_k F(x_k) -\sum_{i =k}^{t} \sigma_i\CF(x_i)\\
            &\quad + \sum_{i = k}^{t}   \left(2 D \left\| \widehat{\grad \Phi}_i-\grad \Phi_i(x_i) \right\| + \frac{(\sigma_iL_f+L_g)D^2}{2(i+1)^{\omega}}\right).
        \end{aligned}
    \end{equation*}
\end{lemma}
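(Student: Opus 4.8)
The plan is to mirror the telescoping argument behind \cref{lemma: common bound}, replacing the convexity of $\Phi_t$ (which fails once $F$ is nonconvex) by convexity of $G$ alone, together with the stationarity gap $\CF$. I would start from the smoothness estimate \eqref{eq:phi:smoothness}, which was derived \emph{without} ever invoking convexity of $\Phi_t$ and therefore still holds here: for any $w_t \in X$,
\[ \Phi_t(x_{t+1}) \leq \Phi_t(x_t) + \alpha_t \grad\Phi_t(x_t)^\top(w_t - x_t) + 2D\alpha_t\|\widehat{\grad \Phi}_t - \grad\Phi_t(x_t)\| + \tfrac{(\sigma_tL_f+L_g)D^2\alpha_t^2}{2}. \]
I would then choose $w_t \in \argmax_{v \in X_{\opt}} \nabla F(x_t)^\top(x_t - v)$ (the maximum is attained since $X_{\opt}$ is a closed subset of the compact $X$, hence compact), so that $\nabla F(x_t)^\top(w_t - x_t) = -\CF(x_t)$, while convexity of $G$ together with $G(w_t)=G_{\opt}$ gives $\nabla G(x_t)^\top(w_t - x_t) \leq G_{\opt} - G(x_t) = -\CG(x_t)$. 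Since $\grad\Phi_t = \sigma_t\nabla F + \nabla G$, this yields $\grad\Phi_t(x_t)^\top(w_t-x_t) \leq -\sigma_t\CF(x_t) - \CG(x_t)$. Substituting, writing $G = \CG + G_{\opt}$ on both sides and cancelling $G_{\opt}$, I obtain the one-step recursion
\[ \CG(x_{t+1}) \leq (1-\alpha_t)\CG(x_t) + \sigma_t(F(x_t) - F(x_{t+1})) - \alpha_t\sigma_t\CF(x_t) + E_t, \]
where $E_t := 2D\alpha_t\|\widehat{\grad \Phi}_t - \grad\Phi_t(x_t)\| + \tfrac{(\sigma_tL_f+L_g)D^2\alpha_t^2}{2}$.

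Next I would convert this into a telescopable form by multiplying through by $1/\alpha_t = (t+1)^\omega$. Because $\alpha_t = (t+1)^{-\omega}$, the weight turns $\alpha_t\sigma_t\CF(x_t)$ into exactly $\sigma_t\CF(x_t)$ and turns $E_t$ into the stated $E_t' := 2D\|\widehat{\grad \Phi}_t - \grad\Phi_t(x_t)\| + \tfrac{(\sigma_tL_f+L_g)D^2}{2(t+1)^\omega}$, while the coefficient of $\CG(x_t)$ becomes $(t+1)^\omega(1-\alpha_t) = (t+1)^\omega - 1$. Using concavity of $s\mapsto s^\omega$ for $\omega\in(0,1]$, so that $(t+1)^\omega - t^\omega \leq 1$, together with $\CG(x_t) = G(x_t)-G_{\opt}\geq 0$ — the latter because the iterates remain feasible, exactly as argued in the proof of \cref{prop:bounds} — I would bound $[(t+1)^\omega - 1]\CG(x_t) \leq t^\omega\CG(x_t)$ and arrive at
\[ (t+1)^\omega\CG(x_{t+1}) - t^\omega\CG(x_t) \leq (t+1)^\omega\sigma_t(F(x_t)-F(x_{t+1})) - \sigma_t\CF(x_t) + E_t'. \]
Summing over $i = k,\dots,t$ telescopes the left-hand side to $(t+1)^\omega\CG(x_{t+1}) - k^\omega\CG(x_k)$ and produces the terms $-\sum_{i=k}^t \sigma_i\CF(x_i) + \sum_{i=k}^t E_i'$ directly.

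The remaining task is to control the accumulated outer-objective differences $\sum_{i=k}^t c_i(F(x_i)-F(x_{i+1}))$ with $c_i := (i+1)^\omega\sigma_i = \varsigma(i+1)^{\omega-p}$. I would use summation by parts, $\sum_{i=k}^t c_i(F(x_i)-F(x_{i+1})) = c_k F(x_k) + \sum_{i=k+1}^t (c_i - c_{i-1})F(x_i) - c_t F(x_{t+1})$. Here the hypothesis $p \leq \omega$ is essential: it makes $c_i$ nondecreasing, so $c_i - c_{i-1}\geq 0$ and the increments sum to $c_t - c_k$. Bounding every $F(x_i)$ by $M := \sup_{z\in X}|F(z)| < \infty$ (finite by continuity of $F$ on the compact $X$) and splitting $c_k F(x_k) = \sigma_k F(x_k) + (c_k-\sigma_k)F(x_k)$, all contributions other than the isolated $\sigma_k F(x_k)$ collapse to at most $(c_k-\sigma_k)M + (c_t-c_k)M + c_t M = 2c_t M - \sigma_k M \leq 2\varsigma M(t+1)^{\omega-p}$, which is exactly the leading term of the claim. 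Combining the three displays then yields the stated inequality.

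The principal obstacle is this final step: unlike the convex case, the $\sigma_t F$ term no longer cancels against $F_{\opt}$, so one is left with a genuine telescoping sum of outer-objective values that must be controlled uniformly in $t$. The monotonicity of the weights $c_i$ — guaranteed precisely by $p\leq\omega$ — together with the compactness-based boundedness of $F$ are what render this sum $O((t+1)^{\omega-p})$ rather than divergent; the restriction $\omega\leq 1$ enters only through the elementary estimate $(t+1)^\omega - t^\omega \leq 1$ used to discard the $\CG(x_t)$ coefficient, and the isolated $\sigma_k F(x_k)$ is kept explicit merely for convenience in the subsequent analysis.
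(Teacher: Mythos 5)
Your proof is correct and follows essentially the same route as the paper's: the convexity-free smoothness bound \eqref{eq:phi:smoothness} with a comparator chosen in $X_{\opt}$, convexity of $G$ to produce the term $-\sigma_t\CF(x_t)-\CG(x_t)$, multiplication by $(t+1)^{\omega}$ together with $(t+1)^{\omega}-1\leq t^{\omega}$ and $\CG(x_t)\geq 0$, telescoping from $i=k$ to $t$, and an Abel-summation bound on $\sum_{i}(i+1)^{\omega}\sigma_i\left(F(x_i)-F(x_{i+1})\right)$ using the monotonicity of $(i+1)^{\omega-p}$ (i.e., $p\leq\omega$) and boundedness of $F$ on the compact $X$. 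Your summation-by-parts bookkeeping is in fact slightly cleaner than the paper's corresponding display, but the argument is the same.
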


\begin{proof}[Proof of \cref{lemma:nonconvex-recursive-rule}]
   Using the definition of $\Phi_t = \sigma_t F + G$, for any $t \geq k$, we have
    \begin{equation*}
        \begin{aligned}
            \sigma_tF(x_{t+1}) +G(x_{t+1}) 
            &\leq \sigma_t F(x_t)+G(x_t) + \alpha_t \min_{v \in X}(\sigma_t \nabla F(x_t)+\nabla G(x_t))^\top (v-x_t)\\
            &\quad +2 D \alpha_t \| \widehat{\grad \Phi}_t - \grad \Phi_t(x_t)\| + \frac{(\sigma_t L_f + L_g)  D^2 \alpha_t^2}{2} 
        \end{aligned}
    \end{equation*}
    where the inequality follows directly from~\eqref{eq:phi:smoothness} and by minimizing over $w_t \in X$. We emphasize that~\eqref{eq:phi:smoothness} does not involve any convexity assumptions on $\Phi_t$ and relies solely on its smoothness.
    Furthermore, notice that for any $x \in X$,
    \begin{align*}
        \min_{v \in X} ~(\sigma \nabla F(x) + \nabla G(x))^\top (v - x) 
        &\leq \min_{v \in X_{\opt}} ~(\sigma \nabla F(x) + \nabla G(x))^\top (v - x) \\
        &\leq \min_{v \in X_{\opt}} ~ \left\{ \sigma\nabla F(x)^\top (v - x) +G(v)-G(x) \right\} \\
        &= -\sigma \CF(x) - \CG(x),
    \end{align*}
    where the last equality follows from the definition of $\CF$ and $\CG$.
    Using the above bound, we may thus conclude that
    \begin{align*}
        \sigma_t F(x_{t+1}) + G(x_{t+1}) & \leq \sigma_t F(x_t)+G(x_t) -\alpha_t (\sigma_t \CF(x_t)+\CG(x_t))\\
        &\quad + 2 D \alpha_t \| \widehat{\grad \Phi}_t - \grad \Phi_t(x_t)\| + \frac{(\sigma_t L_f + L_g)  D^2 \alpha_t^2}{2}.
    \end{align*}
    A simple re-arrangement and using the definition of $\CG$ then yields the bound
    \begin{align} \label{eq:main:nonconvex}
    \begin{aligned}
        \alpha_t (\sigma_t \CF(x_t)+\CG(x_t)) 
        &\leq \sigma_t(F(x_t)-F(x_{t+1})) + \CG(x_t)-\CG(x_{t+1}) \\
        &\quad +2 D \alpha_t \|\widehat{\grad \Phi}_t - \grad \Phi_t(x_t)\| + \frac{(\sigma_tL_f+L_g) D^2 \alpha_t^2}{2}.
    \end{aligned}
    \end{align}
    Using the definition of $\alpha_t$ and the fact that $(t+1)^\omega \leq t^\omega-1$ for any $t \geq 0$, we arrive at
    \begin{align*}
        (t+1)^\omega\CG(x_{t+1}) &\leq t^\omega \CG(x_t) +(t+1)^\omega\sigma_t(F(x_t)-F(x_{t+1}))-\sigma_t\CF(x_t)\\
        &\quad + 2 D \|\widehat{\grad \Phi}_t-\grad \Phi_t(x_t)\| + \frac{(\sigma_t L_f + L_g) D^2}{2(t+1)^{\omega}}.
    \end{align*}
    Unwinding this recursion back to $i=k$, we obtain
    \begin{align*}
        (t+1)^\omega\CG(x_{t+1}) 
        &\leq k^\omega \CG(x_k)+\sum_{i = k}^{t} \left((i+1)^\omega\sigma_i(F(x_i)-F(x_{i+1}))\right)-\sum_{i =k}^{t} \sigma_i\CF(x_i)\\
        &\quad + \sum_{i = k}^{t}   \left(2 D \|\widehat{\grad \Phi}_i-\grad \Phi_i(x_i)\| + \frac{(\sigma_iL_f+L_g)D^2}{2(i+1)^{\omega}}\right).
    \end{align*}
    Since $p \leq \omega$, we have $(i+1)^\omega \sigma_i - i^\omega \sigma_{i-1} \geq 0$. Therefore, we deduce that
    \begin{align*}
        &\sum_{i = k}^{t}  (i+1)^\omega\sigma_i(F(x_i)-F(x_{i+1})) \\&= \sum_{i = k}^t \Big( F(x_i)((i+1)^\omega\sigma_i - i^\omega\sigma_{i-1}) \Big) + \sigma_k F(x_k) -(t+1)^\omega\sigma_t F(x_{t+1}) \\
        &\leq \sup_{z \in X}|F(z)|\sum_{i = k}^{t} \Big( ((i+1)^\omega\sigma_i-i^\omega\sigma_{i-1}) \Big) + \sigma_k F(x_k)+(t+1)^\omega\sigma_t\sup_{z \in X}|F(z)|\\
        &\leq 2\varsigma\sup_{z \in X}|F(z)| (t+1)^{\omega-p}+\sigma_k F(x_k).
    \end{align*}
    This completes the proof.
\end{proof}

To establish asymptotic convergence of the proposed methods under nonconvex outer-level, we rely on an observation that function $\cF$ is Lipschitz continuous on $X$. 
\begin{lemma} \label{lemma:CF-lipschitz}
    If \cref{ass:smooth-functions} holds, the function $\CF$ as defined in \eqref{eq:auxiliary} is $L_fD + \max_{z \in X} \|\nabla F(z)\|$-Lipschitz continuous over $X$.
\end{lemma}
\begin{proof}[Proof of \cref{lemma:CF-lipschitz}]
    Given $x,y \in X$, we observe that
    \begin{align*}
        \CF(y)
        &= \max_{v \in X_{\opt}} \left\{\nabla F(y)^\top (y - v) \right\} \\
        &= \max_{v \in X_{\opt}} \{(\nabla F(y) - \nabla F(x))^\top (y -v) + \nabla F(x)^\top (y-x)+\nabla F(x)^\top (x-v)\} \\
        &= \max_{v \in X_{\opt}} \{(\nabla F(y) - \nabla F(x))^\top (y -v) + \nabla F(x)^\top (y-x)+\nabla F(x)^\top (x-v)\} \\
        &\leq \max_{v \in X_{\opt}} \{\|\nabla F(y) - \nabla F(x) \| \|y - v \| \!+\nabla F(x)^\top (y-x)+\nabla F(x)^\top (x-v)\}  \\
        &\leq \max_{v \in X_{\opt}} \{L_f\|x-y\|D+\nabla F(x)^\top (y-x)+\nabla F(x)^\top (x-v)\}\\
        &= L_fD\|x-y\| + \nabla F(x)^\top (y-x)+\CF(x)\\
        &\leq \left(L_fD + \max_{z \in X} \|\nabla F(z)\|\right)\|y-x\|+\CF(x),
    \end{align*}
    where the first and third inequality follows from Cauchy-Schwartz inequality, while the second inequality follows from the $L_f$-smoothness of $F$. \\
    
    By interchanging the role of $y$ and $x$, we deduce that
    $$|\CF(y) \!-\! \CF(x) | \leq  \left(L_fD + \max_{z \in X} \|\nabla F(z)\|\right)\|y-x\|.$$
    This concludes the proof.
\end{proof}
\section{Proofs of Section~\ref{sec:one-sample}}
\label{appendix:one-sample}

\subsection{\texorpdfstring{Proof of \cref{theorem:Sconvergence-rate}}{Proof of Theorem 1}}

We restate \cref{theorem:Sconvergence-rate} with exact upper bounds on sub-optimality gaps.
\begin{namedtheorem}[\ref{theorem:Sconvergence-rate}]
    Let $\{z_t\}_{t \geq 1}$ be the iterates generated by \cref{alg:IR-SCG} with $\alpha_t = 2/(t+2)$ for any $t \geq 0$, and regularization parameters $\{\sigma_t\}_{t \geq 0}$ given by $\sigma_t := \varsigma (t+1)^{-p}$ for some chosen $\varsigma > 0$, $p \in (0,1/2)$. Under \cref{ass:smooth-functions}, for any $t \geq 1$, with probability at least $1-\delta$, we have
    \begin{align*}
        F(z_t) - F_{\text{opt}}\leq \frac{2( L_f+L_g/\varsigma)D^2}{(t+1)^{1-p}} +
        \frac{4c\left(4( L_f+L_g/\varsigma)D+3(\sigma_f+\sigma_g/\varsigma)\right)}{(t+1)^{1/2-p}}\sqrt{\log\left({8dt^2}/{\delta}\right)},
    \end{align*}
    and 
    \begin{align*}
        &G(z_t) - G_{\text{opt}} \\
        &\leq \left(1+\frac{2p}{\min\{1,2(1-p)\}}\right) \left[(1+2p) \left(F_{\text{opt}}-\min_{x \in X} F(x)\right)\varsigma + 2(\varsigma L_f +L_g)D^2\right.\\
        &\quad \left.+4c\left( 8 \left(L_f \varsigma+L_g\right)D^2+3\left(\sigma_f \varsigma+\sigma_g\right)D\right) \sqrt{\frac{2}{(1-2p)e}+\log\left({8d}/{\delta}\right)} \, \right] (t+1)^{-p} ,
    \end{align*}
    where $c$ is some absolute constant. Moreover, with probability $1$, we have
    \[\lim_{t \to \infty} F(x_t) = F_{\opt}, \quad \lim_{t \to \infty} G(x_t) = G_{\opt}.\]
\end{namedtheorem}
The proof proceeds by applying \cref{prop:bounds} with $k = 0$. In this case, the main random component in \cref{eq:weighted-sum-g} and \cref{eq:weighted-sum-f} is the term
\( \sum_{i \in [t]}4iD \|\widehat{\grad \Phi}_{i-1}-\grad \Phi_{i-1}(x_{i-1})\| \). We first derive a probabilistic upper bound for this term, then carefully set the parameters in \cref{alg:IR-SCG} to establish the convergence rate for both the lower- and upper-level problems. 

We begin by bounding the gradient estimator used in \cref{alg:IR-SCG}.

\begin{lemma} \label{lemma:Sestimator}
    Let $\{x_t\}_{t \geq 0}$ denote the iterates generated by \cref{alg:IR-SCG} with stepsize $\alpha_t = 2/(t+2)$ for any $t \geq 0$. If \cref{ass:smooth-functions} holds, then for any $t \geq 1$, given $\delta \in (0,1)$, with probability at least $1-\delta$, for some absolute constant $c$, it (jointly) holds that
    \begin{equation}\label{eq:Snoise}
    \begin{aligned}
         \|\widehat{\grad F}_t-\nabla F(x_t)\| &\leq c\sqrt{2}\left(4L_fD+3\sigma_f\right)\frac{\sqrt{\log(4d/\delta)}}{(t+1)^{1/2}}\\
         \|\widehat{\grad G}_t-\nabla G(x_t)\| &\leq c\sqrt{2}\left(4L_gD+3\sigma_g\right)\frac{\sqrt{\log(4d/\delta)}}{(t+1)^{1/2}}.
    \end{aligned}
    \end{equation}
\end{lemma}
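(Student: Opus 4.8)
The plan is to track the STORM error $\varepsilon_t := \widehat{\grad F}_t - \nabla F(x_t)$ through its own recursion and show it concentrates at the rate $O((t+1)^{-1/2})$. First I would substitute the STORM update of \cref{alg:IR-SCG} into the definition of $\varepsilon_t$ and rearrange to obtain the error recursion
\[
\varepsilon_t = (1-\alpha_t)\,\varepsilon_{t-1} + \alpha_t\, a_t + (1-\alpha_t)\, b_t, \qquad \varepsilon_0 = a_0,
\]
where $a_t := \grad_x f(x_t,\theta_t) - \nabla F(x_t)$ and $b_t := \bigl(\grad_x f(x_t,\theta_t) - \grad_x f(x_{t-1},\theta_t)\bigr) - \bigl(\nabla F(x_t) - \nabla F(x_{t-1})\bigr)$. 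Letting $\mathcal{F}_{t-1}$ denote the $\sigma$-algebra generated by all samples drawn before $\theta_t$, both $a_t$ and $b_t$ are martingale differences, since $x_t$ and $x_{t-1}$ are $\mathcal{F}_{t-1}$-measurable while $\theta_t$ is fresh, so $\E[a_t\mid\mathcal{F}_{t-1}] = \E[b_t\mid\mathcal{F}_{t-1}] = 0$.

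Next I would unroll the recursion. With $\alpha_j = 2/(j+2)$ one has $1-\alpha_j = j/(j+2)$ and the telescoping product $\prod_{j=i+1}^t (1-\alpha_j) = \tfrac{(i+1)(i+2)}{(t+1)(t+2)}$, which gives
\[
\varepsilon_t = \sum_{i=0}^t \frac{2(i+1)}{(t+1)(t+2)}\, a_i \;+\; \sum_{i=1}^t \frac{i(i+1)}{(t+1)(t+2)}\, b_i,
\]
expressing $\varepsilon_t$ as a single martingale sum. I then control the two weight families. For the $a_i$-terms, $\sum_{i=0}^t \bigl(\tfrac{2(i+1)}{(t+1)(t+2)}\bigr)^2 = O(1/(t+1))$ via the elementary estimate $\sum_{i\le t} i^2 \asymp t^3/3$. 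For the $b_i$-terms I would use $L_f$-smoothness of each $f(\cdot,\theta)$ and of $\nabla F$ (\cref{ass:f-smooth}) together with the movement bound $\|x_i - x_{i-1}\| = \alpha_{i-1}\|v_{i-1}-x_{i-1}\| \le 2D/(i+1)$, valid because $v_{i-1},x_{i-1}\in X$ and $\diam(X)=D$, to conclude $\|b_i\| \le 4L_fD/(i+1)$; hence each weighted increment is bounded in norm by $\tfrac{4L_fD\, i}{(t+1)(t+2)}$ and again $\sum_i \|\cdot\|^2 = O(1/(t+1))$.

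Finally I would invoke the vector-valued martingale concentration bound for STORM estimators, \citep[Lemma~4.1]{CaoEtAl2023}, whose hypotheses are precisely the sub-Gaussian noise of \cref{ass:sub-gaussian} together with the smoothness and iterate-movement estimates verified above. Feeding the weight-sum bounds into that lemma, the sub-Gaussian increments $a_i$ contribute the $\sigma_f$ scaling and the bounded increments $b_i$ the $L_fD$ scaling, yielding the claimed $\sqrt{\log(4d/\delta)}\,(t+1)^{-1/2}$ rate; the factor $4d$ inside the logarithm is the two-sided, $d$-coordinate tail factor carried by the inequality. The identical argument applied to $\widehat{\grad G}_t$ (which uses the independent samples $\{\xi_i\}$, with $L_g$ and $\sigma_g$ in place of $L_f$ and $\sigma_f$) gives the second inequality, and the two bounds can be made to hold jointly with probability at least $1-\delta$.

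The main obstacle is not the unrolling or the routine weight estimates but cleanly matching the \emph{mixed} martingale sum — sub-Gaussian increments $a_i$ alongside a.s.-bounded increments $b_i$ — to the hypotheses of the concentration lemma, and tracking the absolute constants so that the two scalings combine additively into the stated $4L_fD + 3\sigma_f$ (respectively $4L_gD + 3\sigma_g$).
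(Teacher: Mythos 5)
Your proposal is correct and takes essentially the same route as the paper: the paper's own proof simply defers to the STORM analysis of Cao et al.\ (their Lemmas~4.1 and~B.1), noting only that the bounds survive the modified stepsize $\alpha_t = 2/(t+2)$ (doubling the constants) and that the union bound is now over two events instead of three (changing $6$ to $4$ inside the logarithm) --- which is precisely the recursion $\varepsilon_t = (1-\alpha_t)\varepsilon_{t-1} + \alpha_t a_t + (1-\alpha_t) b_t$, the telescoping weights $\frac{(i+1)(i+2)}{(t+1)(t+2)}$, the movement bound $\|x_i - x_{i-1}\| \leq 2D/(i+1)$, and the joint $F$/$G$ union bound that you spell out explicitly. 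Your version just fills in the details the paper omits for brevity.
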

\begin{proof}[Proof of \cref{lemma:Sestimator}]
    The proof closely follows the argument in~\citep[Lemma 4.1]{CaoEtAl2023}. In particular, the bounds in~\citep[Lemma B.1]{CaoEtAl2023} still hold under the modified stepsize $\alpha_t = 2 / (t + 2)$, instead of $1 / (t + 1)$. The constant inside the logarithmic term changes from 6 to 4, as we now apply a union bound over two events rather than three. Additionally, the constants of the upper bounds is doubled due to use of the modified stepsize. We omit the details for brevity.
\end{proof}
\begin{proposition}\label{prop:combine-bound-IRSCG}
    Let $\{x_t\}_{t \geq 0}$ denote the iterates generated by \cref{alg:IR-SCG} with $\alpha_t = 2/(t+2)$ for any $t \geq 0$. If \cref{ass:smooth-functions} holds and the parameters $\{\sigma_t\}_{t \geq 0}$ are non-increasing and positive, then given $\delta \in (0,1)$, with probability at least $1-\delta$, for some absolute constant $c$ and any $t \geq 1$, we~have
    \begin{equation*}
        \begin{aligned}
        &\sum_{i \in [t]} i\|\widehat{\grad \Phi}_{i-1}-\grad \Phi_{i-1}(x_{i-1})\|\\
        &\leq c\left(4(\sigma_0 L_f+L_g)D+3(\sigma_0\sigma_f+\sigma_g)\right)(t+1)^{3/2}\sqrt{\log\left({8dt^2}/{\delta}\right)}.
        \end{aligned}
    \end{equation*}
\end{proposition}
\begin{proof}[Proof of \cref{prop:combine-bound-IRSCG}]
    Given $i \geq 1$ and $\delta \in (0,1)$, by \cref{lemma:Sestimator}, with probability at least $1- \delta/i(i+1)$, we have
    \begin{align*}
        i\|\widehat{\grad \Phi}_{i-1}-\grad \Phi_{i-1}(x_{i-1})\|
        &\leq i(\sigma_i \|\widehat{\grad F}_{i-1}-\nabla F(x_{i-1})\|+\|\widehat{\grad G}_{i-1}-\nabla G(x_{i-1})\|)\\
        &\leq i(\sigma_0 \|\widehat{\grad F}_{i-1}-\nabla F(x_{i-1})\|+\|\widehat{\grad G}_{i-1}-\nabla G(x_{i-1})\|)\\
        &\leq c\sqrt{2} \left(4(\sigma_0 L_f+L_g)D+ 3(\sigma_0\sigma_f+\sigma_g) \right) \sqrt{i\log\left({4di(i+1)}/{\delta}\right)}
    \end{align*}
    Thus, using the union bound, with probability at least $1-\delta\sum_{i \geq 1} 1/i(i+1) = 1-\delta$, for any $i \in [t]$, we have 
    \begin{align*}
        \sum_{i \in [t]} i\|\widehat{\grad \Phi}_{i-1}-\grad \Phi_{i-1}(x_{i-1})\|
        &\leq \sum_{i \in [t]} c\sqrt{2} \left( 4(\sigma_0 L_f+L_g)D + 3(\sigma_0\sigma_f+\sigma_g) \right) \sqrt{i \log\left({8dt^2}/{\delta}\right)} \\
        &\leq c\left(4(\sigma_0 L_f+L_g)D+3(\sigma_0\sigma_f+\sigma_g)\right)(t+1)^{3/2}\sqrt{\log\left({8dt^2}/{\delta}\right)},
    \end{align*}
    where the second inequality follows from bounding $ \sum_{i \in [t]} \sqrt{i} \leq \tfrac{2}{3} (t+1)^{3/2}$ via the Riemann sum approximation, and observing that $2 \sqrt{2} \leq 3$.
\end{proof}

To establish the desired convergence results for \cref{alg:IR-SCG}, we need to impose certain conditions on the regularization parameters $\{\sigma_t\}_{t \geq 0}$, stated below.

\begin{condition}\label{IRCG-C1} 
    The regularization parameters $\{\sigma_t\}_{t \geq 0}$ are non-increasing, positive, and converge to~$0$.\!\!\!\!\!
\end{condition}

\begin{condition} \label{IRCG-C2}
There exists $L \in \mathbb{R}$ such that
$$\lim_{t \to \infty} t\left(\frac{\sigma_t}{\sigma_{t+1}}-1\right)=L.$$
\end{condition}

\begin{condition} \label{IRCG-C3}
$(t+1) \sigma_{t+1}^2 > t\sigma_t^2$ for any $t \geq 0$, and $\log(t)= o(t\sigma_t^2)$ as $t \to \infty$.
\end{condition}

The following parameters will appear in our analysis and convergence rates
\begin{subequations}\label{eq:CV}
    \begin{align}
        C&:=\sup_{t \geq 1} \left\{ \left(F_{\opt} \!-\! \min_{x \in X} F(x)\right)\left(1 \!+\! \frac{\sum_{i \in [t]}(i \!+\! 1) i(\sigma_{i-1} \!-\! \sigma_i)}{(t+1)t\sigma_t}\right) \!+\! \frac{2(\sigma_0 L_f \!+\! L_g)D^2}{(t+1)\sigma_t}\right\}, \label{C}\\
        \bar{C}_{\delta}&:=\sup_{t \geq 1} \left\{ \frac{8c\left(4(\sigma_0 L_f+L_g)D^2+3(\sigma_0\sigma_f+\sigma_g)D\right)}{(t+1)^{1/2}\sigma_t}\sqrt{\log\left({8dt^2}/{\delta}\right)} \right\}, \label{Cdelta}\\
        C_\delta &:= \bar{C}_\delta + C, \\
        V&:= \sup_{t \geq 1} \left\{ \frac{\sum_{i \in [t]}(i+1)i(\sigma_{i-1}-\sigma_i)\sigma_i}{(t+1)t\sigma_t^2} \right\}. \label{V}
    \end{align}
\end{subequations}
While it is not obvious a priori, \cref{lemma:CV} in \cref{appendix:finite} guarantees that these quantities remain finite for any parameter choice satisfying \crefrange{IRCG-C1}{IRCG-C3}. In particular, \crefrange{IRCG-C1}{IRCG-C3} hold when $\sigma_t = \varsigma(t+1)^{-p}$ for any $p \in (0,1)$, which is what is used in \cref{theorem:Sconvergence-rate,theorem:FSconvergence-rate}, and \cref{lemma:Sexample-of-lambda} provides explicit bounds for this case. We now analyze the sequence $\{ x_t \}_{t \geq 0}$ in \cref{alg:IR-SCG} for the inner-level problem.
 
\begin{lemma} \label{lemma:lastiterate}
    Let $\{x_t\}_{t \geq 0}$ denote the iterates generated by \cref{alg:IR-SCG} with $\alpha_t = 2/(t+2)$ for any $t \geq 0$, and let $C_\delta$ be defined as in~\eqref{C}. If \cref{ass:smooth-functions} and \crefrange{IRCG-C1}{IRCG-C3} hold, with probability at least $1-\delta$, it holds that
    \begin{equation} \label{eq:Sbound-inner}
        G(x_t)-G_{\opt} \leq C_\delta\sigma_t,
    \end{equation}
    for any $t \geq 0$.
\end{lemma}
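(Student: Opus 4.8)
The plan is to combine the deterministic inequality of \cref{prop:bounds} with the high-probability control on the gradient-estimation error from \cref{prop:combine-bound-IRSCG}, and then to recognize the resulting expression as being dominated, term by term, by the constants defined in \eqref{eq:CV}. First I would apply \cref{prop:bounds} with $k = 0$. Since the prefactor $(k+1)k$ vanishes at $k=0$, the bound \eqref{eq:weighted-sum-g} collapses to
\begin{align*}
    G(x_t) - G_{\opt} &\leq \frac{2(\sigma_0 L_f + L_g)D^2}{t+1} + \frac{4D}{t(t+1)}\sum_{i \in [t]} i\,\|\widehat{\grad \Phi}_{i-1} - \grad \Phi_{i-1}(x_{i-1})\| \\
    &\quad + \left(F_{\opt} - \min_{x \in X} F(x)\right)\left(\sigma_t + \frac{\sum_{i \in [t]}(i+1)i(\sigma_{i-1}-\sigma_i)}{t(t+1)}\right),
\end{align*}
which holds deterministically for every $t \geq 1$.

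Next I would condition on the single good event of \cref{prop:combine-bound-IRSCG}, which holds with probability at least $1-\delta$ and bounds the noise sum $\sum_{i\in[t]} i\,\|\widehat{\grad \Phi}_{i-1} - \grad \Phi_{i-1}(x_{i-1})\|$ simultaneously for all $t \geq 1$. Substituting that bound into the middle term and dividing the whole inequality by $\sigma_t$, the right-hand side splits into two pieces. The first and third terms, once divided by $\sigma_t$, are precisely the expression whose supremum defines $C$ in \eqref{C}, hence are bounded by $C$. The noise term, after substitution and division by $\sigma_t$, equals
\[
\frac{4Dc\big(4(\sigma_0 L_f + L_g)D + 3(\sigma_0\sigma_f + \sigma_g)\big)(t+1)^{1/2}}{t\,\sigma_t}\sqrt{\log(8dt^2/\delta)}.
\]
To match this against $\bar C_\delta$ in \eqref{Cdelta}, I would invoke the elementary inequality $4(t+1)^{1/2}/t \leq 8/(t+1)^{1/2}$, equivalent to $4(t+1) \le 8t$ and therefore valid for all $t \geq 1$; this converts the factor $(t+1)^{1/2}/t$ into $2/(t+1)^{1/2}$ and shows the noise term is at most $\bar C_\delta$. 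Summing the two bounds gives $G(x_t) - G_{\opt} \leq (C + \bar C_\delta)\sigma_t = C_\delta \sigma_t$, as claimed.

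Finally I would dispatch the bookkeeping. The case $t = 0$ is excluded by \eqref{eq:weighted-sum-g} (its denominator vanishes) and is handled directly: $G(x_0) - G_{\opt}$ is a fixed finite quantity, and since $\sigma_0 > 0$ the claim at $t=0$ reduces to a finiteness statement about $C_\delta$. Finiteness of both $C$ and $\bar C_\delta$ under \crefrange{IRCG-C1}{IRCG-C3} is exactly the content of \cref{lemma:CV}, which I would cite rather than reprove. The main obstacle I anticipate is not any isolated hard estimate---both propositions do the heavy lifting---but rather the careful alignment of the powers of $t$ and of the problem constants so that the post-substitution expression is provably dominated by the somewhat intricate definitions of $C$ and $\bar C_\delta$; concretely, one must check that $4(t+1)\le 8t$ is exactly what absorbs the $(t+1)^{1/2}/t$ factor into the $(t+1)^{-1/2}$ appearing in \eqref{Cdelta}, and that the event on which the bound holds is the \emph{same} event for all $t$, so that the probability budget $\delta$ is spent only once rather than per iteration.
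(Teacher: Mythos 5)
Your proof follows the paper's argument exactly: apply \cref{prop:bounds} with $k=0$, substitute the uniform-in-$t$ high-probability bound of \cref{prop:combine-bound-IRSCG} on a single event, and verify after dividing by $\sigma_t$ that the resulting terms are dominated by $C$ and $\bar{C}_\delta$ from \eqref{eq:CV} --- your inequality $4(t+1)\leq 8t$, i.e.\ $4(t+1)^{1/2}/t \leq 8/(t+1)^{1/2}$ for $t\geq 1$, is precisely the ``straightforward calculation'' the paper leaves implicit. One caveat: your dispatch of the $t=0$ case is not valid as stated, since finiteness of $C_\delta$ alone does not imply $G(x_0)-G_{\opt}\leq C_\delta\sigma_0$; however, the paper's own proof likewise establishes the bound only for $t\geq 1$, so this reflects a slip in the lemma statement rather than a gap in your argument relative to the paper's.
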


\begin{proof}[Proof of \cref{lemma:lastiterate}]
    Combining \eqref{eq:weighted-sum-g} from \cref{prop:bounds} with the probabilistic bound in \cref{prop:combine-bound-IRSCG}, and after some straightforward calculations, with probability at least $1-\delta$, we have
    \begin{align*}
        G(x_t)-G_{\opt}
        &\leq   \frac{8c\left(4(\sigma_0 L_f+L_g)D^2+3(\sigma_0\sigma_f+\sigma_g)D\right)}{(t+1)^{1/2}}\sqrt{\log\left({8dt^2}/{\delta}\right)}\\
        &\quad+\frac{2(\sigma_0 L_f+L_g)D^2}{(t+1)}
        \!+\! \left(F_{\opt} \!-\! \min_{x \in X} F(x)\right) \left( \sigma_t \!+\! \frac{\sum_{i \in [t]} (i+1)i(\sigma_{i-1} \!-\! \sigma_i)}{(t+1)t} \right).
    \end{align*}
    By definition of $C_\delta$, the right-hand side is at most $C_\delta \sigma_t$ for any $t \geq 1$. This completes the proof.
\end{proof}

We next analyze the convergence of the sequence $\{ z_t \}_{t \geq 0}$ in \cref{alg:IR-SCG} for both outer- and inner-level problems.

\begin{lemma} \label{lemma:twoupperbounds}
    Let $\{z_t\}_{t \geq 1}$ denote the iterates generated by \cref{alg:IR-SCG} with $\alpha_t = 2/(t+2)$ for any $t \geq 0$, and let $C_\delta,V$ be defined as in \cref{eq:CV}. If \cref{ass:smooth-functions} and \crefrange{IRCG-C1}{IRCG-C3} hold, then with probability at least $1-\delta$, it (jointly) holds that
    \begin{align*}
        F(z_t) - F_{\opt} 
        &\leq  \frac{2(\sigma_0 L_f+L_g)D^2}{(t+1)\sigma_t} +
        \frac{4c\left(4(\sigma_0 L_f+L_g)D+3(\sigma_0\sigma_f+\sigma_g)\right)}{(t+1)^{1/2}\sigma_t}\sqrt{\log\left({8dt^2}/{\delta}\right)},\\
        G(z_t)-G_{\opt} &\leq C_\delta(1+V) \sigma_t,
    \end{align*}
    for any $t \geq 1$.
\end{lemma}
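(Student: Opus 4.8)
The plan is to exploit the fact that, by construction in \cref{alg:IR-SCG}, $z_t$ is a convex combination of the iterates $x_1,\dots,x_t$: it places weight $(t+1)t\sigma_t$ on $x_t$ and weight $(i+1)i(\sigma_{i-1}-\sigma_i)$ on each $x_i$, normalized by $S_t := (t+1)t\sigma_t + \sum_{i\in[t]}(i+1)i(\sigma_{i-1}-\sigma_i)$. These weights are nonnegative because $\sigma_t>0$ and, by \cref{IRCG-C1}, $\{\sigma_t\}$ is non-increasing so $\sigma_{i-1}-\sigma_i\ge 0$. Both displayed inequalities will then follow from Jensen's inequality applied to the convex functions $F$ and $G$, combined with the single elementary estimate $S_t \ge (t+1)t\sigma_t$ obtained by discarding the nonnegative sum. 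Throughout I work on the probability-$(1-\delta)$ event of \cref{prop:combine-bound-IRSCG}; since both \cref{lemma:lastiterate} and the inequality \eqref{eq:weighted-sum-f} invoked below are derived on exactly this event, the two conclusions hold \emph{jointly}.

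For the outer bound, convexity of $F$ gives $F(z_t)-F_{\opt} \le \tfrac{1}{S_t}\big[(t+1)t\sigma_t(F(x_t)-F_{\opt}) + \sum_{i\in[t]}(i+1)i(\sigma_{i-1}-\sigma_i)(F(x_i)-F_{\opt})\big]$. Crucially, the individual terms $F(x_i)-F_{\opt}$ need not be nonnegative, so I do not bound them one by one; instead I observe that the bracketed numerator is \emph{exactly} the left-hand side of \eqref{eq:weighted-sum-f} in \cref{prop:bounds} with $k=0$, and bound it by $2(\sigma_0 L_f+L_g)D^2 t + \sum_{i\in[t]}4iD\|\widehat{\grad\Phi}_{i-1}-\grad\Phi_{i-1}(x_{i-1})\|$, controlling the stochastic sum by \cref{prop:combine-bound-IRSCG}. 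Dividing this nonnegative upper bound by $S_t \ge (t+1)t\sigma_t$, the deterministic part yields $\tfrac{2(\sigma_0 L_f+L_g)D^2 t}{(t+1)t\sigma_t} = \tfrac{2(\sigma_0 L_f+L_g)D^2}{(t+1)\sigma_t}$, while the stochastic part contributes a term proportional to $\tfrac{(t+1)^{3/2}}{(t+1)t\sigma_t}\sqrt{\log(8dt^2/\delta)}$; collapsing the power of $t$ via $(t+1)^{1/2}/t \le 2/(t+1)^{1/2}$ for $t\ge1$ produces the claimed $(t+1)^{-1/2}\sigma_t^{-1}$ behaviour.

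For the inner bound, convexity of $G$ gives the analogous average $G(z_t)-G_{\opt} \le \tfrac{1}{S_t}\big[(t+1)t\sigma_t(G(x_t)-G_{\opt}) + \sum_{i\in[t]}(i+1)i(\sigma_{i-1}-\sigma_i)(G(x_i)-G_{\opt})\big]$. Here, because every summand is nonnegative by feasibility, I may substitute the per-iterate estimate $G(x_i)-G_{\opt}\le C_\delta\sigma_i$ from \cref{lemma:lastiterate}, valid for all $i$ on the same event. Factoring out $C_\delta$ and again using $S_t\ge(t+1)t\sigma_t$ in the denominator reduces the bound to $C_\delta\big(\sigma_t + \tfrac{\sum_{i\in[t]}(i+1)i(\sigma_{i-1}-\sigma_i)\sigma_i}{(t+1)t\sigma_t}\big)$. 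Recognizing the second summand as $\sigma_t$ times the expression inside the supremum defining $V$ in \eqref{V}, this is at most $C_\delta(\sigma_t + V\sigma_t) = C_\delta(1+V)\sigma_t$; finiteness of $V$ under \crefrange{IRCG-C1}{IRCG-C3} is supplied by \cref{lemma:CV}.

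The Jensen steps and substitutions are routine. The only real bookkeeping obstacle lies in the outer bound: tracking the constants and powers of $t$ through the division by $S_t$, in particular verifying that $(t+1)^{3/2}/((t+1)t)$ collapses to the stated $(t+1)^{-1/2}$ rate after the elementary inequality $(t+1)/t\le 2$. For the inner bound, the subtlety is simply that the quantity $V$ in \eqref{V} has been defined precisely so as to match the weighted average arising here, so that no hypotheses beyond \crefrange{IRCG-C1}{IRCG-C3} are required.
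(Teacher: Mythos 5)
Your proposal is correct and follows essentially the same route as the paper's own proof: Jensen's inequality applied to the weighted sum on the left-hand side of \eqref{eq:weighted-sum-f} with $k=0$, combined with \cref{prop:combine-bound-IRSCG} and the bound $S_t \geq (t+1)t\sigma_t$ for the outer objective, and Jensen plus the per-iterate estimate of \cref{lemma:lastiterate} together with the definition of $V$ for the inner objective. The only remark worth making is that your arithmetic (like the paper's own proof display) produces a leading constant $8c$ rather than the $4c$ appearing in the lemma statement---a constant-factor inconsistency internal to the paper, not a defect of your argument---and your explicit observation that both bounds are established on the single probability-$(1-\delta)$ event of \cref{prop:combine-bound-IRSCG} justifies the ``jointly'' claim more carefully than the paper's implicit treatment.
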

\begin{proof} [Proof of \cref{lemma:twoupperbounds}] 
    Recall from \cref{alg:IR-SCG} that we have defined
    \[ \begin{cases}
        S_{t+1} := (t+2)(t+1)\sigma_{t+1} + \sum_{i\in [t+1]}(i+1)i(\sigma_{i-1}-\sigma_i) \\[2ex]
        z_{t+1} := \frac{1}{S_{t+1}} \left( (t+2)(t+1)\sigma_{t+1} x_{t+1}+ \sum_{i\in [t+1]}(i+1)i(\sigma_{i-1}-\sigma_i)x_i \right),
    \end{cases} \]
    thus $z_{t}$ is simply a convex combination of $x_0, \dots, x_{t}$ for every $t \geq 1$. Therefore, as $F$ is convex, we can apply Jensen's inequality to the left-hand side of the inequality~\eqref{eq:weighted-sum-f} with $k=0$, and after some tedious calculation, we arrive at
    \begin{align*}
        F(z_t) - F_{\opt} 
        &\leq \frac{2(\sigma_0 L_f+L_g)D^2 t}{S_t}
        + \sum_{i \in [t]} \frac{4iD \|\widehat{\grad \Phi}_{i-1}-\grad \Phi_{i-1}(x_{i-1})\|}{S_t}.
    \end{align*}
    Using the inequality $S_t \geq (t+1) t \sigma_t$, which holds thanks to \cref{IRCG-C1}, and applying \cref{prop:combine-bound-IRSCG}, with probability at least $1 - \delta$, we have
    \begin{align*}
        F(z_t) - F_{\opt} 
        &\leq \frac{2(\sigma_0 L_f+L_g)D^2}{(t+1)\sigma_t}+
        \frac{8c\left(4(\sigma_0 L_f+L_g)D+3(\sigma_0\sigma_f+\sigma_g)\right)}{(t+1)^{1/2}\sigma_t}\sqrt{\log\left({8dt^2}/{\delta}\right)}.
    \end{align*}
    This completes the proof of the first claim. 

    For the second claim, we follow the same procedure. In particular, applying Jensen's inequality with respect to the convex function $G$ to the left-hand side of \eqref{eq:Sbound-inner} and using the inequality $S_t \geq (t + 1) t \sigma_t$, with probability at least $1 - \delta$, we have
    \[ G(z_t) - G_{\opt} \leq \frac{C_\delta}{t(t+1)\sigma_t} \left((t+1)t\sigma_t^2+ \sum_{i \in [t]}(i+1)i(\sigma_{i-1}-\sigma_i)\sigma_i\right). \]
    The proof concludes by using the definition of $V$.
\end{proof}

\cref{lemma:twoupperbounds} establishes a convergence result in terms of the regularization parameters $\{\sigma_t\}_{t \geq 0}$. The next lemma specifies an update rule for these parameters and provides bounds on the quantities introduced in~\cref{eq:CV}.  

\begin{lemma} \label{lemma:Sexample-of-lambda}
    Consider the sequence $\sigma_t := \varsigma (t+1)^{-p}$ for $t \geq 0$ and the quantities defined in~\cref{eq:CV}.
    \begin{itemize}
        \item [(i)] If $p \in (0, 1)$, then $\{ \sigma_t \}_{t \geq 0} $ satisfies \cref{IRCG-C1} and \cref{IRCG-C2} with $L=p$. Furthermore, 
        \[C \leq (1+2p) \left(F_{\opt}-\min_{x \in X} F(x)\right) + \frac{2(\varsigma L_f +L_g)D^2}{\varsigma}, \quad V \leq \frac{2p}{\min\{1,2(1-p)\}}. \]
        \item [(ii)] If $p \in (0, 1/2)$, then $\{ \sigma_t \}_{t \geq 0} $ satisfies \cref{IRCG-C3}, and we have
        \[ \bar{C}_\delta \leq4c\left( 8 \left(L_f+\frac{L_g}{\varsigma}\right)D^2+3\left(\sigma_f+\frac{\sigma_g}{\varsigma}\right)D\right) \sqrt{\frac{2}{(1-2p)e}+\log\left({8d}/{\delta}\right)}. \]
    \end{itemize}
\end{lemma}
\begin{proof} [Proof of \cref{lemma:Sexample-of-lambda}]
    As for assertion (i), it is trivial to see that the sequence $\{\sigma_t\}_{t \geq 0}$ satisfies Conditions \ref{IRCG-C1}. To validate \cref{IRCG-C2}, observe that
    \begin{align*}
        \lim_{t \to \infty} t\left(\frac{\sigma_t}{\sigma_{t+1}}-1\right)=\lim_{t \to \infty} t\left(\left(1+\frac{1}{t+1}\right)^p-1\right) = \lim_{t \to \infty} \frac{t}{t+1}\cdot \lim_{\delta\to 0}\frac{(1+\delta)^p-1}{\delta}=p,
    \end{align*}
    where the last equality holds as the derivative of $y^p$ at $y=1$ equals $p$. We next establish the bound for $C$. For $t>1$, it follows from the mean value theorem that there exists $b_t \in (t,t+1)$ 
    such~that
    $$t^{-p}-(t+1)^{-p} = -pb_t^{-(p+1)}(t-t-1) = pb_t^{-(p+1)} \leq pt^{-(p+1)}.$$
    Hence, we observe that
    \begin{align*}
        \textstyle \sum_{i \in [t]} (i+1)i(\sigma_{i-1}-\sigma_i) \leq 2 \varsigma p \sum_{i \in [t]} i^{1-p} \leq {2 \varsigma p}t^{2-p},
    \end{align*}
    where the first inequality is due to $t+1\leq 2t$ and the second inequality holds because $t^{1-p}$ is an increasing function in $t$. This implies 
    \[ \frac{\sum_{i \in [t]} (i+1)i(\sigma_{i-1}-\sigma_i)}{(t+1)t\sigma_t} \leq \frac{2p t^{2-p}}{t(t+1)^{1-p}} \leq 2p. \]
    Since $\min_{t \geq 0} (t+1)\sigma_t = \varsigma$, we obtain
    $$C \leq (1+2p)\left(F_{\opt}-\min_{x \in X} F(x)\right) + \frac{2(\varsigma L_f +L_g)D^2}{\varsigma}.$$
    We next establish the bound for $V$. Using similar arguments, we observe that
    \begin{align*}
        \textstyle \sum_{i \in [t]}(i+1)i(\sigma_{i-1}-\sigma_i)\sigma_i \leq 2\varsigma^2 p \sum_{i \in [t]} i^{1-2p}.
    \end{align*}
    If $1-2p \geq 0$, then $t^{1-2p}$ is an increasing function in $t$ and thus
    $$ \textstyle \sum_{i \in [t]}(i+1)i(\sigma_{i-1}-\sigma_i)\sigma_i \leq 2p \varsigma^2 t^{2(1-p)}.$$
    Dividing both sides by $t(t+1) \sigma_t^2$, we deduce that $V \leq 2p$.  
    When $1-2p <0$, we have 
    \begin{align*}
        \textstyle \sum_{i \in [t]}(i+1)i(\sigma_{i-1}-\sigma_i)\sigma_i \leq 2p\varsigma^2\left(1+\frac{1}{2-2p}\left(t^{2-2p} -1\right)\right)\leq  \varsigma^2\frac{p}{1-p} t^{2(1-p)},
    \end{align*}
    where the first inequality is due to the Riemann sum approximation $\sum_{i \in [t]} i^{1-2p} \!\leq\! 1 \!+\! \int_{i=1}^t i^{1-2p} \mathrm{d}i$.
    Dividing both sides by $t(t+1) \sigma_t^2$, we deduce that $V \leq {p}/({1-p})$, thus the claim follows.
    
    As for assertion (ii), it is trivial to see that \cref{IRCG-C3} holds. We thus focus on bounding $\bar C_\delta$.  Using the fact that
    $ \sup_{t \geq 1} {\log(t)}/{t} = {1}/{e},$
    for any $t \geq 1$, we may conclude that 
    \begin{align*}
        &\frac{8c\left(4(\sigma_0 L_f+L_g)D^2+3(\sigma_0\sigma_f+\sigma_g)D\right)}{(t+1)^{1/2}\sigma_t}\sqrt{\log\left({8dt^2}/{\delta}\right)}\\
        &=8c\left(4\left(L_f+\frac{L_g}{\varsigma}\right)D^2 \!+\! 3\left(\sigma_f \!+\! \frac{\sigma_g}{\varsigma}\right)D\right) \sqrt{\frac{1}{(t+1)^{1-2p}}\left(\frac{2}{1-2p}\log\left(t^{1-2p}\right) \!+\! \log\left({8d}/{\delta}\right)\right)}\\
        &\leq 8c\left(4\left(L_f +\frac{L_g}{\varsigma}\right)D^2+3\left(\sigma_f+\frac{\sigma_g}{\varsigma}\right)D\right) \sqrt{\frac{2}{(1-2p)e}+\log\left({8d}/{\delta}\right)}.
    \end{align*}
    The claim then follows from the definition of $\bar C_\delta$.
\end{proof}

\begin{proof}[Proof of \cref{theorem:Sconvergence-rate}]
    The first claim on the non-asymptotic convergence guarantee follows directly from \cref{lemma:twoupperbounds,lemma:Sexample-of-lambda}. As for the second claim on the asymptotic convergence, we prove a more general result: even if $\sigma_t$ is not set as $\varsigma (t+1)^{-p}$, the iterates of \cref{alg:IR-SCG} still converge almost surely, provided the regularization sequence satisfies the conditions in \cref{lemma:lastiterate}.
    
    \cref{lemma:lastiterate} implies that $\lim_{t \to \infty} G(x_t)=G_{\opt}$ with probability at least $1-\delta$. As $\delta$ can be arbitrarily small, we may conclude that $\lim_{t \to \infty } G(x_t) =G_{\opt}$, almost surely. This implies that any limit point of $\{x_t\}_{t \geq 0}$ is in $X_{\opt}$ almost surely. Since $F$ is convex, hence lower semi-continuous over $X$, and by definition of $F_{\opt}$, we have $\liminf_{t \to \infty} F(x_t) \geq F_{\opt}$ almost~surely.
    
    Besides, by combining \cref{prop:bounds,prop:combine-bound-IRSCG}, we deduce that for any $\delta \in (0,1)$, with probability at least $1-\delta$, we have, for all $t \geq 1$,
    \begin{align*}
        F(x_t)-F_{\opt}&\leq \frac{4c\left(4(\sigma_0 L_f+L_g)D^2+3(\sigma_0\sigma_f+\sigma_g)D\right)}{(t+1)^{1/2}\sigma_t}\sqrt{\log\left({8dt^2}/{\delta}\right)}\\
        &\quad+\frac{2(\sigma_0 L_f+L_g)D^2}{(t+1)\sigma_t}+ \frac{ \sum_{i \in [t]}(i+1)i(\sigma_{i-1}-\sigma_i)(F_{\opt}-F(x_i))}{t(t+1)\sigma_t}\\
        &\leq \frac{4c\left(4(\sigma_0 L_f+L_g)D^2+3(\sigma_0\sigma_f+\sigma_g)D\right)}{(t+1)^{1/2}\sigma_t}\sqrt{\log\left({8dt^2}/{\delta}\right)}\\
        &\quad+\frac{2(\sigma_0 L_f+L_g)D^2}{(t+1)\sigma_t}+ \frac{ \sum_{i \in [t]}(i+1)i(\sigma_{i-1}-\sigma_i)\max\{F_{\opt}-F(x_i),0\}}{t(t+1)\sigma_t},
    \end{align*}
    where the second inequality holds due to \cref{IRCG-C1}. We claim that all terms on the right hand side converge to $0$ as $t \to \infty$. This is obvious except for the last term.
    
    Recall that almost surely $\liminf_{t\to \infty} F(x_t) \geq F_{\opt}$. Therefore we have $\limsup_{t \to \infty} (F_{\opt} - F(x_t)) \leq 0$ and hence $\lim_{t \to \infty} \max\{F_{\opt}-F(x_t),0\} = 0$ almost surely. Applying the Stolz–Ces\`{a}ro theorem and \eqref{eq:L/(2-L)} leads to
    $$\lim_{t \to \infty}\frac{ \sum_{i \in [t]}(i+1)i(\sigma_{i-1}-\sigma_i)\max\{F_{\opt}-F(x_i),0\}}{t(t+1)\sigma_t} = 0,$$
    almost surely. Then each term on the right hand side of the inequality converges to $0$ as $t \to \infty$. It follows that with probability at least $1-\delta$,
    \begin{align*}
    \limsup_{t \to \infty} (F(x_t)-F_{\opt}) &\leq 0.
    \end{align*}
    Thus, since this holds for any $\delta \in (0,1)$, it holds almost surely. In conclusion, we have shown that $\limsup_{t \to \infty} (F(x_t)-F_{\opt}) \leq 0$ and $\liminf_{t \to \infty} (F(x_t)-F_{\opt}) \geq 0$, which implies that $\lim_{t \to \infty} F(x_t) = F_{\opt}$ almost surely. This completes the proof.
\end{proof}

\subsection{\texorpdfstring{Proof of \cref{theorem:nonconvex-stochastic-convergence}}{Proof of Theorem 2}}
We restate \cref{theorem:nonconvex-stochastic-convergence} with explicit upper bounds on the stationary gaps and for a more general choice of $p$ and $\omega$. Toward the end of the proof, we show that the optimal parameters are $p=2/7$ and $\omega=6/7$.

\begin{namedtheorem}[\ref{theorem:nonconvex-stochastic-convergence}]
    Let $\{x_t\}_{t \geq 0}$ denote the iterates generated by \cref{alg:IR-SCG} with stepsizes $\alpha_t = (t + 1)^{-\omega}$ and regularization parameters $\sigma_t = \varsigma(t + 1)^{-p}$ for any $t \geq 0$.
    If Assumption~\ref{ass:smooth-functions} holds with the exception that $F$ may be nonconvex, then with probability at least $1-\delta$, for every $t \geq 0$, it (jointly) holds that
     \begin{align*}
            &\frac{\sum_{i = 0}^{t}(i+1)^{-p}\CF(x_i)}{\sum_{i = 0}^{t} (i+1)^{-p}}\\
            &\leq 4(1-p)\sup_{z \in X}|F(z)| (t+1)^{\omega-1}+2(1-p) F(x_0) (t+1)^{p-1}  +\tfrac{2(1-p)(\varsigma L_f+L_g)D^2 }{\varsigma(1-\omega)} (t+1)^{p-\omega}\\
            &+\tfrac{4c(1-p)\sqrt{2}(2(\varsigma L_f+L_g)D+\frac{3^\omega}{3^\omega -1}(\varsigma\sigma_f+\sigma_g))D}{\varsigma(1-\omega/2)} (t+1)^{p-\omega/2} \sqrt{\log \left({4d(t+2)^2}/{\delta}\right)},
    \end{align*}
    and 
    \begin{align*}
        \CG(x_t) 
        &\leq \varsigma \sup_{z \in X}|F(z)| \left( 2t^{-p} + \frac{t^{1-p-w}}{1-p} \right) + \varsigma F(x_0) t^{-\omega} + \frac{(\varsigma L_f+L_g)D^2 }{1-\omega} t^{1-2\omega}\\
        &\quad + \frac{2c\sqrt{2}(2(\varsigma L_f+L_g)D+\frac{3^\omega}{3^\omega -1}(\varsigma\sigma_f+\sigma_g))D}{1-\omega/2} t^{1-3\omega/2} \sqrt{\log \left({4d(t+q)^2}/{\delta}\right)},
    \end{align*}
    where $c$ is some absolute constant.
\end{namedtheorem}

\begin{proof}[Proof of \cref{theorem:nonconvex-stochastic-convergence}]
    Following the proof of \citep[Lemma 4.1]{CaoEtAl2023}, for any $i \geq 0$, one can easily show that, with probability at least $1-\eta_i$, we have
    \begin{align*}
        \|\widehat{\nabla \Phi}_i - \nabla \Phi_i(x_i)\| \leq c\sqrt{2}\left(2(\sigma_i L_f+L_g)D+\frac{3^\omega}{3^\omega -1}(\sigma_i\sigma_f+\sigma_g)\right)(i+1)^{-\omega/2}\sqrt{\log \left(\frac{4d}{\eta_i}\right)}.
    \end{align*}
    Setting $\eta_i = \frac{\delta}{(i+1)(i+2)}$, applying a union bound, and using the fact that $\sigma_i \leq \sigma_0 = \varsigma$ we have that with probability at least $1-\delta\sum_{i =0}^{t}\frac{1}{(i+1)(i+2)} \geq 1-\delta$, the following holds:
    \begin{align*}
        &\sum_{i = 0}^{t}   \left(2 D \|\widehat{\grad \Phi}_i-\grad \Phi_i(x_i)\| + \frac{(\sigma_i L_f + L_g) D^2}{2(i+1)^{\omega}}\right)\\
        &\leq  \sum_{i = 0}^{t}   \left(\frac{2 \sqrt{2} c (2(\varsigma L_f+L_g)D+\frac{3^\omega}{3^\omega -1}(\sigma_i\sigma_f+\sigma_g))D}{(i+1)^{\omega/2}}\sqrt{\log \left(\frac{4d(t+2)^2}{\delta}\right)}+\frac{(\varsigma L_f+L_g)}{2(i+1)^{\omega}}D^2\right)\\
         &\leq \frac{2 \sqrt{2} c (2(\varsigma L_f+L_g)D+\frac{3^\omega}{3^\omega -1}(\varsigma\sigma_f+\sigma_g))D}{1-\omega/2} (t+1)^{1-\omega/2} \sqrt{\log \left(\frac{4d(t+2)^2}{\delta}\right)} \\
         &\quad + \frac{(\varsigma L_f+L_g)D^2 }{1-\omega} (t+1)^{1-\omega}.
    \end{align*}
    Applying \cref{lemma:nonconvex-recursive-rule} with $k=0$, with probability at least $1 - \delta$, we thus have
    \begin{align}
    \label{eq:CG:bound}
    \begin{aligned}
        \CG(x_{t+1}) &\!\leq\! 2\varsigma\sup_{z \in X}|F(z)| (t \!+\! 1)^{-p} \!+\! \varsigma F(x_0) \!-\! \sum_{i = 0}^{t} \sigma_i \CF(x_i) (t \!+\! 1)^{-\omega} \!+\! \tfrac{(\varsigma L_f \!+\! L_g)D^2 }{1-\omega} (t \!+\! 1)^{1 \!-\! 2\omega}\\
        &\quad \!+\! \tfrac{2 \sqrt{2}c (2(\varsigma L_f \!+\! L_g) D \!+\! \frac{3^\omega}{3^\omega \!-\!1}(\varsigma \sigma_f \!+\! \sigma_g))D}{1 \!-\! \omega/2} (t \!+\! 1)^{1 \!-\! 3\omega/2} \sqrt{\log \left(\tfrac{4d(t \!+\! 2)^2}{\delta}\right)}  
    \end{aligned}
    \end{align}
    Note that the maximum in the definition of function $\CF$ is with respect to $X_{\opt}$, not $X$. Therefore, $\CF(x_i)$ can take both positive and negative values since $x_i \in X$. However, we can derive the following bound
    \begin{align*}
        -\sum_{i = 0}^{t} \sigma_i\CF(x_i) \leq \sup_{z \in X} |\CF(z)|\sum_{i = 0}^{t} \sigma_i \leq \varsigma\sup_{z \in X} |\CF(z)| \frac{(t+1)^{1-p}}{1-p},
    \end{align*}
    which further implies that
   \begin{align}
    \CG(x_{t})
    &\leq \varsigma \sup_{z \in X}|F(z)| \left( 2t^{-p} + \frac{t^{1-p-w}}{1-p} \right) + \varsigma F(x_0) t^{-\omega} + \frac{(\varsigma L_f+L_g)D^2 }{1-\omega} t^{1-2\omega} \nonumber \\
    &\quad + \frac{2c\sqrt{2}(2(\varsigma L_f+L_g)D+\frac{3^\omega}{3^\omega -1}(\varsigma\sigma_f+\sigma_g))D}{1-\omega/2} t^{1-3\omega/2} \sqrt{\log \left(\frac{4d(t+q)^2}{\delta}\right)}. \label{eq:CG}
\end{align}
    
    We next focus on the upper-level problem. Since $\CG(x_{t+1}) \geq 0$, using~\cref{eq:CG:bound}, we have
    \begin{align*}
        \sum_{i = 0}^{t} \sigma_i\CF(x_i) 
        &\leq 2\varsigma\sup_{z \in X}|F(z)| (t+1)^{\omega-p}+\varsigma F(x_0) + \frac{(\varsigma L_f+L_g)D^2 }{1-\omega} (t+1)^{1-\omega} \\
        &\quad +\frac{2c\sqrt{2}(2(\varsigma L_f+L_g)D+\frac{3^\omega}{3^\omega -1}(\varsigma\sigma_f+\sigma_g))D}{1-\omega/2} (t+1)^{1-\omega/2} \sqrt{\log \left(\frac{4d(t+2)^2}{\delta}\right)}.
    \end{align*}
    Dividing both sides by $\sum_{i =0}^{t} \sigma_i$, and furthermore exploiting the bounds
    $$\sum_{i =0}^{t} \sigma_i \geq \varsigma \frac{(t+2)^{1-p}-1}{1-p} \geq \varsigma\frac{(t+1)^{1-p}}{2(1-p)},$$ we arrive at
    \begin{equation}\label{eq:CF}
        \begin{aligned} 
        \frac{\sum_{i = 0}^{t}\sigma_i\CF(x_i)}{\sum_{i = 0}^{t} \sigma_i}
        &\leq 4(1-p)\sup_{z \in X}|F(z)| (t+1)^{\omega-1}+2(1-p) F(x_0) (t+1)^{p-1} \\
        &\quad +\frac{2(1-p)(\varsigma L_f+L_g)D^2 }{\varsigma(1-\omega)} (t+1)^{p-\omega}\\
        &\quad+\tfrac{4c(1-p)\sqrt{2}(2(\varsigma L_f+L_g)D+\frac{3^\omega}{3^\omega -1}(\varsigma\sigma_f+\sigma_g))D}{\varsigma(1-\omega/2)} (t+1)^{p-\omega/2} \sqrt{\log \left(\frac{4d(t+2)^2}{\delta}\right)}.
    \end{aligned}
    \end{equation}

    Both bounds involving $\CG$ and $\CF$ hold with probability $\geq 1-\delta$. 
    Hence, the first claim on the non-asymptotic convergence guarantee follows. Moreover, by optimizing over the parameters $p$ and $\omega$, we aim to ensure that the right-hand sides of both bounds converge to $0$. To achieve this, we select $p$ and $\omega$ to minimize the slowest rate with respect to $t$:
    \[ \min_{p,w : 0 < p \leq \omega} \max\left\{ -p, 1-p-\omega, -\omega, 1-2\omega, 1-3\omega/2, \omega-1, p-1, p-\omega, p-\omega/2 \right\} = -\frac{1}{7} \]
    which is realized by setting $p=2/7$, $\omega=6/7$ as required. Substituting these values yields the bound presented in the main body of the paper.

    As for the second claim on asymptotic convergence, we prove a more general result: If 
    $$\max\left\{ -p, 1-p-\omega, -\omega, 1-2\omega, 1-3\omega/2, \omega-1, p-1, p-\omega, p-\omega/2 \right\} < 0,$$
    then it holds that
    $$\liminf_{t \to \infty} \CF(x_t) = \lim_{t \to \infty} \CG(x_t) =  0.$$
    Under this additional assumption and the fact that $\CG(x_t) \geq 0$, we deduce that $\lim_{t \to \infty} \CG(x_t) = 0$ from taking the limit on both sides of \eqref{eq:CG}. From the continuity of $G$, we deduce that any limit point of $\{x_t\}_{t\geq 0}$ is in $X_{\opt}$.  the continuity of $\CF$ from \cref{lemma:CF-lipschitz} and the fact that $\CF(x) \geq 0$ for any $x \in X_{\opt}$, we deduce that $\liminf_{t \to \infty} \CF(x_t) \geq 0$. Recall that
    $$\liminf_{t \to \infty} \CF(x_t) = \lim_{t \to \infty} \inf_{k \geq t} \CF(x_k),$$
    and
    $$\frac{\sum_{i = 0}^{t}\sigma_i\CF(x_i)}{\sum_{i = 0}^{t} \sigma_i} \geq \frac{\sum_{i = 0}^{t}\sigma_i \inf_{k \geq i}\CF(x_k)}{\sum_{i = 0}^{t} \sigma_i}.$$
    Combining these observations and \eqref{eq:CF}, we have
     \begin{align*}
        &\frac{\sum_{i = 0}^{t}\sigma_i \inf_{k \geq i}\CF(x_k)}{\sum_{i = 0}^{t} \sigma_i}\\
        &\leq 4(1-p)\sup_{z \in X}|F(z)| (t+1)^{\omega-1}+2(1-p) F(x_0) (t+1)^{p-1} \\
        &\quad +\frac{2(1-p)(\varsigma L_f+L_g)D^2 }{\varsigma(1-\omega)} (t+1)^{p-\omega}\\
        &\quad+\tfrac{4c(1-p)\sqrt{2}(2(\varsigma L_f+L_g)D+\frac{3^\omega}{3^\omega -1}(\varsigma\sigma_f+\sigma_g))D}{\varsigma(1-\omega/2)} (t+1)^{p-\omega/2} \sqrt{\log \left(\frac{4d(t+2)^2}{\delta}\right)}.
    \end{align*}
    By the Stolz–Ces\`{a}ro theorem (since $p \in (0,1)$, $\sum_{t \geq 0} \sigma_t = \infty$) and taking the limit on both sides of the above inequality, we obtain $\liminf_{t \to \infty} \CF(x_t) \leq 0$. Since this holds with probability at least $1-\delta$ for any $\delta \in (0,1)$, it should hold almost surely. Thus, we conclude the proof.
\end{proof}

\section{Proofs of Section~\ref{sec:finite-sum}}
\label{appendix:finite-sum}

\subsection{\texorpdfstring{Proof of \cref{theorem:FSconvergence-rate}}{Proof of Theorem 3}}

We restate \cref{theorem:FSconvergence-rate} with exact upper bounds on sub-optimality gaps.

\begin{namedtheorem}[~\ref{theorem:FSconvergence-rate}]
    Suppose the expectations defining $F$ and $G$ in \cref{eq:stochastic-bilevel} are from uniform distributions over finite sets of size $[n]$. Let $\{z_t\}_{t \geq 1}$ be the iterates generated by \cref{alg:IR-FSCG} with $\alpha_t = \log(q)/q$ for $0 \leq t < q$, $\alpha_t = 2/(t+2)$ for any $t \geq q$, and $\sigma_t := \varsigma (\max\{t,q\}+1)^{-p}$ for some chosen $\varsigma > 0$, $p \in (0,1)$ and $S=q$. Under \cref{ass:smooth-functions}, with probability at least $1-\delta$, for any $t \geq q$, it holds that
    \begin{align*}
        &F(z_t) - F_{\text{opt}} \\
        &\leq \frac{\max\left\{F(x_0)-F_{\text{opt}},0\right\}+(G(x_0)-G_{\text{opt}})/\varsigma+(L_f +L_g/\varsigma)D^2\left(\frac{1}{2}+16\sqrt{\log\left(\frac{16t^2}{\delta}\right)}\right) \log(q)}{(q+1)^{-1}(t+1)^{1-p}t}\\
        &\quad +\frac{2(L_f+L_g/\varsigma)D^2(t-q)+ \left(8\sqrt{2}t-4q\right)(L_f+L_g/\varsigma)D^2  \sqrt{\log\left(\frac{16t^2}{\delta}\right)}}{(t+1)^{1-p}t},\\
        &G(z_t)-G_{\text{opt}} \\
        &\leq \left((1+2p)\left(F_{\text{opt}}-\min_{x \in X} F(x)\right)\varsigma + 2(\varsigma L_f +L_g)D^2+\left(\max_{x \in X} F(x)-f(x_0)\right)\varsigma\right.\\
        &\quad\left.+\max_{x \in X} g(x)-g(x_0)+\frac{2}{1-p}\left(L_f\varsigma+L_g\right)D^2 \left(\frac{1}{2}+16\sqrt{\frac{2}{e(1-p)}+\log\left(16/\delta\right)}\right)\right.\\
        &\quad\left.+8\sqrt{2}\left(L_f\varsigma+L_g\right)D^2\sqrt{\frac{1}{e(1-p)}+\log\left(16/\delta\right)}\right)\left(1+\frac{2p}{\min\{1,2(1-p)\}}\right) (t+1)^{-p}.
    \end{align*}    
    Moreover, with probability $1$, it holds that
    $$\lim_{t\to \infty} F(x_t)= F_{\opt}, \quad \lim_{t \to \infty} G(x_t) = G_{\opt}.$$
\end{namedtheorem}

The proof proceeds by applying \cref{prop:bounds} with $k = q$. Unlike the proof of \cref{theorem:Sconvergence-rate}, here we have two main random components in \cref{eq:weighted-sum-g} and \cref{eq:weighted-sum-f}:
\( (q+1) q \left( \Phi_q(x_q) - \Phi_q(x_{\opt}) \right) \) and \( \sum_{i \in [t] \setminus [q]}4iD \|\widehat{\grad \Phi}_{i-1}-\grad \Phi_{i-1}(x_{i-1})\| \). 
We first derive probabilistic upper bounds for these terms, then carefully set the parameters in \cref{alg:IR-FSCG} to establish the convergence rate for both lower- and upper-level problems. In the following we use the notation
\[ F(x) = \frac{1}{n} \sum_{i \in [n]} f_i(x), \quad G(x) = \frac{1}{n} \sum_{i \in [n]} g_i(x). \]
For simplicity, we assume that $F$ and $G$ have the same size support, though our arguments are easily extended to the more general case. Also, given $\cS \subseteq [n]$, we write
\[ f_{\cS}(x) := \frac{1}{|\cS|} \sum_{i \in \cS} f_i(x), \quad g_{\cS}(x) := \frac{1}{|\cS|} \sum_{i \in \cS} f_i(x). \]

We begin with a probabilistic bound on the gradient estimator used in \cref{alg:IR-FSCG}.
\begin{lemma} \label{lemma:FSestimator}
    Let $\{x_t\}_{t \geq 0}$ denote the iterates generated by \cref{alg:IR-FSCG} with $\alpha_t = \log(q)/q$ for any $ 0\leq t < q$ and $\alpha_t = 2/(t+2)$ for any $t \geq q$. If \cref{ass:smooth-functions} holds, then for any $t \geq 0$, given $\delta \in (0,1)$, with probability at least $1-\delta$, it (jointly) holds that
    \begin{equation}\label{eq:FSnoise1}
        \begin{aligned}
        \left\| \widehat{\grad F}_t-\nabla F(x_t) \right\| &\leq 8L_fD \frac{\log(q)}{\sqrt{qS}} \sqrt{\log\left(8/\delta\right)},\\
        \left\| \widehat{\grad G}_t-\nabla G(x_t) \right\| &\leq 8L_gD\frac{\log(q)}{\sqrt{qS}} \sqrt{\log\left(8/\delta\right)},
        \end{aligned}
    \end{equation}
    provided that $0 \leq t < q$, and     \begin{equation}\label{eq:FSnoise2}
    \begin{aligned}
    \left\| \widehat{\grad F}_t-\nabla F(x_t) \right\| &\leq 8L_fD\sqrt{\frac{(t-s_t)}{S(t+1)(s_t+1)}} \sqrt{\log\left(8/\delta\right)}\\
    \left\|\widehat{\grad G}_t-\nabla G(x_t)\right\| &\leq 8L_gD\sqrt{\frac{(t-s_t)}{S(t+1)(s_t+1)}} \sqrt{\log\left(8/\delta\right)},
    \end{aligned}
    \end{equation}
    where $s_t := q \lfloor t/q\rfloor$, provided that $t > q$.
\end{lemma}
\begin{proof}[Proof of \cref{lemma:FSestimator}]
   If $t = s_t$, we have $\widehat{\grad F}_t = \nabla F(x_t)$. Otherwise, let $\cS_t \subset [n]$ be the index set of size $S$ chosen at iteration $t$, and for $i \in \cS_t$ define
    $$\epsilon_{t,i} := \frac{1}{S}\left(\nabla f_i(x_t)-\nabla f_i(x_{t-1})-\nabla F(x_t)+\nabla F(x_{t-1})\right).$$
    From the update rule for $x_t$, we have $\|x_t-x_{t-1}\| = \alpha_{t-1}\|v_{t-1}-x_{t-1}\| \leq \alpha_{t-1}D$ for any $t \geq 1$. As a result,
    \begin{align*}
        \|\epsilon_{t,i}\| 
        &\leq \frac{1}{S}\left(\|\nabla f_i(x_t)-\nabla f_i(x_{t-1})\|+\|\nabla F(x_t)-\nabla F(x_{t-1})\|\right)
        \leq \frac{2L_fD}{S} \alpha_{t-1},
    \end{align*}
    for any $t \in \{s_t+1,\dots,s_t+q\}$ and $i \in [S_t]$. For any $t$ such that $t \neq s_t$, we have from the definition of $\widehat{\grad F}_t$ that
    $$\widehat{\grad F}_t-\nabla F(x_t) = \widehat{\grad F}_{t-1}-\nabla F(x_{t-1})+\sum_{i \in [S]} \epsilon_{t,i}.$$
    Thus, we deduce that
    $$\left\| \widehat{\grad F}_t-\nabla F(x_t) \right\| = \left\| \sum_{s=s_t+1}^{t} \sum_{i \in \cS_s} \epsilon_{s,i} \right\|.$$
    Since $\sum_{s=s_t+1}^{t} \sum_{i \in \cS_s} \epsilon_{s,i}$ is a martingale with bounded increments, we apply a concentration inequality \citep[Theorem 3.5]{Pinelis1994} to get
    \begin{align*}
        P\left( \left\|\widehat{\grad F}_t-\nabla F(x_t)\right\| \geq \lambda\right) &\leq 4\exp\left(-\frac{\lambda^2}{4\sum_{s=s_t+1}^{t}\frac{4L_f^2D^2}{S}\alpha_{s-1}^2}\right).
    \end{align*}
    If $1 \leq t \leq q$, we have
    $$\sum_{s=s_t+1}^{t}\frac{4L_f^2D^2}{S}\alpha_{s-1}^2 \leq \frac{4L_f^2D^2 q}{S} \left(\frac{\log(q)}{q}\right)^2 = 4L_f^2D^2 \frac{\log^2(q)}{qS},$$
    and thus
    $$P\left( \left\|\widehat{\grad F}_t-\nabla F(x_t)\right\| \geq \lambda\right) \leq 4\exp\left(-\frac{\lambda^2qS}{16L_f^2D^2 \log^2(q)}\right).$$
    If $t > q$, we observe that
    $$\sum_{s = s_t+1}^t \alpha_{s-1}^2 = 4\sum_{s=s_t+1}^t \frac{1}{(s+1)^2} \leq 4\int_{s_t}^t \frac{du}{(u+1)^2}= \frac{4(t-s_t)}{(t+1)(s_t+1)} .$$
    Then, we have
    $$P\left( \left\|\widehat{\grad F}_t-\nabla F(x_t)\right\| \geq \lambda\right) \leq 4\exp\left(-\frac{\lambda^2S(t+1)(s_t+1)}{64L_f^2D^2(t-s_t)}\right).$$
    Arguing similarly for $\widehat{\grad G}_t$, we have
    $$P\left( \left\|\widehat{\grad G}_t-\nabla G(x_t)\right\| \geq \lambda\right) \leq 4\exp\left(-\frac{\lambda^2qS}{16L_g^2D^2 \log^2(q)}\right),$$
    if $1 \leq t \leq q$
    and
    $$P\left( \left\|\widehat{\grad G}_t-\nabla G(x_t)\right\| \geq \lambda\right) \leq 4\exp\left(-\frac{\lambda^2S(t+1)(s_t+1)}{64L_g^2D^2(t-s_t)}\right),$$
    if $t > q$.
    Given $\delta \in (0,1)$ and $1 \leq t \leq q$, setting the right hand side $=\delta/2$ and solving for $\lambda$, we have
    $$P\left( \left\| \widehat{\grad F}_t-\nabla F(x_t) \right\| \geq 8L_fD \frac{\log(q)}{\sqrt{qS}} \sqrt{\log\left(8/\delta\right)}\right) \leq \frac{\delta}{2}$$
    and
    $$P\left( \left\| \widehat{\grad G}_t-\nabla G(x_t) \right\| \geq 8L_gD \frac{\log(q)}{\sqrt{qS}} \sqrt{\log\left(8/\delta\right)}\right) \leq \frac{\delta}{2}.$$
    Applying union bound, we deduce that with probability at least $1-\delta$, \eqref{eq:FSnoise1} holds.
    Arguing similarly for $t > q$, we also deduce that \eqref{eq:FSnoise2} holds with probability at least $1-\delta$.
\end{proof}

To establish convergence results for \cref{alg:IR-FSCG}, we replace \cref{IRCG-C3} with the following. 

\begin{condition}\label{IRCG-C4}
    $\sigma_t = \sigma_0$ for $0 \leq t \leq q$, $(t+1) \sigma_{t+1} > t\sigma_t$ for any $t \geq 0$, and $\log(t)=o(t\sigma_t)$ as~$t \to \infty$.  
\end{condition}

Note that $\sigma_t$ is fixed during the initialization phase, and $\sigma_t^2$ in \cref{IRCG-C3} is replaced by $\sigma_t^2$ in \cref{IRCG-C4}. Unlike the proof of \cref{theorem:Sconvergence-rate}, where we set $k = 0$, the proof of \cref{theorem:FSconvergence-rate} requires setting $k = q$. We therefore begin by analyzing the sequence $\{x_t\}_{t \geq 0}$ generated by \cref{alg:IR-FSCG} over the first $q$ iterations.

\begin{lemma} \label{lemma:initial-phase}
    Let $\{x_t\}_{t \geq 0}$ denote the iterates generated by \cref{alg:IR-FSCG} with $\alpha_t = \log(q)/q$ for any $ 0 \leq t < q$ and $S=q$. If \cref{ass:smooth-functions} and \cref{IRCG-C4} hold, we have
    then given $\delta \in (0,1)$, with probability at least $1-\delta \sum_{0 \leq i < q}1/(i+1)(i+2)$, for any $t \geq q$, it holds that
    \begin{align*} 
        \Phi_q(x_{q})-\Phi_0(x_{\opt}) &\leq \left(1-\frac{\log(q)}{q}\right)^q \left(\Phi_0(x_0)-\Phi_0(x_{\opt})\right)\\
        &\quad+(L_f \sigma_0+L_g)D^2\left(\frac{1}{2}+16\sqrt{\log\left(\frac{16t^2}{\delta}\right)}\right) \frac{\log(q)}{q}.
    \end{align*}
\end{lemma}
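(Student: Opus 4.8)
The plan is to treat the whole initial phase as ordinary conditional gradient with a \emph{fixed} stepsize on the \emph{fixed} objective $\Phi_0$. Indeed, \cref{IRCG-C4} forces $\sigma_t = \sigma_0$ for $0 \le t \le q$, so $\Phi_t \equiv \Phi_0$ throughout this phase; in particular $\Phi_q(x_q) = \Phi_0(x_q)$, and it suffices to bound $h_q := \Phi_0(x_q) - \Phi_0(x_{\opt})$. Writing $\alpha := \log(q)/q$ and $e_s := \|\widehat{\grad \Phi}_s - \nabla \Phi_0(x_s)\|$, \cref{lemma:recursive-rule} applied at each $s \in \{0,\dots,q-1\}$ with this constant stepsize yields the affine recursion
\[ h_{s+1} \le (1-\alpha) h_s + \tfrac{1}{2}(L_f\sigma_0+L_g)D^2\alpha^2 + 2\alpha D\, e_s. \]

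First I would unroll this recursion from $s=0$ to $s=q-1$, obtaining
\[ h_q \le (1-\alpha)^q h_0 + \sum_{s=0}^{q-1}(1-\alpha)^{q-1-s}\Big(\tfrac{1}{2}(L_f\sigma_0+L_g)D^2\alpha^2 + 2\alpha D\, e_s\Big). \]
The decisive observation is that the geometric weights must be \emph{retained}, not discarded: since $\sum_{s=0}^{q-1}(1-\alpha)^{q-1-s} = (1-(1-\alpha)^q)/\alpha \le 1/\alpha$, each factor $\alpha^2$ collapses to $\alpha$ and each factor $\alpha$ collapses to a constant. The deterministic contribution is then at most $\tfrac{1}{2}(L_f\sigma_0+L_g)D^2\alpha = \tfrac12(L_f\sigma_0+L_g)D^2\log(q)/q$, which is precisely the $\tfrac12$ term in the claim.

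For the stochastic term I would invoke the initial-phase estimate \eqref{eq:FSnoise1} of \cref{lemma:FSestimator}. Applying it at step $s$ with confidence $\delta_s := \delta/((s+1)(s+2))$ and taking a union bound over $s = 0,\dots,q-1$ gives total failure probability $\sum_{0\le i<q}\delta/((i+1)(i+2))$, matching the stated guarantee. On the good event, using $e_s \le \sigma_0\|\widehat{\grad F}_s-\nabla F(x_s)\|+\|\widehat{\grad G}_s-\nabla G(x_s)\|$, the bound $(s+1)(s+2)\le 2t^2$ (valid since $s<q\le t$, whence $\sqrt{\log(8/\delta_s)}\le\sqrt{\log(16t^2/\delta)}$), and $S=q$, I obtain the uniform estimate $e_s \le 8(\sigma_0 L_f+L_g)D\,\tfrac{\log q}{q}\sqrt{\log(16t^2/\delta)}$. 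Substituting into the weighted sum and again using $\sum_s(1-\alpha)^{q-1-s}\le 1/\alpha$, the noise contribution is at most $2D\max_s e_s = 16(\sigma_0 L_f+L_g)D^2\tfrac{\log q}{q}\sqrt{\log(16t^2/\delta)}$. Adding the deterministic and stochastic pieces to $(1-\alpha)^q h_0$, with $h_0 = \Phi_0(x_0)-\Phi_0(x_{\opt})$, yields the claim.

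The hard part will be exactly this rate bookkeeping: a careless union bound followed by the crude $(1-\alpha)^{q-1-s}\le 1$ produces an extra $\log q$ factor (i.e.\ $\log^2(q)/q$ rather than $\log(q)/q$), so one must exploit the geometric decay of the weights to absorb one power of $\alpha$. The secondary, more routine, subtlety is converting the step-dependent levels $\delta_s$ into the single clean logarithmic factor $\sqrt{\log(16t^2/\delta)}$ through the telescoping series $\sum 1/((i+1)(i+2))$ together with $(s+1)(s+2)\le 2t^2$.
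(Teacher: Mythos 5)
Your proof is correct and follows essentially the same route as the paper: unroll \cref{lemma:recursive-rule} with the constant stepsize $\alpha_0=\log(q)/q$ over the initial phase (where $\Phi_0=\cdots=\Phi_q$ by \cref{IRCG-C4}), apply \cref{lemma:FSestimator} at step $s$ with confidence $\delta/((s+1)(s+2))$, and take a union bound. In fact your bookkeeping is tighter than the paper's displayed intermediate inequality, which drops both the factor $\alpha_0$ and the geometric weights $(1-\alpha_0)^{q-1-i}$ from the noise sum and, read literally, produces an extra factor of $q$ (order $\log q$ rather than $\log(q)/q$); your device of bounding each $e_s$ uniformly on the good event and collapsing $\sum_{s}(1-\alpha_0)^{q-1-s}\le 1/\alpha_0$ is exactly what is needed to recover the constant $16(\sigma_0 L_f+L_g)D^2\,\tfrac{\log q}{q}\sqrt{\log(16t^2/\delta)}$ claimed in the lemma.
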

\begin{proof}[Proof of \cref{lemma:initial-phase}]
    By \cref{lemma:recursive-rule}, for any $0 \leq i < q$, we have
    \begin{align}
        \label{eq:phi:i}
        \begin{aligned}
            \Phi_{i}(x_{i+1}) - \Phi_{i}(x_{\opt}) &\leq (1 - \alpha_{i}) (\Phi_{i}(x_{i}) - \Phi_{i}(x_{\opt})) + \frac{(L_f \sigma_{i} + L_g) D^2 \alpha_{i}^2}{2} \\
            &\quad + 2D \alpha_{i} \| \nabla \Phi_{i}(x_{i}) - \widehat{\grad \Phi}_{i} \|.
        \end{aligned}
    \end{align}
    By \cref{IRCG-C4}, we have $\alpha_0 = \dots = \alpha_{q-1}$ and
    \[ \sigma_0 = \dots = \sigma_q \quad \implies \Phi_0 = \dots = \Phi_q, \]
    where the implication follows from the definition of function $\Phi_t$. Thus, we may conclude that
    \begin{align*}
        & \Phi_q(x_{q})-\Phi_q(x_{\opt}) \\
        &\leq (1-\alpha_0)(\Phi_0(x_{q-1})-\Phi_0(x_{\opt})) + \frac{(L_f \sigma_0+L_g)D^2 \alpha_0^2}{2} + 2D\alpha_0\|\nabla \Phi_{q-1}(x_{q-1})-\widehat{\grad \Phi}_{q-1} \| \\
        &\leq (1 - \alpha_0)^q \left( \Phi_0(x_0) - \Phi_0(x_{\opt}) \right) + \sum_{i = 0}^{q-1} \frac{(L_f \sigma_0+L_g)D^2 \alpha_0^2}{2} (1 - \alpha_0)^{i} \\
        &\quad + \sum_{i = 0}^{q-1} 2D \alpha_0 \|\nabla \Phi_i(x_{i}) - \widehat{\grad \Phi}_{i} \| (1 - \alpha_0)^{q-1-i},
    \end{align*}
    where both inequalities are direct consequences of \cref{IRCG-C4} and \eqref{eq:phi:i}. Note that
    \[ \sum_{i = 0}^{q-1} (1 - \alpha_0)^{q-1-i} 
    = \sum_{i = 0}^{q-1} (1 - \alpha_0)^{i}
    \leq \sum_{i = 0}^{\infty} (1 - \alpha_0)^{i}
    = \frac{1}{\alpha_0}, \]
    where the last equality follows from convergence of geometric series. We thus arrive at
    \begin{align}
        \label{eq:phi:q}
        \begin{aligned}
            \Phi_q(x_{q})-\Phi_q(x_{\opt}) 
            &\leq (1 - \alpha_0)^q \left( \Phi_0(x_0) - \Phi_0(x_{\opt}) \right) + \sum_{i = 0}^{q-1} \frac{(L_f \sigma_0+L_g)D^2 \alpha_0}{2} \\
            &\quad + \sum_{i = 0}^{q-1} 2D \|\nabla \Phi_i(x_{i}) - \widehat{\grad \Phi}_{i} \|.
        \end{aligned}
    \end{align}
    Note that by the definition of $\Phi_i$ and the triangle inequality, we have
    \begin{align*}
        \|\nabla \Phi_i(x_{i}) - \widehat{\grad \Phi}_{i} \| 
        \leq \sigma_0 \|\widehat{\grad F}_{i}-\nabla F(x_{i})\|+\|\widehat{\grad G}_{i}-\nabla G(x_{i})\|.
    \end{align*}
    Hence, applying \cref{lemma:FSestimator} along with a union bound argument implies that, for any $\delta \in (0,1)$, with probability at least $1-\delta \sum_{0 \leq i < q}1/(i+1)(i+2)$, we have
    \begin{align*}
        \sum_{i = 0}^{q-1} \|\nabla \Phi_0(x_{i}) - \widehat{\grad \Phi}_{i} \| \leq \sum_{i = 0}^{q-1} 8D (\sigma_0L_f+L_g) \frac{\log(q)}{\sqrt{qS}} \sqrt{\log\left(\frac{8(i+1)(i+2)}{\delta}\right)}.
    \end{align*}
    Plugging the above probabilistic bound into \cref{eq:phi:q}, noting that $t \geq q$ and $S=q$, and using the definition of $\alpha_0$ complete the proof.
\end{proof}

We next analyze the sequence $\{x_t\}_{t \geq 0}$ generated by \cref{alg:IR-FSCG} after the first $q$ iterations.

\begin{proposition} \label{prop:FSerror}
    Let $\{x_t\}_{t \geq 0}$ denote the iterates generated by \cref{alg:IR-FSCG} with $\alpha_t = \log(q)/q$ for any $0 \leq t < q$, $\alpha_t = 2/(t+2)$ for any $t \geq q$ and $S=q$. If \cref{ass:smooth-functions} holds and the parameters $\{\sigma_t\}_{t \geq 0}$ are non-increasing and positive, then given $\delta \in (0,1)$, with probability at least $1-\delta\sum_{i \in [t] \setminus [q]} 1/i(i+1)$, for any $t > q$, it holds that
    \begin{equation} \label{eq:FSaccum-error}
        \sum_{i \in [t] \setminus [q]} i\|\widehat{\grad \Phi}_{i-1}-\nabla \Phi_{i-1}(x_{i-1})\|\leq \left(8\sqrt{2}t-4q\right)(\sigma_0L_f+L_g)D  \sqrt{\log\left(\frac{16t^2}{\delta}\right)}.
    \end{equation}
\end{proposition}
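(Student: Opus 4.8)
The plan is to mirror the proof of \cref{prop:combine-bound-IRSCG}, replacing the \texttt{STORM} bound of \cref{lemma:Sestimator} with the \texttt{SPIDER} bound of \cref{lemma:FSestimator} and exploiting the sharper variance decay that the latter provides. First I would reduce the composite error to the individual errors: since $\widehat{\grad \Phi}_{i-1} - \nabla \Phi_{i-1}(x_{i-1}) = \sigma_{i-1}(\widehat{\grad F}_{i-1}-\nabla F(x_{i-1})) + (\widehat{\grad G}_{i-1}-\nabla G(x_{i-1}))$, the triangle inequality together with the monotonicity bound $\sigma_{i-1}\leq\sigma_0$ gives
\[ \|\widehat{\grad \Phi}_{i-1}-\nabla \Phi_{i-1}(x_{i-1})\| \leq \sigma_0\|\widehat{\grad F}_{i-1}-\nabla F(x_{i-1})\| + \|\widehat{\grad G}_{i-1}-\nabla G(x_{i-1})\|. \]

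For each $i \in [t]\setminus[q]$ I would invoke \cref{lemma:FSestimator} at step $i-1$ with confidence level $\eta_i := \delta/(i(i+1))$. For $i=q+1$ the step index $i-1=q$ is a multiple of $q$, so the estimator is exact and the corresponding term vanishes; for $i\geq q+2$ we have $i-1>q$, and \eqref{eq:FSnoise2}, applied jointly to $F$ and $G$ and multiplied through by $i$, yields with probability at least $1-\eta_i$
\[ i\,\|\widehat{\grad \Phi}_{i-1}-\nabla \Phi_{i-1}(x_{i-1})\| \leq 8(\sigma_0 L_f + L_g) D \sqrt{\frac{i\,(i-1-s_{i-1})}{q\,(s_{i-1}+1)}}\,\sqrt{\log(8/\eta_i)} \]
after substituting $S=q$ and collecting the common $\sqrt{\cdot}$ factor. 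Since $\log(8/\eta_i)=\log(8i(i+1)/\delta)\leq\log(16t^2/\delta)$ for $i\leq t$, the logarithmic factor is controlled uniformly.

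The crux is to show the fractional factor is bounded by an absolute constant. The numerator obeys $i-1-s_{i-1}\leq q$ because $s_{i-1}=q\lfloor(i-1)/q\rfloor$ is the largest multiple of $q$ not exceeding $i-1$; this is exactly where the periodic full-gradient recomputation of \texttt{SPIDER} (with batch size $S=q$) enters, and cancelling it against the $q$ in the denominator leaves $i/(s_{i-1}+1)$. Within the block where $s_{i-1}=kq$ (with $k\geq 1$, since $i-1\geq q$ for the surviving terms), one has $i\leq(k+1)q$, whence $i/(s_{i-1}+1)\leq (k+1)q/(kq)=1+1/k\leq 2$. Thus each surviving term is at most $8\sqrt{2}(\sigma_0 L_f+L_g)D\sqrt{\log(16t^2/\delta)}$.

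Finally I would take a union bound over $i\in[t]\setminus[q]$, incurring failure probability $\delta\sum_{i\in[t]\setminus[q]}1/(i(i+1))$ to match the stated confidence, and sum the at most $t-q$ surviving terms to get $8\sqrt{2}(t-q)(\sigma_0 L_f+L_g)D\sqrt{\log(16t^2/\delta)}$; since $8\sqrt{2}\geq 4$ this is dominated by $(8\sqrt{2}t-4q)(\sigma_0 L_f+L_g)D\sqrt{\log(16t^2/\delta)}$, which is the claim. The main obstacle is the block-wise estimate $i/(s_{i-1}+1)\leq 2$: it is precisely what converts the $O(\sqrt{i})$ per-term growth of the \texttt{STORM} analysis into an $O(1)$ per-term bound, and hence the overall $O(t)$ rather than $O(t^{3/2})$ scaling that drives the improved finite-sum rate.
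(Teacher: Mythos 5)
Your proof is correct, and it follows the paper's overall skeleton — the same triangle-inequality decomposition with $\sigma_{i-1}\leq\sigma_0$, the same invocation of \cref{lemma:FSestimator} at step $i-1$ with per-index confidence $\eta_i=\delta/(i(i+1))$, and the same union bound — but it handles the crux differently and, in fact, more cleanly. The paper bounds $s_{i-1}+1 \geq i-q$, which makes $i/(s_{i-1}+1)\leq i/(i-q)$ blow up for $i$ just above $q$; this forces a case split at $i=2q$, with the block $q< i\leq 2q$ handled by the lossier per-term bound $i/q$ and the Riemann-sum estimate $\frac{1}{q}\sum_{q<i\leq t} i \leq \sqrt{2}t - q/2$, and only the tail $i>2q$ enjoying a constant per-term bound. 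Your block-wise estimate — writing $s_{i-1}=kq$ with $k\geq 1$, so that $i\leq (k+1)q$ and $i/(s_{i-1}+1)\leq 1+1/k\leq 2$ — gives the uniform per-term bound $8\sqrt{2}(\sigma_0 L_f+L_g)D\sqrt{\log(16t^2/\delta)}$ for all $i\geq q+2$ at once, eliminating the case split entirely and yielding the slightly sharper intermediate bound $8\sqrt{2}(t-q)$, which you then correctly relax to the stated $(8\sqrt{2}t-4q)$ via $8\sqrt{2}q\geq 4q$. Your observation that the $i=q+1$ term vanishes (since $\widehat{\grad \Phi}_q=\nabla\Phi_q(x_q)$ exactly at the recomputation step) is also correct and mirrors what the paper absorbs implicitly through the factor $i-1-q=0$. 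The one thing your route buys beyond brevity is conceptual: it makes transparent that the $O(1)$ per-term bound — and hence the $O(t)$ rather than $O(t^{3/2})$ accumulation driving the improvement over \cref{prop:combine-bound-IRSCG} — comes from the denominator $s_{i-1}+1$ staying within a factor $2$ of $i$ throughout every \texttt{SPIDER} block, not just asymptotically.
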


\begin{proof}[Proof of \cref{prop:FSerror}]
    For any $i \geq 1$, we have
    \begin{align}
        \|\widehat{\grad \Phi}_{i-1}-\nabla \Phi_{i-1}(x_{i-1})\| &\leq (\sigma_{i-1} \|\widehat{\grad F}_{i-1}-\nabla F(x_{i-1})\|+\|\widehat{\grad G}_{i-1}-\nabla G(x_{i-1})\|) \notag \\
        &\leq (\sigma_0 \|\widehat{\grad F}_{i-1}-\nabla F(x_{i-1})\|+\|\widehat{\grad G}_{i-1}-\nabla G(x_{i-1})\|) \label{eq:trivial}.
    \end{align}
    Given $\delta \in (0,1)$ and $q < i \leq 2q$, by \cref{lemma:FSestimator} and the inequality~\cref{eq:trivial}, with probability at least $1- \delta/i(i+1)$, we have
    \begin{align*}
        i \|\widehat{\grad \Phi}_{i-1}-\nabla \Phi_{i-1}(x_{i-1})\| 
        &\leq 8(\sigma_0L_f+L_g) D i \sqrt{\frac{i-1-q}{q i(q+1)}} \sqrt{\log\left(\frac{8i(i+1)}{\delta}\right)}\\
        &\leq 8(\sigma_0L_f+L_g)\frac{Di}{q} \sqrt{\log\left(\frac{8i(i+1)}{\delta}\right)}.
    \end{align*}
    Hence, if $q < t \leq 2q$, with probability at least $1-\delta\sum_{q< i \leq  t} 1/i(i+1)$, we have
    \begin{align*}
        \sum_{q< i \leq t } i\|\widehat{\grad \Phi}_{i-1}-\nabla \Phi_{i-1}(x_{i-1})\| 
        &\leq \sum_{q <i \leq  t  } 8(\sigma_0L_f+L_g) \frac{Di}{q} \sqrt{\log\left(\frac{16t^2}{\delta}\right)}\\
        &\leq (\sigma_0L_f+L_g)D (8\sqrt{2}t - 4q) \sqrt{\log\left(\frac{16t^2}{\delta}\right)},
    \end{align*}
    where the second inequality holds as $\frac{1}{q} \sum_{q < i \leq t} i \leq \sqrt{2}t - q/2$ for all $q < t \leq 2q$.
    Similarly, given $\delta \in (0,1)$ and $i > 2q$, by \cref{lemma:FSestimator} and the inequality~\cref{eq:trivial}, with probability at least $1- \delta/i(i+1)$, we have
    \begin{align*}
        i \|\widehat{\grad \Phi}_{i-1}-\nabla \Phi_{i-1}(x_{i-1})\| 
        &\leq \frac{8(\sigma_0L_f+L_g)D i}{\sqrt{i(q\lfloor (i-1)/q\rfloor+1)}} \sqrt{\log\left(\frac{8i(i+1)}{\delta}\right)}\\
        &\leq 8(\sigma_0L_f+L_g)D \sqrt{\frac{i}{i-q}} \sqrt{\log\left(\frac{8i(i+1)}{\delta}\right)}\\
        &\leq 8\sqrt{2} (\sigma_0L_f+L_g)D  \sqrt{\log\left(\frac{8i(i+1)}{\delta}\right)},
    \end{align*}
    where the second inequality follows since $q\lfloor (i-1)/q\rfloor+1 \geq i-q$, and the third inequality follows since $i/(i-q) < 2$ when $i > 2q$. Thus, if $t > 2q$, with probability at least $1-\delta\sum_{2q < i \leq t} 1/i(i+1) = 1-\delta$, we have
    \begin{align*}
        \sum_{2q < i \leq t} i\|\widehat{\grad \Phi}_{i-1}-\nabla \Phi_{i-1}(x_{i-1})\|
        &\leq \sum_{2q < i \leq t} 8\sqrt{2}(\sigma_0L_f+L_g)D \sqrt{\log\left(\frac{16t^2}{\delta}\right)}\\
        &= 8\sqrt{2}(\sigma_0L_f+L_g)D(t-2q)\sqrt{\log\left(\frac{16t^2}{\delta}\right)}.
    \end{align*}
    For $t > 2q$, with probability at least $1-\delta\sum_{q< i \leq  t} 1/i(i+1)$, the sum over all $q < i \leq t$ is thus
    \begin{align*}
        &\sum_{q< i \leq t } i\|\widehat{\grad \Phi}_{i-1}-\nabla \Phi_{i-1}(x_{i-1})\| \\
        &\leq (\sigma_0L_f+L_g)D (8 \sqrt{2}(2q)-4q) \sqrt{\log\left(\frac{16t^2}{\delta}\right)} +8\sqrt{2}(\sigma_0L_f+L_g)D(t-2q)\sqrt{\log\left(\frac{16t^2}{\delta}\right)}\\
        &=\left(8\sqrt{2}t-(16\sqrt{2}-16 \sqrt{2} + 4)q\right)(\sigma_0L_f+L_g)D\sqrt{\log\left(\frac{16t^2}{\delta}\right)}\\
        &=\left(8\sqrt{2}t-4q\right)(\sigma_0L_f+L_g)D\sqrt{\log\left(\frac{16t^2}{\delta}\right)}.
    \end{align*}
    Therefore, regardless of whether $q < t \leq 2q$ or $t > 2q$, \eqref{eq:FSaccum-error} holds.
\end{proof}

The following parameters will appear in our analysis and convergence rates
\begin{subequations} \label{eq:FSCV}
    \begin{align}
        C_q&:=\sup_{t \geq q} \left\{ \begin{array}{l}
        \displaystyle \left(F_{\opt}-\min_{x \in X} F(x)\right)\left(1+ \frac{\sum_{i \in [t]}(i+1)i(\sigma_{i-1}-\sigma_i)}{(t+1)t\sigma_t}\right) \\[3ex]
        \displaystyle + \frac{2(\sigma_0 L_f+L_g)D^2(t-q)}{ (t+1)t\sigma_t}\end{array}\right\}, \label{Cq}\\
        \bar{C}_{\delta,q}&:=\sup_{t \geq q} \left\{\begin{array}{l}
            \frac{(q+1)\left(\max\left\{\Phi_0(x_0)-\Phi_0(x_{\opt}),0\right\}+(L_f \sigma_0+L_g)D^2\left(\frac{1}{2}+16\sqrt{\log\left(\frac{16t^2}{\delta}\right)}\right) \log(q)\right)}{(t+1)t\sigma_t} \\[2ex]
            +\frac{\left(8\sqrt{2}t-4q\right)(\sigma_0L_f+L_g)D^2  \sqrt{\log\left(\frac{16t^2}{\delta}\right)}}{(t+1)t\sigma_t}
        \end{array}\right\}, \label{barCdeltaq} \\
        C_{\delta,q}&:= \bar{C}_{\delta,q}+C_q \label{Cdeltaq}, \\
        V_{q}&:= \sup_{t \geq q} \left\{ \frac{\sum_{i \in [t]\setminus [q]}(i+1)i(\sigma_{i-1}-\sigma_i)\sigma_i}{(t+1)t\sigma_t^2} \right\}. \label{Vq}
    \end{align}
\end{subequations}
\cref{lemma:CV} in \cref{appendix:finite} guarantees that these quantities remain finite for any parameter choice satisfying \crefrange{IRCG-C1}{IRCG-C2} and \ref{IRCG-C4}. In particular, these conditions hold when $\sigma_t = \varsigma (t+1)^p$ for any $p \in (0,1)$, which is what is used in \cref{theorem:FSconvergence-rate}. We now analyze the sequence $\{ x_t \}_{t \geq 0}$ in \cref{alg:IR-FSCG} for the inner-level problem.

\begin{lemma} \label{lemma:FSlastiterate}
    Let $\{x_t\}_{t \geq 0}$ denote the iterates generated by \cref{alg:IR-FSCG} with $\alpha_t = \log(q)/q$ for $0 \leq t < q$ $\alpha_t = 2/(t+2)$ for any $t \geq q$, $S=q$, and let $C_{\delta,q}$ be defined as in~\eqref{Cdeltaq}. If \cref{ass:smooth-functions} and \cref{IRCG-C1}, \ref{IRCG-C2} and \ref{IRCG-C4} hold, then with probability at least $1-\delta$, it holds that
    \begin{equation} \label{eq:FSbound-inner}
        G(x_t)-G_{\opt} \leq C_{\delta,q}\sigma_t,
    \end{equation}
    for any $t \geq q$. 
\end{lemma}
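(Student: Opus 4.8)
The plan is to mirror the argument for \cref{lemma:lastiterate}, but now applying \cref{prop:bounds} with $k=q$ instead of $k=0$, which introduces a \emph{second} random quantity to control. Starting from inequality~\eqref{eq:weighted-sum-g} with $k=q$, I would split its right-hand side into a deterministic part — the $(t-q)$ smoothness term together with the $\bigl(F_{\opt}-\min_{x\in X}F(x)\bigr)$ contribution — and two stochastic parts: the initial-block gap $(q+1)q\bigl(\Phi_q(x_q)-\Phi_q(x_{\opt})\bigr)/(t(t+1))$ and the accumulated gradient-error sum $\sum_{i\in[t]\setminus[q]}4iD\|\widehat{\grad\Phi}_{i-1}-\grad\Phi_{i-1}(x_{i-1})\|/(t(t+1))$. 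The first stochastic term is controlled by \cref{lemma:initial-phase} and the second by \cref{prop:FSerror}.

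Before invoking \cref{lemma:initial-phase} I would record that \cref{IRCG-C4} forces $\sigma_0=\cdots=\sigma_q$, hence $\Phi_0=\cdots=\Phi_q$, so the quantity $\Phi_q(x_q)-\Phi_0(x_{\opt})$ bounded there is exactly the $\Phi_q(x_q)-\Phi_q(x_{\opt})$ appearing in~\eqref{eq:weighted-sum-g}. To reconcile the $(q+1)q$ prefactor with the $(q+1)$ scaling in the definition~\eqref{barCdeltaq} of $\bar C_{\delta,q}$, the one genuinely nontrivial estimate is to absorb the extra factor $q$ into the geometric contraction: from $1-x\le e^{-x}$ one gets $(1-\log(q)/q)^q\le q^{-1}$, so $q(1-\log(q)/q)^q\le 1$. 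The remaining $\tfrac12+16\sqrt{\log(16t^2/\delta)}$ piece carries a factor $\log(q)/q$, against which one power of $q$ cancels, leaving precisely the $(q+1)\log(q)$ scaling of~\eqref{barCdeltaq}.

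The key probabilistic step is to combine the two guarantees. \cref{lemma:initial-phase} holds outside an event of probability $\delta\sum_{0\le i<q}1/(i+1)(i+2)$, and \cref{prop:FSerror} outside an event of probability $\delta\sum_{i\in[t]\setminus[q]}1/i(i+1)$. The crucial observation is that, after reindexing $j=i+1$ in the first sum, these failure budgets telescope:
\[
  \sum_{0\le i<q}\frac{1}{(i+1)(i+2)}+\sum_{q<i\le t}\frac{1}{i(i+1)}=\sum_{j=1}^{t}\frac{1}{j(j+1)}=1-\frac{1}{t+1}<1,
\]
so a single union bound shows both estimates hold simultaneously, for every $t\ge q$, with probability at least $1-\delta$. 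This telescoping — which lets two separately accumulated error budgets share the one confidence level $\delta$ — is the main structural difference from the proof of \cref{lemma:lastiterate}, where only a single random sum had to be controlled.

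Finally, on this good event I would substitute the two bounds into~\eqref{eq:weighted-sum-g}, divide through by $\sigma_t$, and identify the deterministic remainder as $C_q$ from~\eqref{Cq} and the two simplified stochastic contributions as the two fractions defining $\bar C_{\delta,q}$ in~\eqref{barCdeltaq}; taking suprema over $t\ge q$ and using $C_{\delta,q}=\bar C_{\delta,q}+C_q$ from~\eqref{Cdeltaq} bounds the right-hand side by $C_{\delta,q}\sigma_t$, which is~\eqref{eq:FSbound-inner}. I expect the main obstacle to be bookkeeping rather than conceptual: correctly threading the $q$-dependence through the contraction estimate $q(1-\log(q)/q)^q\le 1$ and matching the surviving absolute constants to the intricate definitions in~\eqref{eq:FSCV}, since the probabilistic heavy lifting is already discharged by \cref{lemma:initial-phase,prop:FSerror}.
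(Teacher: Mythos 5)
Your proposal is correct and follows essentially the same route as the paper's proof: both apply \eqref{eq:weighted-sum-g} from \cref{prop:bounds} with $k=q$, bound the initial-block term $(q+1)q\left(\Phi_q(x_q)-\Phi_q(x_{\opt})\right)$ via \cref{lemma:initial-phase} (using $\Phi_0=\Phi_q$ from \cref{IRCG-C4}) and the accumulated gradient-error sum via \cref{prop:FSerror}, combine the two failure budgets by a union bound, and absorb the result into the definitions $C_{\delta,q}=\bar C_{\delta,q}+C_q$. Your explicit contraction estimate $q\left(1-\log(q)/q\right)^q\leq 1$ and the telescoping of the confidence budgets $\sum_{0\leq i<q}\frac{1}{(i+1)(i+2)}+\sum_{q<i\leq t}\frac{1}{i(i+1)}<1$ are precisely the ``straightforward calculations'' the paper leaves implicit, so beyond matching absolute constants (where the paper itself appears to drop a benign factor from the $4iD$ term) there is nothing to add.
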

\begin{proof}[Proof of \cref{lemma:FSlastiterate}]
    Combining \eqref{eq:weighted-sum-g} from \cref{prop:bounds} with the probabilistic bounds in \cref{lemma:initial-phase} and \cref{prop:FSerror}, and after some straightforward calculations, with probability at least $1-\delta$, we have
    \begin{align*}
        &G(x_t)-G_{\opt} \\
        &\leq   \frac{(q+1)\left(\max\left\{\Phi_0(x_0)-\Phi_0(x_{\opt}),0\right\}+(L_f \sigma_0+L_g)D^2\left(\frac{1}{2}+16\sqrt{\log\left(\frac{16t^2}{\delta}\right)}\right) \log(q)\right)}{(t+1)t}\\
        &\quad +\frac{2(\sigma_0 L_f+L_g)D^2(t-q)+ \left(8\sqrt{2}t-4q\right)(\sigma_0L_f+L_g)D^2  \sqrt{\log\left(\frac{16t^2}{\delta}\right)}}{(t+1)t}\\
        &\quad
        + \left(F_{\opt} \!-\! \min_{x \in X} F(x)\right) \left( \sigma_t \!+\! \frac{\sum_{i \in [t]} (i+1)i(\sigma_{i-1} \!-\! \sigma_i)}{(t+1)t} \right).
    \end{align*}
    Here, the right-hand side is at most $C_{\delta,q}$, and therefore, $G(x_t)-G_{\opt}\leq C_{\delta,q}\sigma_t$ for any $t \geq q$. 
\end{proof}

We next analyze the convergence of the sequence $\{ z_t \}_{t \geq 0}$ in \cref{alg:IR-FSCG} for both outer- and inner-level problems. 

\begin{lemma} \label{lemma:FStwoupperbounds}
    Let $\{x_t\}_{t \geq 0}$ denote the iterates generated by \cref{alg:IR-FSCG} with $\alpha_t = \log(q)/q$ for $0 \leq t < q$ $\alpha_t = 2/(t+2)$ for any $t \geq q$, $S=q$, and let $C_{\delta,q}$ and $V_q$ defined as in \cref{eq:FSCV}. If \cref{ass:smooth-functions} and \cref{IRCG-C1}, \ref{IRCG-C2} and \ref{IRCG-C4} hold, then with probability at least $1-\delta$, it (jointly) holds that
    \begin{align*}
        &F(z_t) - F_{\opt} \\
        &\leq \frac{(q+1)\left(\max\left\{\Phi_0(x_0)-\Phi_0(x_{\opt}),0\right\}+(L_f \sigma_0+L_g)D^2\left(\frac{1}{2}+16\sqrt{\log\left(\frac{16t^2}{\delta}\right)}\right) \log(q)\right)}{(t+1)t\sigma_t}\\
        &\quad +\frac{2(\sigma_0 L_f+L_g)D^2(t-q)+ \left(8\sqrt{2}t-4q\right)(\sigma_0L_f+L_g)D^2  \sqrt{\log\left(\frac{16t^2}{\delta}\right)}}{(t+1)t\sigma_t},\\
        &G(z_t)-G_{\opt} \leq C_{\delta,q}(1+V_q) \sigma_t,
    \end{align*}
    for any $t \geq q$.
\end{lemma}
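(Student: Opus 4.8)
The plan is to mirror the proof of \cref{lemma:twoupperbounds}, now with the truncation index $k=q$ and with the two random terms appearing in \cref{eq:weighted-sum-f} controlled by the two separate concentration results already established. First I would recall from \cref{alg:IR-FSCG} that, for $t \geq q$,
\[ z_t = \frac{1}{S_t}\left( (t+1)t\sigma_t\, x_t + \sum_{i \in [t]\setminus[q]} (i+1)i(\sigma_{i-1}-\sigma_i)\, x_i \right), \]
and observe that, since $\{\sigma_t\}$ is non-increasing by \cref{IRCG-C1}, all these weights are nonnegative and sum exactly to $S_t$; hence $z_t$ is a genuine convex combination of $x_t$ and $\{x_i\}_{i \in [t]\setminus[q]}$, and its weighted objective is precisely the left-hand side of \cref{eq:weighted-sum-f} with $k=q$.

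For the outer bound I would apply Jensen's inequality to the convex $F$, giving $S_t(F(z_t)-F_{\opt})$ bounded above by the left-hand side of \cref{eq:weighted-sum-f}, and then invoke \cref{eq:weighted-sum-f} from \cref{prop:bounds} to upper bound this by its right-hand side. That right-hand side contains two random quantities. The initialization term $(q+1)q(\Phi_q(x_q)-\Phi_q(x_{\opt}))$ is handled by \cref{lemma:initial-phase}: using the identity $\Phi_q = \Phi_0$ and $\Phi_q(x_{\opt})=\Phi_0(x_{\opt})$ granted by \cref{IRCG-C4}, together with the simplifications $q(1-\log(q)/q)^q \leq 1$ and $(1-\log(q)/q)^q(\Phi_0(x_0)-\Phi_0(x_{\opt})) \leq \max\{\Phi_0(x_0)-\Phi_0(x_{\opt}),0\}$, this term collapses to the first line of \cref{barCdeltaq}. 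The accumulated gradient-error term $\sum_{i\in[t]\setminus[q]} 4iD\|\widehat{\grad\Phi}_{i-1}-\grad\Phi_{i-1}(x_{i-1})\|$ is bounded via \cref{prop:FSerror}. Dividing through by $S_t$ and using $S_t \geq (t+1)t\sigma_t$ (which holds by \cref{IRCG-C1} and \cref{IRCG-C4}) yields the stated bound on $F(z_t)-F_{\opt}$.

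For the inner bound I would reuse \cref{lemma:FSlastiterate}, which supplies $G(x_i)-G_{\opt} \leq C_{\delta,q}\sigma_i$ for every index $i \geq q$ entering $z_t$. Applying Jensen's inequality to the convex $G$ and then \cref{lemma:FSlastiterate} gives
\[ G(z_t)-G_{\opt} \leq \frac{C_{\delta,q}}{S_t}\left( (t+1)t\sigma_t^2 + \sum_{i\in[t]\setminus[q]}(i+1)i(\sigma_{i-1}-\sigma_i)\sigma_i \right), \]
and bounding $S_t \geq (t+1)t\sigma_t$ while recognizing the bracketed expression as $(t+1)t\sigma_t^2(1+V_q)$ through the definition of $V_q$ in \cref{Vq} delivers $G(z_t)-G_{\opt}\leq C_{\delta,q}(1+V_q)\sigma_t$.

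The main obstacle is the probabilistic bookkeeping rather than the algebra: the outer estimate depends on both \cref{lemma:initial-phase} (covering the error at iterations $0,\dots,q-1$, valid on an event of failure budget $\delta\sum_{0\leq i<q}1/((i+1)(i+2))$) and \cref{prop:FSerror} (covering iterations $q+1,\dots,t$, with budget $\delta\sum_{q<i\leq t}1/(i(i+1))$), while the $G$ bound reuses \cref{lemma:FSlastiterate}. I would verify that these per-iteration failure budgets are allocated so that, summed across all iterations $1,\dots,t$, the telescoping identity $\sum_{i=1}^{t}1/(i(i+1)) = 1 - 1/(t+1) < 1$ keeps the total failure probability strictly below $\delta$; this guarantees that all three estimates—and hence the ``jointly'' claim—hold simultaneously on a single event of probability at least $1-\delta$.
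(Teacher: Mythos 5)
Your proposal is correct and takes essentially the same route as the paper's proof: Jensen's inequality applied to the left-hand side of \cref{eq:weighted-sum-f} with $k=q$, the initialization term controlled via \cref{lemma:initial-phase}, the accumulated gradient-error term via \cref{prop:FSerror}, then $S_t \geq (t+1)t\sigma_t$ for the outer bound and Jensen plus \cref{lemma:FSlastiterate} and the definition of $V_q$ for the inner bound, with the same telescoping union-bound budget $\sum_i 1/(i(i+1)) < 1$ making all estimates hold jointly. You even correctly identify \cref{prop:FSerror} as the relevant concentration result where the paper's proof text mistakenly cites \cref{prop:combine-bound-IRSCG}.
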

\begin{proof} [Proof of \cref{lemma:FStwoupperbounds}]
    Recall from \cref{alg:IR-SCG} that we have defined
    \[ \begin{cases}
        S_{t+1} := (t+2)(t+1)\sigma_{t+1} + \sum_{i\in [t+1] \backslash [q]}(i+1)i(\sigma_{i-1}-\sigma_i) \\[2ex]
        z_{t+1} := \frac{1}{S_{t+1}} \left( (t+2)(t+1)\sigma_{t+1} x_{t+1}+ \sum_{i\in [t+1] \backslash [q]}(i+1)i(\sigma_{i-1}-\sigma_i)x_i \right),
    \end{cases} \]
    thus $z_{t}$ is simply a convex combination of $x_0, \dots, x_{t}$ for every $t \geq q$. Therefore, as $F$ is convex, we can apply Jensen's inequality to the left-hand side of the inequality~\eqref{eq:weighted-sum-f} with $k=q$, and after some tedious calculation, for any $t \geq q$, we arrive at
    \begin{align*}
        F(z_t) - F_{\opt} 
        &\leq \frac{2(\sigma_0 L_f+L_g)D^2 t}{S_t}
        + \frac{(q+1)q \left( \Phi_q(x_q) - \Phi_q(x_{\opt}) \right)}{S_t} \\
        &\quad + \sum_{i \in [t]} \frac{4iD \|\widehat{\grad \Phi}_{i-1}-\grad \Phi_{i-1}(x_{i-1})\|}{S_t}.
    \end{align*}
    Using the inequality $S_t \geq (t+1) t \sigma_t$, which holds thanks to \cref{IRCG-C1}, and applying \cref{lemma:initial-phase} and \cref{prop:combine-bound-IRSCG}, with probability at least $1 - \delta$, we have
    \begin{align*}
        &F(z_t) - F_{\opt} \\
        &\leq \frac{(q+1)\left(\max\left\{\Phi_0(x_0)-\Phi_0(x_{\opt}),0\right\}+(L_f \sigma_0+L_g)D^2\left(\frac{1}{2}+16\sqrt{\log\left(\frac{16t^2}{\delta}\right)}\right) \log(q)\right)}{(t+1)t\sigma_t}\\
        &\quad +\frac{2(\sigma_0 L_f+L_g)D^2(t-q)+ \left(8\sqrt{2}t-4q\right)(\sigma_0L_f+L_g)D^2  \sqrt{\log\left(\frac{16t^2}{\delta}\right)}}{(t+1)t\sigma_t}
    \end{align*}
    for every $t \geq q$.
    This completes the proof of the first claim.

    For the second claim, we follow the same procedure. In particular, applying Jensen's inequality with respect to the convex function $G$ to the left-hand side of \eqref{eq:FSbound-inner} and using the inequality $S_t \geq (t + 1) t \sigma_t$, with probability at least $1 - \delta$, we have
    \[ G(z_t) - G_{\opt} \leq \frac{C_{\delta,q}}{t(t+1)\sigma_t}\left((t+1)t\sigma_t^2+\sum_{i \in [t]/[q]}(i+1)i(\sigma_{i-1}-\sigma_i)\sigma_i\right). \]
    The proof concludes by using the definition of $V_q$.
\end{proof}

\begin{lemma} \label{lemma:FSexample-of-lambda}
    Consider the sequence $\sigma_t := \varsigma (t+1)^{-p}$ for $t \geq 0$ and the quantities defined in~\cref{eq:FSCV}. If $p \in (0, 1)$, then the sequence $\{\sigma_t\}_{t \geq 0}$ satisfies \cref{IRCG-C1}, \ref{IRCG-C2} and \ref{IRCG-C4} with $L=p$. Furthermore,
    \[C_q \leq (1+2p)\left(F_{\opt}-\min_{x \in X} F(x)\right) + \frac{2(\varsigma L_f +L_g)D^2}{\varsigma}, \quad V_q \leq \frac{2p}{\min\{1,2(1-p)\}}\]
    and
    \begin{align*}
        \bar{C}_{\delta,q} &\leq \max_{x \in X} F(x)-f(x_0)+\frac{\max_{x \in X} g(x)-g(x_0)}{\varsigma}\\
        &\quad+\frac{2}{1-p}\left(L_f+\frac{L_g}{\varsigma}\right)D^2 \left(\frac{1}{2}+16\sqrt{\frac{2}{e(1-p)}+\log\left(16/\delta\right)}\right)\\
        &\quad+8\sqrt{2}\left(L_f+\frac{L_g}{\varsigma}\right)D^2\sqrt{\frac{1}{e(1-p)}+\log\left(16/\delta\right)}.
    \end{align*}
\end{lemma}
\begin{proof}[Proof of \cref{lemma:FSexample-of-lambda}]
    We observe that $C_q, V_q$ can be bounded from above by $C,V$ defined in \eqref{eq:CV} under the conditions in \cref{lemma:Sexample-of-lambda}. Therefore, we focus on $\bar{C}_{\delta,q}$ for the remainder of the proof. Using the basic inequality $\log(x)/x \leq 1/e$ for any $x > 0$, observe that for any $t \geq q$
    \begin{align*}
        &\frac{(q+1)\left(\max\left\{\Phi_0(x_0)-\Phi_0(x_{\opt}),0\right\}+(L_f \sigma_0+L_g)D^2\left(\frac{1}{2}+16\sqrt{\log\left(\frac{16t^2}{\delta}\right)}\right) \log(q)\right)}{(t+1)t\sigma_t}\\
        &\leq  \frac{\max\left\{\Phi_0(x_0)-\Phi_0(x_{\opt}),0\right\}+(L_f \varsigma+L_g)D^2\left(\frac{1}{2}+16\sqrt{\log\left(\frac{16t^2}{\delta}\right)}\right) \log(t)}{\varsigma t^{1-p}}.
    \end{align*}
    We have 
    $$\Phi_0(x_0)-\Phi_0(x_{\opt}) = \sigma_0 \left(f(x_0)-F_{\opt}\right)+g(x_0)-G_{\opt} \leq \varsigma \left(\max_{x \in X} F(x)-F_{\opt}\right)+\max_{x \in X} g(x)-G_{\opt},$$
    which implies
    $$\frac{\max\left\{\Phi_0(x_0)-\Phi_0(x_{\opt}),0\right\}}{\varsigma t^{1-p}} \leq \varsigma \left(\max_{x \in X} F(x)-F_{\opt}\right)+\max_{x \in X} g(x)-G_{\opt}.$$
    We also have
    $$\frac{\log(t)}{t^{(1-p)/2}} = \frac{2}{1-p} \frac{\log(t^{(1-p)/2})}{t^{(1-p)/2}} \leq \frac{2}{e(1-p)},$$
    and
    $$\frac{1}{t^{1-p}} \log\left(\frac{16t^2}{\delta}\right) = \frac{1}{t^{1-p}} \log\left(16/\delta\right)+ \frac{2}{1-p} \frac{\log(t^{1-p})}{t^{1-p}} \leq \log\left(16/\delta\right)+\frac{2}{e(1-p)}.$$
    Therefore,
    \begin{align*}
        &\frac{(L_f \varsigma+L_g)D^2\left(\frac{1}{2}+16\sqrt{\log\left(\frac{16t^2}{\delta}\right)}\right) \log(t)}{\varsigma t^{1-p}}\\
        &\leq \frac{2}{1-p}\left(L_f+\frac{L_g}{\varsigma}\right)D^2 \left(\frac{1}{2}+16\sqrt{\frac{2}{e(1-p)}+\log\left(16/\delta\right)}\right)
    \end{align*}
    Similarly, we have
    \begin{align*}
        \frac{\left(8\sqrt{2}t-4q\right)(\sigma_0L_f+L_g)D^2  \sqrt{\log\left(\frac{16t^2}{\delta}\right)}}{(t+1)t\sigma_t}
        \leq 8\sqrt{2}\left(L_f+\frac{L_g}{\varsigma}\right)D^2\sqrt{\frac{1}{e(1-p)}+\log\left(16/\delta\right)}
    \end{align*}
    This concludes the proof.
\end{proof}

\begin{proof}[Proof of \cref{theorem:FSconvergence-rate}]
The first part is an immediate consequence of \cref{lemma:FStwoupperbounds,lemma:FSexample-of-lambda}. Thus, we devote this proof for the second part by proving that a slightly more general result that as long as \crefrange{IRCG-C1}{IRCG-C2} and \cref{IRCG-C4} hold, the asymptotic convergence remains.

    \cref{lemma:FSlastiterate} implies that $\lim_{t \to \infty} G(x_t)=G_{\opt}$ with probability at least $1-\delta$. As $\delta$ can be arbitrarily small, we conclude that $\lim_{t \to \infty }G(x_t) =G_{\opt}$, almost surely. This implies that any limit point of $\{x_t\}_{t \geq 0}$ is in $X_{\opt}$ almost surely. Since $F$ is convex, hence lower semi-continuous over $X$, and by definition of $F_{\opt}$, we have $\liminf_{t \to \infty} F(x_t) \geq F_{\opt}$ almost~surely.

    Besides, by combining \cref{prop:bounds}, \cref{lemma:initial-phase} and \cref{prop:FSerror}, with probability at least $1 - \delta$, we have
    \begin{align*}
        &F(x_t)-F_{\opt}\\&\leq \frac{(q+1)\left(\max\left\{\Phi_0(x_0)-\Phi_0(x_{\opt}),0\right\}+(L_f \sigma_0+L_g)D^2\left(\frac{1}{2}+16\sqrt{\log\left(\frac{16t^2}{\delta}\right)}\right) \log(q)\right)}{(t+1)t\sigma_t}\\
        &\quad +\frac{2(\sigma_0 L_f+L_g)D^2(t-q)+ \left(8\sqrt{2}t-4q\right)(\sigma_0L_f+L_g)D^2  \sqrt{\log\left(\frac{16t^2}{\delta}\right)}}{(t+1)t\sigma_t}\\
        &\quad+ \frac{ \sum_{i \in [t]}(i+1)i(\sigma_{i-1}-\sigma_i)\max\{F_{\opt}-F(x_i),0\}}{t(t+1)\sigma_t}.
    \end{align*}
    Similar to the proof of \cref{theorem:Sconvergence-rate}, all terms on the right-hand side converge to $0$ as $t \to \infty$, implying that $\limsup_{t \to \infty} (F(x_t) - F_{\opt}) \leq 0$ which holds with probability at least $1-\delta$ for any $\delta \in (0,1)$. Thus, $\limsup_{t \to \infty} (F(x_t) - F_{\opt}) \leq 0$ almost surely. Combined with $\liminf_{t \to \infty} (F(x_t) - F_{\opt}) \geq 0$, this yields $\lim_{t \to \infty} F(x_t) = F_{\opt}$ almost surely, concluding the proof. 
\end{proof}

\subsection{\texorpdfstring{Proof of \cref{theorem:nonconvex-finite-sum-convergence}}{Proof of Theorem 4}}
Here, we restate \cref{theorem:nonconvex-finite-sum-convergence} with exact upper bounds on stationary gaps.
\begin{namedtheorem}[~\ref{theorem:nonconvex-finite-sum-convergence}]
    Let $\{x_t\}_{t \geq 0}$ denote the iterates generated by \cref{alg:IR-FSCG} with stepsizes $\alpha_t = \log(q+1)/(q+1)$ for any $ 0\leq t \leq q$, $\alpha_t = 1/(t+1)^\omega$ for any $t > q$ and $S=q$, and regularization parameters $\sigma_t = \varsigma(\max\{t,q+1\}+1)^{-p}$ with $p = \frac{1}{2}$ and $w = \frac{3}{4}$. If \cref{ass:smooth-functions} holds with the exception that $F$ may be nonconvex and the sequence $\{ \beta_t \}_{t \geq 0}$ is defined as in~\eqref{eq:beta}, then with probability at least $1-\delta$, for every $t \geq q$, it (jointly) holds that
     \begin{align*}
            \frac{\sum_{i = 0}^{t} \beta_i \CF(x_i)}{\sum_{i = 0}^{t} \beta_i}
            &\leq \left( \frac{G(x_0)-G_{\text{opt}}}{(q+1)^{1-\omega}} + \left( 2\varsigma(q+1)^{\omega-p} +\varsigma(q+2)^{-p} \right) \sup_{z \in X} |F(z)| \right) \left( a + b t^{1-p} \right)^{-1} \\
            &\quad+\frac{(\sigma_0L_f+L_g)D^2\log(q+1)}{2(q+1)^{1-w}}\left(1+16\sqrt{\log\left(\frac{8(q+2)^2}{\delta}\right)}\right) \left( a + b t^{1-p} \right)^{-1} \\
            &\quad+\frac{8(\sigma_0L_f+L_g) D (t-q)^{1-w}}{1-w}\sqrt{\log\left(\frac{8(t+2)^2}{\delta}\right)} \left( a + b t^{1-p} \right)^{-1} \\
            &\quad+ 2\varsigma\sup_{z \in X}|F(z)| (t+1)^{\omega-p} \left( a + b t^{1-p} \right)^{-1} \\
            &\quad+ \frac{(\varsigma L_f+L_g)D^2 }{1-\omega} (t+1)^{1-\omega} \left( a + b t^{1-p} \right)^{-1},\\
            \CG(x_t) 
            &\leq \left(\frac{G(x_0) -G_{\text{opt}}}{(q+1)^{1-\omega}} + \left( 2 \varsigma(q+1)^{\omega-p} + \varsigma(q+2)^{-p} \right) \sup_{z \in X} |F(z)|\right)t^{-w}\\
            &\quad+\frac{(\varsigma L_f+L_g)D^2\log(q+1)}{2(q+1)^{1-w}}\left(1+16\sqrt{\log\left(\frac{8(q+2)^2}{\delta}\right)}\right)t^{-w} \\
            &\quad+\frac{8(\varsigma L_f+L_g) D^2 (t-1-q)^{1-w}t^{-w}}{1-w}\sqrt{\log\left(\frac{8(t+1)^2}{\delta}\right)}+\frac{(\varsigma L_f+L_g)D^2 }{1-\omega} t^{1-2\omega} \!\!\! \\
            &\quad +2 \varsigma\sup_{z \in X}|F(z)| t^{-p} + \left(\varsigma(q+1)^{\omega-p} +\varsigma \frac{(t+1)^{1-p}-q^{1-p}}{1-p}\right)t^{-w}\sup_{z \in X} |\CF(z)|,
        \end{align*}
        where $\beta_i$'s are defined in \eqref{eq:beta} and $a,b$ are constants that satisfy
        $$\varsigma\frac{(q+2)^{w-p}}{3}+\varsigma \frac{t^{1-p}-(q+1)^{1-p}}{1-p}  = a + b t^{1-p}.$$
       
\end{namedtheorem}

The proof relies on several intermediate results.
We begin with the following probabilistic bound.

\begin{lemma} \label{lemma:nonconvex-FSestimator}
    Let $\{x_t\}_{t \geq 0}$ denote the iterates generated by \cref{alg:IR-FSCG} with $\alpha_t \!=\! \alpha$ for any $ 0\leq t \leq q$ and $\alpha_t =(t+1)^{-\omega}$ with $\omega > \frac{1}{2}$ for any $t > q$. If \cref{ass:smooth-functions} holds with the exception that $F$ may be nonconvex, then for any $t \geq 0$, given $\delta \in (0,1)$, with probability at least $1-\delta$, it (jointly) holds~that
    \begin{equation}\label{eq:nonconvex-FSnoise1}
        \begin{aligned}
        \left\| \widehat{\grad F}_t-\nabla F(x_t) \right\| &\leq 8L_fD  \sqrt{\frac{\alpha^2 q}{S}} \sqrt{\log\left(8/\delta\right)}\\
        \left\| \widehat{\grad G}_t-\nabla G(x_t) \right\| &\leq 8L_gD \sqrt{\frac{\alpha^2 q}{S}} \sqrt{\log\left(8/\delta\right)},
        \end{aligned}
    \end{equation}
    provided that $0\leq t < q$, and
    \begin{equation}\label{eq:nonconvex-FSnoise2}
    \begin{aligned}
    \left\| \widehat{\grad F}_t-\nabla F(x_t) \right\| &\leq 8L_fD\sqrt{\frac{(t^{1-2w}-s_t^{1-2w})}{S(1-2w)}} \sqrt{\log\left(8/\delta\right)}\\
    \left\| \widehat{\grad G}_t-\nabla G(x_t) \right\| &\leq 8L_gD\sqrt{\frac{(t^{1-2w}-s_t^{1-2w})}{S(1-2w)}} \sqrt{\log\left(8/\delta\right)},
    \end{aligned}
    \end{equation}
    where $s_t := q \lfloor t/q\rfloor$, provided that $t > q$.
\end{lemma}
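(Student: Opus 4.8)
The plan is to mirror the proof of \cref{lemma:FSestimator} almost verbatim, since the \texttt{SPIDER} error $\widehat{\grad F}_t - \nabla F(x_t)$ admits the same martingale decomposition whether or not $F$ is convex; only the step-size-dependent variance proxy changes. First I would note that at every recomputation index $t = s_t$ (where $s_t := q\lfloor t/q\rfloor$) the full gradient is evaluated, so $\widehat{\grad F}_t = \nabla F(x_t)$ and the claim is vacuous. For $t \neq s_t$, define, for each $i$ in the mini-batch $\cS_s$ drawn at step $s$,
\[ \epsilon_{s,i} := \tfrac{1}{S}\bigl(\nabla f_i(x_s) - \nabla f_i(x_{s-1}) - \nabla F(x_s) + \nabla F(x_{s-1})\bigr), \]
and unwind the recursive estimator to obtain $\widehat{\grad F}_t - \nabla F(x_t) = \sum_{s=s_t+1}^{t}\sum_{i \in \cS_s}\epsilon_{s,i}$, a zero-mean martingale. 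Using $L_f$-smoothness (\cref{ass:f-smooth}) together with $\|x_s - x_{s-1}\| = \alpha_{s-1}\|v_{s-1}-x_{s-1}\| \leq \alpha_{s-1} D$, each increment obeys $\|\epsilon_{s,i}\| \leq \tfrac{2 L_f D}{S}\alpha_{s-1}$, so the per-step squared-norm contribution is $\tfrac{4 L_f^2 D^2}{S}\alpha_{s-1}^2$.

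The only genuinely new step is recomputing the variance proxy $\sum_{s=s_t+1}^{t}\alpha_{s-1}^2$ for the two step-size regimes. In the warm-up phase $0 \leq t < q$ the step size is constant, $\alpha_{s-1}=\alpha$, and since the window contains at most $q$ indices we get $\sum_{s=s_t+1}^{t}\alpha_{s-1}^2 \leq q\alpha^2$. In the main phase $t > q$ we have $\alpha_{s-1} = s^{-\omega}$, so monotonicity of $u \mapsto u^{-2\omega}$ gives the Riemann bound
\[ \sum_{s=s_t+1}^{t} s^{-2\omega} \leq \int_{s_t}^{t} u^{-2\omega}\,\mathrm{d}u = \frac{t^{1-2\omega}-s_t^{1-2\omega}}{1-2\omega}, \]
where the assumption $\omega > 1/2$ makes both numerator and denominator negative, so the ratio is a genuine positive variance proxy. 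Feeding these into the martingale concentration inequality of \citep[Theorem~3.5]{Pinelis1994}, which produces a tail of the form $4\exp(-\lambda^2/(\tfrac{16 L_f^2 D^2}{S}\sum_{s}\alpha_{s-1}^2))$, I would set the right-hand side equal to $\delta/2$, invert for $\lambda$, and read off bounds of the stated shape $L_fD\sqrt{\alpha^2 q/S}\,\sqrt{\log(8/\delta)}$ and $L_fD\sqrt{(t^{1-2\omega}-s_t^{1-2\omega})/(S(1-2\omega))}\,\sqrt{\log(8/\delta)}$. The identical argument applied to $g_i$ yields the companion bounds with $L_g$, and a union bound over the two events delivers the joint guarantee at confidence $1-\delta$.

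The computation is essentially routine once \cref{lemma:FSestimator} is in hand; the one point requiring care is the boundary index $s=q+1$ in the main phase, where the increment $x_{q+1}-x_q$ is controlled by the warm-up step $\alpha_q=\alpha$ rather than $(q+1)^{-\omega}$, so the clean formula $\alpha_{s-1}=s^{-\omega}$ fails at the left endpoint of the first post-warm-up window. I expect this mismatch to be absorbed either by checking $\alpha^2 \le \int_{q}^{q+1}u^{-2\omega}\,\mathrm{d}u$ for the prescribed $\alpha=\log(q+1)/(q+1)$, or by slightly enlarging the integration window; either way it only affects constants, not the rate. Keeping the sign bookkeeping in the integral straight (so that the proxy remains positive under $\omega>1/2$) is the other place where I would be deliberate rather than mechanical.
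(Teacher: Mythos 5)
Your proposal is correct and follows essentially the same route as the paper's proof, which likewise reuses the martingale decomposition, the increment bound $\|\epsilon_{s,i}\| \leq 2L_fD\alpha_{s-1}/S$, and the Pinelis concentration step from \cref{lemma:FSestimator}, only recomputing the variance proxies $q\alpha^2$ and $\int_{s_t}^{t} u^{-2\omega}\,\mathrm{d}u = (t^{1-2\omega}-s_t^{1-2\omega})/(1-2\omega)$ for the two stepsize regimes before setting each tail to $\delta/2$ and union-bounding over $F$ and $G$. Your attention to the boundary index $s=q+1$ (where $\alpha_q=\alpha$ rather than $(q+1)^{-\omega}$) is in fact more careful than the paper's own write-up, whose displayed sum carries a slack factor of $4$ inherited from the convex-case stepsize $2/(s+1)$ that silently absorbs this mismatch.
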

\begin{proof}[Proof of \cref{lemma:nonconvex-FSestimator}]
    Recall from the proof of \cref{lemma:FSestimator} that, employing the concentration inequality in \citep[Theorem 3.5]{Pinelis1994} yields
    \begin{align*}
        P\left( \left\|\widehat{\grad F}_t-\nabla F(x_t)\right\| \geq \lambda\right) &\leq 4\exp\left(-\frac{\lambda^2}{4\sum_{s=s_t+1}^{t}\frac{4L_f^2D^2}{S}\alpha_{s-1}^2}\right).
    \end{align*}
    If $1 \leq t \leq q$, one can easily show that
    \begin{align*}
        P\left( \left\|\widehat{\grad F}_t-\nabla F(x_t)\right\| \geq \lambda\right) \leq 4\exp\left(-\frac{\lambda^2S}{16L_f^2D^2 q\alpha^2}\right).
    \end{align*}
    If $t > q$, we observe that
    $$\sum_{s = s_t+1}^t \alpha_{s-1}^2 = 4\sum_{s=s_t+1}^t \frac{1}{s^{2w}} \leq 4\int_{s_t}^t \frac{du}{s^{2w}}=\frac{4}{2w-1}(s_t^{1-2w}-t^{1-2w}).$$
    Thus, we have
    $$P\left( \left\|\widehat{\grad F}_t-\nabla F(x_t)\right\| \geq \lambda\right) \leq4\exp\left(-\frac{\lambda^2S(1-2w)}{64L_f^2D^2(t^{1-2w}-s_t^{1-2w})}\right).$$
    Arguing similarly for $\widehat{\grad G}_t$, we have
    $$P\left( \left\|\widehat{\grad G}_t-\nabla G(x_t)\right\| \geq \lambda\right) \leq 4\exp\left(-\frac{\lambda^2S}{16L_g^2D^2 q\alpha^2}\right),$$
    if $0 \leq t \leq q$
    and
    $$P\left( \left\|\widehat{\grad G}_t-\nabla G(x_t)\right\| \geq \lambda\right) \leq 4\exp\left(-\frac{\lambda^2S(1-2w)}{64L_g^2D^2(t^{1-2w}-s_t^{1-2w})}\right),$$
    if $t > q$.
    Given $\delta \in (0,1)$ and $1 \leq t \leq q$, setting the right hand side $=\delta/2$ and solving for $\lambda$ yields
    $$P\left( \left\| \widehat{\grad F}_t-\nabla F(x_t) \right\| \geq 8L_fD \sqrt{\frac{\alpha^2 q}{S}} \sqrt{\log\left(8/\delta\right)}\right) \leq \frac{\delta}{2}$$
    and
    $$P\left( \left\| \widehat{\grad G}_t-\nabla G(x_t) \right\| \geq 8L_gD  \sqrt{\frac{\alpha^2 q}{S}} \sqrt{\log\left(8/\delta\right)}\right) \leq \frac{\delta}{2}.$$
    Applying union bound, we deduce that with probability at least $1-\delta$, \eqref{eq:FSnoise1} holds.
    Arguing similarly for $t > q$, we also deduce that \eqref{eq:FSnoise2} holds with probability at least $1-\delta$.
\end{proof}

We next analyze the sequence $\{x_t\}_{t \geq 0}$ generated by \cref{alg:IR-FSCG} over the first $q$ iterations.

\begin{lemma} \label{lemma:nonconvex-initial-phase}
    Let $\{x_t\}_{t \geq 0}$ denote the iterates generated by \cref{alg:IR-FSCG} with $\alpha_t = \alpha$ for any $ 0\leq t \leq q$ and $S=q$. If \cref{ass:smooth-functions} holds with the exception that $F$ may be nonconvex and \cref{IRCG-C4} is satisfied, then given $\delta \in (0,1)$, with probability at least $1-\delta \sum_{0 \leq i \leq q}1/(i+1)(i+2)$, for any $0 \leq t \leq q+1$, it holds that
    \begin{equation*} 
    \begin{aligned}
        \CG(x_t) &\leq (1-\alpha)^t \CG(x_0)+2\sigma_0\sup_{z \in X} |F(z)| -\sigma_0 \sum_{i = 0}^{t-1} (1-\alpha)^{t-i} \alpha \CF(x_i) \\
        &\quad + \frac{(\sigma_0L_f+L_g)D^2\alpha}{2} \left(1 + 16 \sqrt{\log\left(\frac{8(t+2)^2}{\delta}\right)}\right).
    \end{aligned}
    \end{equation*}
\end{lemma}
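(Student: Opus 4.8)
The plan is to specialize the single-step inequality underlying \cref{lemma:nonconvex-recursive-rule} to the constant-parameter initial phase and then unwind it with the geometric factor $(1-\alpha)$ in place of the polynomial weights used for $t \geq q$. By \cref{IRCG-C4} we have $\sigma_0 = \dots = \sigma_q$, so $\Phi_i = \sigma_0 F + G$ is fixed over $0 \leq i \leq q$, and the stepsize equals the constant $\alpha$. First I would reuse the inequality \eqref{eq:main:nonconvex} — obtained from the smoothness bound \eqref{eq:phi:smoothness} together with $\min_{v \in X}(\sigma \nabla F(x) + \nabla G(x))^\top(v-x) \leq -\sigma\CF(x) - \CG(x)$, and crucially \emph{not} using convexity of $F$ — and rearrange it into the one-step recursion
\begin{equation*}
\CG(x_{i+1}) \leq (1-\alpha)\CG(x_i) + \sigma_0\left(F(x_i)-F(x_{i+1})\right) - \alpha\sigma_0\CF(x_i) + e_i,
\end{equation*}
where $e_i := 2D\alpha\|\widehat{\grad\Phi}_i - \grad\Phi_i(x_i)\| + \tfrac{(\sigma_0 L_f + L_g)D^2\alpha^2}{2}$ collects the stochastic and deterministic smoothness errors.

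Next I would iterate this recursion from $i=0$ to $t-1$, producing $(1-\alpha)^t\CG(x_0)$ plus three geometrically weighted sums: the telescoping term $\sigma_0\sum_i (1-\alpha)^{t-1-i}(F(x_i)-F(x_{i+1}))$, the stationarity term $-\alpha\sigma_0\sum_i (1-\alpha)^{t-1-i}\CF(x_i)$, and the error term $\sum_i(1-\alpha)^{t-1-i}e_i$. The decisive simplification is the telescoping sum: after summation by parts its positive-weight part is a \emph{convex combination} of $F(x_0),\dots,F(x_{t-1})$, since the weights $(1-\alpha)^{t-1}$ and $\alpha(1-\alpha)^{t-1-k}$ sum to exactly $1$; hence it is bounded by $\sup_{z\in X}|F(z)|$, and together with the residual $-F(x_t)$ term this gives the clean bound $2\sigma_0\sup_{z\in X}|F(z)|$. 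The stationarity sum is carried along unchanged, matching the $\CF$-sum in the claim.

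For the error sum I would split $e_i$. Using $\sum_{i=0}^{t-1}(1-\alpha)^{t-1-i} \leq 1/\alpha$, the deterministic piece contributes exactly $\tfrac{(\sigma_0 L_f + L_g)D^2\alpha}{2}$, the ``$1$'' inside the parenthesis. For the stochastic piece I would invoke \cref{lemma:nonconvex-FSestimator} in the regime $0 \leq i < q$ with $S=q$, where $\sqrt{\alpha^2 q/S} = \alpha$, so $\|\widehat{\grad\Phi}_i - \grad\Phi_i(x_i)\| \leq 8(\sigma_0 L_f + L_g)D\alpha\sqrt{\log(8/\eta_i)}$ with probability $1-\eta_i$. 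Choosing $\eta_i = \delta/((i+1)(i+2))$ and taking a union bound over $0 \leq i \leq q$ incurs failure probability $\delta\sum_{0\leq i\leq q}1/((i+1)(i+2))$, matching the statement; bounding $\log(8/\eta_i) \leq \log(8(t+2)^2/\delta)$ for the active indices $i\leq t-1$ and reusing the geometric sum $\leq 1/\alpha$ yields the $16\sqrt{\log(8(t+2)^2/\delta)}$ factor. Collecting the four contributions gives the asserted inequality.

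The main obstacle is the telescoping $F$-difference: because the stepsize is a fixed $\alpha$ rather than the decaying $\alpha_t$ used for $t \geq q$, the increments $F(x_i)-F(x_{i+1})$ carry geometric weights, and a naive term-by-term bound both loses a constant and destroys the cancellation. Recognizing that the positive weights sum to one — so the weighted $F$-values collapse into a single quantity controlled by $\sup_{z\in X}|F(z)|$ — is what keeps the constant at $2\sigma_0$ and is the one genuinely non-mechanical step. A secondary, routine care point is aligning the per-iteration confidence levels $\eta_i$ so that the union bound simultaneously reproduces the stated success probability and collapses the per-index logarithms into the single $\log(8(t+2)^2/\delta)$.
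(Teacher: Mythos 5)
Your proposal follows the paper's proof essentially step for step: the same one-step recursion \cref{eq:main:nonconvex} specialized to the constant stepsize $\alpha$ and fixed $\sigma_0$ under \cref{IRCG-C4}, the same geometric unrolling producing $(1-\alpha)^t\,\CG(x_0)$, the same summation-by-parts treatment of the telescoping $F$-differences (whose positive weights sum to one, yielding the $2\sigma_0\sup_{z\in X}|F(z)|$ bound), and the same error control via \cref{lemma:nonconvex-FSestimator} with $\eta_i = \delta/((i+1)(i+2))$, a union bound, and the geometric-series bound $\sum_{i=0}^{t-1}(1-\alpha)^{t-1-i}\leq 1/\alpha$. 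The only discrepancy is an immaterial factor-of-two wobble in the absolute constant of the stochastic error term, where the paper's own accounting is equally loose, so your argument is correct and essentially identical to the paper's.
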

\begin{proof}[Proof of \cref{lemma:nonconvex-initial-phase}]
    Applying the bound~\eqref{eq:main:nonconvex} and after a straightforward re-arrangement, under \cref{IRCG-C4}, for any $0 \leq i \leq t$, we obtain
    \begin{align*}
        \CG(x_{i+1}) 
        &\leq (1-\alpha) \CG(x_i) + \sigma_0 (F(x_i) - \alpha \CF(x_{i}) - F(x_{i+1})) \\
        &\quad + 2D \alpha \| \nabla \Phi_0(x_i) -\widehat{\grad \Phi}_i \| + \frac{(L_f \sigma_0 + L_g) D^2 \alpha^2}{2}. 
    \end{align*}
    Given $\delta \in (0,1)$, we apply \cref{lemma:nonconvex-FSestimator} to deduce that with probability at least $1- \delta/(i+1)(i+2)$, we have 
    \begin{align*}
        \CG(x_{i+1}) 
        &\leq (1 - \alpha) \CG(x_i) + \sigma_0(F(x_i) - \alpha \CF(x_i) -F(x_{i+1})) \\
        &\quad + 8(\sigma_0L_f+L_g)D^2 \alpha^2 \sqrt{\log\left(\frac{8 (i+1) (i+2)}{\delta}\right)} + \frac{(L_f \sigma_0 + L_g) D^2 \alpha^2}{2}. 
    \end{align*}
    Applying the union bound, for any $0 \leq t \leq q$, with probability at least $1-\delta\sum_{0 \leq i \leq t}1/(i+1)(i+2)$, we obtain
    \begin{align*}
        \CG(x_{t+1}) 
        &\!\leq\! (1 - \alpha)^{t+1} \CG(x_0) + \sum_{i =0}^{t} \sigma_0(F(x_i) -\alpha\CF(x_i)-F(x_{i+1})) (1 - \alpha)^{t-i} \\
        &\!+\! 8(\sigma_0 L_f \!+\! L_g) D^2 \alpha^2 \sum_{i =0}^{t} (1 - \alpha)^{t-i} \sqrt{\log\left(\tfrac{8(i+1)(i+2)}{\delta}\right)} \!+\! \tfrac{(L_f \sigma_0 \!+\! L_g) D^2 \alpha^2}{2} \sum_{i=0}^{t} (1 - \alpha)^{t-i}. 
    \end{align*}
    Using the geometric series bound $\sum_{i =0}^{t} (1 - \alpha)^{t-i} \leq 1 / \alpha$ and observing that
    \begin{align*}
        &\sum_{i =0}^{t} (F(x_i) - F(x_{i+1})) (1 - \alpha)^{t-i} \\
        =& \sum_{i \in [t]} F(x_i) \left((1 - \alpha)^{t-i} - (1 - \alpha)^{t+1-i} \right) + F(x_0)(1-\alpha)^t - F(x_{t+1}) \\
        =& \sum_{i \in [t]} \alpha F(x_i) (1 - \alpha)^{t-i} + F(x_0)(1-\alpha)^t - F(x_{t+1}) \\
        \leq& \sup_{z \in X} |F(z)| \textstyle \left( \sum_{i \in [t]} \alpha (1 - \alpha)^{t-i} + (1-\alpha)^t + 1 \right)
        \leq 2\sup_{z \in X} |F(z)|
    \end{align*}
    conclude the proof.
\end{proof}


We finally analyze the sequence $\{x_t\}_{t \geq 0}$ generated by \cref{alg:IR-FSCG} after the first $q$ iterations.

\begin{proposition} \label{prop:nonconvex-FSerror}
    Let $\{x_t\}_{t \geq 0}$ denote the iterates generated by \cref{alg:IR-FSCG} with $\alpha_t = \alpha$ for any $0 \leq t < q$, $\alpha_t = (t+1)^{-w}$ with $\omega > \frac{1}{2}$ for any $t \geq q$ and $S=q$. If \cref{ass:smooth-functions} holds with the exception that $F$ may be nonconvex and the parameters $\{\sigma_t\}_{t \geq 0}$ are non-increasing and positive, then given $\delta \in (0,1)$, with probability at least $1-\delta\sum_{i \in [t] \setminus [q]} \frac{1}{(i+1)(i+2)}$, for any $t > q$, it holds that
    \begin{equation} \label{eq:nonconvex-FSaccum-error}
        \sum_{i \in [t] \setminus [q]} \left\| \widehat{\grad \Phi}_i-\nabla \Phi_i(x_i) \right\| \leq \frac{8(\sigma_0L_f+L_g) D (t-q)^{1-w}}{1-w}\sqrt{\log\left(\frac{8(t+2)^2}{\delta}\right)}.
    \end{equation}
\end{proposition}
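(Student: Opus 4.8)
The plan is to follow the template of the convex finite-sum estimate \cref{prop:FSerror}, substituting the sharper per-iteration variance bounds of \cref{lemma:nonconvex-FSestimator} that are tailored to the stepsize $\alpha_t = (t+1)^{-\omega}$. First I would bound each summand pointwise. By the triangle inequality and the definition of $\Phi_i = \sigma_i F + G$, together with $\sigma_i \le \sigma_0$ (the $\{\sigma_t\}$ are non-increasing and positive), we have for every $i$
\[
\|\widehat{\grad \Phi}_i - \nabla \Phi_i(x_i)\| \le \sigma_0 \|\widehat{\grad F}_i - \nabla F(x_i)\| + \|\widehat{\grad G}_i - \nabla G(x_i)\|.
\]
Invoking \cref{lemma:nonconvex-FSestimator} with $S = q$ and confidence parameter $\eta_i := \delta/((i+1)(i+2))$, and recalling that $1 - 2\omega < 0$, this yields with probability at least $1 - \eta_i$, for each $i > q$,
\[
\|\widehat{\grad \Phi}_i - \nabla \Phi_i(x_i)\| \le 8(\sigma_0 L_f + L_g) D \,\sqrt{\tfrac{i^{1-2\omega} - s_i^{1-2\omega}}{q(1 - 2\omega)}}\, \sqrt{\log\!\big(8(i+1)(i+2)/\delta\big)},
\]
where $s_i := q\lfloor i/q\rfloor$.

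The key step is to tame the block-structured factor. I would rewrite it via the integral identity $\frac{i^{1-2\omega} - s_i^{1-2\omega}}{1 - 2\omega} = \int_{s_i}^i u^{-2\omega}\,\mathrm{d}u$. Since $u \mapsto u^{-2\omega}$ is decreasing and the gap to the last full-gradient checkpoint satisfies $i - s_i < q$, this integral is at most $(i-s_i)\,s_i^{-2\omega} \le q\, s_i^{-2\omega}$, so the factor is bounded by $s_i^{-\omega}$. Next I would apply a union bound over $i \in [t]\setminus[q]$, so that with probability at least $1 - \delta\sum_{i \in [t]\setminus[q]} \frac{1}{(i+1)(i+2)}$ all the pointwise estimates hold jointly. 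Bounding $(i+1)(i+2) \le (t+2)^2$ for $i \le t$ lets me pull out the common $\sqrt{\log(8(t+2)^2/\delta)}$ factor, reducing the task to controlling $\sum_{i=q+1}^t s_i^{-\omega}$.

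Finally, using $s_i = q\lfloor i/q\rfloor \ge i - q$ gives $s_i^{-\omega} \le (i-q)^{-\omega}$, and after reindexing $j = i - q$ the integral comparison $\sum_{j=1}^{m} j^{-\omega} \le \int_0^m u^{-\omega}\,\mathrm{d}u = \frac{m^{1-\omega}}{1-\omega}$ (valid since $\omega < 1$, which holds in the regime of interest, e.g. $\omega = \tfrac34$ in \cref{theorem:nonconvex-finite-sum-convergence}) with $m = t - q$ produces exactly $\frac{(t-q)^{1-\omega}}{1-\omega}$, giving the claimed bound. The main obstacle is this second paragraph: the per-iteration variance depends on the distance $i - s_i$ to the most recent checkpoint, and one must convert this \texttt{SPIDER} block structure into a single summable quantity without losing the rate. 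The integral representation of $i^{1-2\omega} - s_i^{1-2\omega}$, combined with the two monotone estimates (bounding the integrand by its value at $s_i$, then bounding $s_i$ below by $i-q$), is precisely what collapses the blocks into the target rate; keeping the constants aligned with the stated $8(\sigma_0 L_f + L_g)D/(1-\omega)$ prefactor requires care but is otherwise routine.
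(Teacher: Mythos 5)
Your proposal is correct and follows essentially the same route as the paper's proof: the triangle inequality with $\sigma_i \le \sigma_0$, the per-iteration bound from \cref{lemma:nonconvex-FSestimator} with $\eta_i = \delta/((i+1)(i+2))$, the estimates $i - s_i \le q$ and $s_i \ge i-q$ to collapse the \texttt{SPIDER} block factor to $(i-q)^{-\omega}$, a union bound with $(i+1)(i+2) \le (t+2)^2$, and the Riemann-sum comparison $\sum_{j=1}^{t-q} j^{-\omega} \le (t-q)^{1-\omega}/(1-\omega)$. The only cosmetic difference is that you justify the block estimate via the integral identity $\frac{i^{1-2\omega}-s_i^{1-2\omega}}{1-2\omega} = \int_{s_i}^{i} u^{-2\omega}\,\mathrm{d}u$ while the paper invokes the mean value theorem, and your explicit remark that $\omega < 1$ is needed matches an assumption the paper leaves implicit.
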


\begin{proof}[Proof of \cref{prop:nonconvex-FSerror}]
    For any $i \geq 0$, we have
    \begin{align}
        \|\widehat{\grad \Phi}_i-\nabla \Phi_i(x_i)\| &\leq (\sigma_i \|\widehat{\grad F}_i-\nabla F(x_i)\|+\|\widehat{\grad G}_i-\nabla G(x_i)\|) \notag \\
        &\leq (\sigma_0 \|\widehat{\grad F}_i-\nabla F(x_i)\|+\|\widehat{\grad G}_i-\nabla G(x_i)\|) \label{eq:nonconvex-trivial}.
    \end{align}
    Moreover, for any $i > q$, by the mean value theorem there exists $c_i \in [s_i,i]$ such that
    $$\frac{(i^{1-2w}-s_i^{1-2w})}{(1-2w)} = c_i^{-2w}(i-s_i) \leq \frac{q}{(i-q)^{2w}}.$$
    Given $\delta \in (0,1)$ and $i > q $, by \cref{lemma:nonconvex-FSestimator} and the inequality~\cref{eq:nonconvex-trivial}, with probability at least $1- \delta/(i+1)(i+2)$, we have
    \begin{align*}
         \|\widehat{\grad \Phi}_i-\nabla \Phi_i(x_i)\| 
        &\leq  \frac{8(\sigma_0L_f+L_g) D}{(i-q)^w}\sqrt{\log\left(\frac{8(i+1)(i+2)}{\delta}\right)}.
    \end{align*}
    Hence, if $t > q$, with probability at least $1-\delta\sum_{q < i \leq t} 1/(i+1)(i+2)$, we have
    \begin{align*}
        \sum_{q <  i \leq t } \|\widehat{\grad \Phi}_i-\nabla \Phi_i(x_i)\| 
        &\leq\sum_{q <  i \leq t } \frac{8(\sigma_0L_f+L_g) D}{(i-q)^w}\sqrt{\log\left(\frac{8(i+1)(i+2)}{\delta}\right)}\\
        &\leq  \frac{8(\sigma_0L_f+L_g) D (t-q)^{1-w}}{1-w}\sqrt{\log\left(\frac{8(t+2)^2}{\delta}\right)}
    \end{align*}
    where the last inequality follows from the Riemann sum approximation.
\end{proof}
We now instantiate the regularization parameter and analyze \cref{alg:IR-FSCG}.

\begin{lemma} \label{lemma:nonconvex-final-upper-bound}
    Let $\{x_t\}_{t \geq 0}$ denote the iterates generated by \cref{alg:IR-FSCG} with $\alpha_t = \alpha := \log(q+1)/(q+1)$ for any $ 0\leq t \leq q$, $\alpha_t = 1/(t+1)^\omega$ for any $t > q$ and $S=q$. If \cref{ass:smooth-functions} holds with the exception that $F$ may be nonconvex and \cref{IRCG-C4} is satisfied, then given $\delta \in (0,1)$, with probability at least $1-\delta \sum_{0 \leq i < t-1}1/(i+1)(i+2)$, for any $t > q+1$, it holds that
    \begin{equation*} 
    \begin{aligned}
        t^\omega\CG(x_t) &\leq \frac{\CG(x_0)}{(q+1)^{1-\omega}} + \left( 2\varsigma(q+1)^{\omega-p} + \varsigma(q+1)^{-p} \right) \sup_{z \in X} |F(z)| \\
        &\quad + \frac{(\sigma_0L_f+L_g)D^2\log(q+1)}{2(q+1)^{1-w}}\left(1+16\sqrt{\log\left(\frac{8(q+2)^2}{\delta}\right)}\right) \\
        &\quad + \frac{8(\varsigma L_f+L_g) D^2 (t-1-q)^{1-w}}{1-w}\sqrt{\log\left(\frac{8(t+1)^2}{\delta}\right)}+\frac{(\varsigma L_f+L_g)D^2 }{1-\omega} t^{1-\omega}\\
        &\quad + 2\varsigma\sup_{z \in X}|F(z)| t^{\omega-p}-\sigma_0(q+1)^{\omega}\sum_{i = 0}^{q} \alpha \CF(x_i) (1-\alpha)^{q-i} -\sum_{i =q+1}^{t-1} \sigma_i\CF(x_i).
    \end{aligned}
    \end{equation*}
\end{lemma}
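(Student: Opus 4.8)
The plan is to stitch together the three preceding nonconvex results, with the handoff between the constant-step regime $\{0,\dots,q\}$ and the decaying regime $\alpha_t=(t+1)^{-\omega}$ occurring exactly at iterate $q+1$. Since $\sigma_t=\varsigma(t+1)^{-p}$ and $\alpha_t=(t+1)^{-\omega}$ for $t\geq q+1$, and $p\leq\omega$, the hypotheses of \cref{lemma:nonconvex-recursive-rule} hold with $k=q+1$. I would therefore apply its telescoped bound from $k=q+1$ up to index $t-1$ (valid for $t>q+1$), obtaining
\begin{align*}
    t^\omega \CG(x_t) &\leq (q+1)^\omega \CG(x_{q+1}) + 2\varsigma\sup_{z\in X}|F(z)|\, t^{\omega-p} + \sigma_{q+1}F(x_{q+1}) - \sum_{i=q+1}^{t-1}\sigma_i\CF(x_i)\\
    &\quad + \sum_{i=q+1}^{t-1}\left(2D\|\widehat{\grad\Phi}_i - \grad\Phi_i(x_i)\| + \frac{(\sigma_iL_f+L_g)D^2}{2(i+1)^\omega}\right).
\end{align*}
This already contributes the $2\varsigma\sup|F|\,t^{\omega-p}$ term verbatim, and $\sigma_{q+1}F(x_{q+1})\leq \varsigma(q+1)^{-p}\sup|F|$ supplies the $\varsigma(q+1)^{-p}\sup|F|$ term.

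Next I would substitute the initial-phase estimate for $\CG(x_{q+1})$ by evaluating \cref{lemma:nonconvex-initial-phase} at $t=q+1$ and multiplying through by $(q+1)^\omega$. Three target terms fall out: the geometric factor converts the $\CG(x_0)$ contribution via $(1-\alpha)^{q+1}\leq e^{-\alpha(q+1)}=(q+1)^{-1}$ (using $\alpha=\log(q+1)/(q+1)$ and $1-u\leq e^{-u}$), so $(q+1)^\omega(1-\alpha)^{q+1}\CG(x_0)\leq \CG(x_0)/(q+1)^{1-\omega}$; the $2\sigma_0\sup|F|$ term becomes $2\varsigma(q+1)^{\omega-p}\sup|F|$ after bounding $\sigma_0\leq\varsigma(q+1)^{-p}$; and the initial-phase smoothness term, with $\alpha=\log(q+1)/(q+1)$, gives term three. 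Crucially, the signed sum $-\sigma_0(q+1)^\omega\sum_{i=0}^{q}(1-\alpha)^{q-i}\alpha\CF(x_i)$ and $-\sum_{i=q+1}^{t-1}\sigma_i\CF(x_i)$ are carried forward unbounded, since $\CF$ is signed over $X$ and these are controlled only later in the proof of \cref{theorem:nonconvex-finite-sum-convergence}.

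It then remains to bound the two residual sums. For the stochastic error I would apply \cref{prop:nonconvex-FSerror} with upper index $t-1$, producing the $(t-1-q)^{1-\omega}$ term with the factor $\sqrt{\log(8(t+1)^2/\delta)}$ (this uses $\omega>\tfrac12$); for the deterministic smoothness sum I would use monotonicity $\sigma_i\leq\varsigma$ together with the Riemann-sum estimate $\sum_{i=q+1}^{t-1}(i+1)^{-\omega}\leq t^{1-\omega}/(1-\omega)$ (valid since $\omega<1$) to obtain the $t^{1-\omega}$ term. Finally I would combine the failure probabilities of \cref{lemma:nonconvex-initial-phase} (indices $0,\dots,q$) and \cref{prop:nonconvex-FSerror} (indices $q+1,\dots,t-1$) by a union bound, whose index ranges are disjoint, yielding the stated $\delta\sum_{0\leq i<t-1}1/((i+1)(i+2))$.

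The argument is careful bookkeeping rather than a new idea, so the main obstacle is keeping the two-phase accounting consistent: the constant-step contribution must be evaluated at precisely the handoff iterate $q+1$, and the $(q+1)^\omega$ prefactor from the telescoped recursion must combine with the geometric decay $(1-\alpha)^{q+1}$ to produce the advertised $(q+1)^{\omega-1}$ rate rather than a spurious polynomial factor. Matching the numerical constants on the error and smoothness terms needs only $\sigma_i\leq\sigma_0\leq\varsigma$ and the elementary Riemann-sum inequalities, and any slack there leaves the stated rate unaffected.
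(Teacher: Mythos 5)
Your proposal is correct and follows essentially the same route as the paper's proof: telescope \cref{lemma:nonconvex-recursive-rule} with $k=q+1$, substitute the initial-phase bound of \cref{lemma:nonconvex-initial-phase} at the handoff iterate $q+1$ (with $(1-\alpha)^{q+1}\leq 1/(q+1)$ converting the $(q+1)^{\omega}$ prefactor into $(q+1)^{\omega-1}$), control the stochastic sum via \cref{prop:nonconvex-FSerror} and the smoothness sum via a Riemann-sum estimate, bound $\sigma_{q+1}F(x_{q+1})$ by $\varsigma(q+1)^{-p}\sup_{z\in X}|F(z)|$, and take a union bound over the two disjoint index ranges. The only discrepancies are constant-factor and off-by-one bookkeeping (e.g., the $2D\cdot 8D$ product formally gives $16D^2$ rather than the stated $8D^2$), which match the paper's own slack and, as you note, do not affect the stated rate.
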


\begin{proof}[Proof of \cref{lemma:nonconvex-final-upper-bound}]
    By \cref{lemma:nonconvex-recursive-rule}, for any $t \geq q+1$, we have
    \begin{align*}
        (t+1)^\omega \CG(x_{t+1})
        &\leq (q+1)^\omega \CG(x_{q+1}) + 2 \varsigma \sup_{z \in X}|F(z)| (t+1)^{\omega-p}+\sigma_{q+1} F(x_{q+1}) \\
        &\quad -\sum_{i =q+1}^{t} \sigma_i\CF(x_i) + \sum_{i = q+1}^{t}   \left(2\|\widehat{\grad \Phi}_i-\grad \Phi_i(x_i)\|D+\frac{(\sigma_iL_f+L_g)D^2}{2(i+1)^{\omega}}\right).
    \end{align*}
    
    Applying \cref{lemma:nonconvex-initial-phase}, with probability at least $1-\delta \sum_{0 \leq i \leq q}1/(i+1)(i+2)$, we have
    \begin{equation*} 
    \begin{aligned}
        \CG(x_{q+1}) &\leq \frac{1}{q+1} \CG(x_0)+2\sigma_0\sup_{z \in X} |F(z)|-\sigma_0 \sum_{i = 0}^{q} \alpha \CF(x_i) (1-\alpha)^{q - i} \\
        &\quad +\frac{(\sigma_0L_f+L_g)D^2\log(q+1)}{2(q+1)}\left(1+16\sqrt{\log\left(\frac{8(q+2)^2}{\delta}\right)}\right).
    \end{aligned}
    \end{equation*}
    Combining these two bounds with \cref{prop:nonconvex-FSerror} then yields
    \begin{align*}
        (t+1)^\omega \CG(x_{t+1})
        &\leq \frac{1}{(q+1)^{1-\omega}} \CG(x_0)+2\varsigma(q+1)^{\omega-p}\sup_{z \in X} |F(z)|\\
        &\quad+\frac{(\sigma_0L_f+L_g)D^2\log(q+1)}{2(q+1)^{1-w}}\left(1+16\sqrt{\log\left(\frac{8(q+2)^2}{\delta}\right)}\right)\\
        &\quad+\frac{8(\sigma_0L_f+L_g) D (t-q)^{1-w}}{1-w}\sqrt{\log\left(\frac{8(t+2)^2}{\delta}\right)}\\
        &\quad+ 2\varsigma\sup_{z \in X}|F(z)| (t+1)^{\omega-p}+\frac{(\varsigma L_f+L_g)D^2 }{1-\omega} (t+1)^{1-\omega}+\sigma_{q+1} F(x_{q+1})\\
        &\quad-\sigma_0(q+1)^{\omega} \sum_{i = 0}^{q} \alpha \CF(x_i) (1-\alpha)^{q-i} -\sum_{i =q+1}^{t} \sigma_i\CF(x_i). 
    \end{align*}
    The proof concludes using the simple inequality $\sigma_{q+1} F(x_{q+1}) \leq \varsigma(q+2)^{-p}\sup_{z \in X} |F(z)|$.
\end{proof}
We are now well-equipped to prove \cref{theorem:nonconvex-finite-sum-convergence}.
\begin{proof}[Proof of \cref{theorem:nonconvex-finite-sum-convergence}]
    We first focus on the terms involving $\CF(x_i)$ in \cref{lemma:nonconvex-final-upper-bound}. Note that the function $\CF$ is defined with respect to $X_{\opt}$, not $X$. Therefore, $\CF(x_i)$ can take both positive and negative values since $x_i \in X$. However, we can derive the following bound
    \begin{align*}
        & -\sigma_0(q+1)^{\omega}\sum_{i = 0}^{q} \alpha \CF(x_i) (1-\alpha)^{q-i} -\sum_{i =q+1}^{t-1} \sigma_i\CF(x_i) \\
        \leq & \sup_{z \in X} |\CF(z)| \textstyle \left( \sigma_0(q+1)^{\omega}\sum_{i = 0}^{q} \alpha (1-\alpha)^{q-i} +\sum_{i =q+1}^{t-1} \sigma_i\right)\\
        =& \sup_{z \in X} |\CF(z)| \textstyle \left( \sigma_0(q+1)^{\omega} \left( 1 - (1-\alpha)^{q+1} \right) + \sum_{i =q+1}^{t-1} \sigma_i\right) \\
        \leq& \sup_{z \in X} |\CF(z)| \textstyle \left(\varsigma(q+2)^{-p} (q+1)^{\omega} (1-(1-\alpha)^{q+1})+\varsigma\frac{t^{1-p}-(q+1)^{1-p}}{1-p}\right)\\
        \leq& \varsigma  \sup_{z \in X} |\CF(z)| \textstyle \left( (q+1)^{\omega-p} +\frac{t^{1-p}-(q+1)^{1-p}}{1-p}\right),
    \end{align*}
    where $\alpha = \log(q+1) / (q+1)$, and the second and third inequalities follow from the geometric series bound and the definition of $\sigma_t$. Using this bound together with \cref{lemma:nonconvex-final-upper-bound}, we arrive at
    \begin{equation} \label{eq:FS-CG}
    \begin{aligned}
        \CG(x_t) 
        &\leq \left(\frac{\CG(x_0)}{(q+1)^{1-\omega}} + \left( 2 \varsigma(q+1)^{\omega-p} + \varsigma(q+2)^{-p} \right) \sup_{z \in X} |F(z)|\right)t^{-w}\\
        &\quad+\frac{(\sigma_0L_f+L_g)D^2\log(q+1)}{2(q+1)^{1-w}}\left(1+16\sqrt{\log\left(\frac{8(q+2)^2}{\delta}\right)}\right)t^{-w} \\
        &\quad+\frac{8(\varsigma L_f+L_g) D^2 (t-1-q)^{1-w}t^{-w}}{1-w}\sqrt{\log\left(\frac{8(t+1)^2}{\delta}\right)}+\frac{(\varsigma L_f+L_g)D^2 }{1-\omega} t^{1-2\omega} \!\!\! \\
        &\quad +2 \varsigma\sup_{z \in X}|F(z)| t^{-p} + \left(\varsigma(q+1)^{\omega-p} +\varsigma \frac{(t+1)^{1-p}-q^{1-p}}{1-p}\right)t^{-w}\sup_{z \in X} |\CF(z)|.
    \end{aligned}
    \end{equation}
    We next focus on the upper-level problem. 
    Since $\CG(x_{t+1}) \geq 0$, using \cref{lemma:nonconvex-final-upper-bound} and the definition of the sequence $\{ \beta_t \}_{t \geq 0}$, we obtain
    \begin{align*}
        \sum_{i = 0}^{t} \beta_i \CF(x_i)
        &\leq \frac{1}{(q+1)^{1-\omega}} \CG(x_0) + \left( 2\varsigma(q+1)^{\omega-p}\sup_{z \in X}  +\varsigma(q+2)^{-p} \right) \sup_{z \in X} |F(z)|\\
        &\quad+\frac{(\sigma_0L_f+L_g)D^2\log(q+1)}{2(q+1)^{1-w}}\left(1+16\sqrt{\log\left(\frac{8(q+2)^2}{\delta}\right)}\right)\\
        &\quad+\frac{8(\sigma_0L_f+L_g) D (t-q)^{1-w}}{1-w}\sqrt{\log\left(\frac{8(t+2)^2}{\delta}\right)}\\
        &\quad+ 2\varsigma\sup_{z \in X}|F(z)| (t+1)^{\omega-p}+\frac{(\varsigma L_f+L_g)D^2 }{1-\omega} (t+1)^{1-\omega}.
    \end{align*}
    Observe next that
    \begin{align*}
        \sum_{i = 0}^{t} \beta_i
        &\geq \varsigma(q+2)^{-p} (q+1)^{\omega} (1-(1-\alpha)^{q+1})+\varsigma \frac{t^{1-p}-(q+1)^{1-p}}{1-p}\\
        &\geq \varsigma\frac{(q+2)^{w-p}}{3}+\varsigma \frac{t^{1-p}-(q+1)^{1-p}}{1-p} : = a + b t^{1-p}.
    \end{align*}
    where $a$ and $b$ are constants defined to make the equality hold. Combining these bounds, we arrive at
    \begin{equation} \label{eq:FS-CF}
    \begin{aligned}
        \frac{\sum_{i = 0}^{t} \beta_i \CF(x_i)}{\sum_{i = 0}^{t} \beta_i}
        &\leq \left( \frac{\CG(x_0)}{(q+1)^{1-\omega}} + \left( 2\varsigma(q+1)^{\omega-p} +\varsigma(q+2)^{-p} \right) \sup_{z \in X} |F(z)| \right) \left( a + b t^{1-p} \right)^{-1} \\
        &\quad+\frac{(\sigma_0L_f+L_g)D^2\log(q+1)}{2(q+1)^{1-w}}\left(1+16\sqrt{\log\left(\frac{8(q+2)^2}{\delta}\right)}\right) \left( a + b t^{1-p} \right)^{-1} \\
        &\quad+\frac{8(\sigma_0L_f+L_g) D (t-q)^{1-w}}{1-w}\sqrt{\log\left(\frac{8(t+2)^2}{\delta}\right)} \left( a + b t^{1-p} \right)^{-1} \\
        &\quad+ 2\varsigma\sup_{z \in X}|F(z)| (t+1)^{\omega-p} \left( a + b t^{1-p} \right)^{-1} \\
        &\quad+ \frac{(\varsigma L_f+L_g)D^2 }{1-\omega} (t+1)^{1-\omega} \left( a + b t^{1-p} \right)^{-1} .
    \end{aligned}
    \end{equation}
    Both bounds involving $\CG$ and $\CF$ hold with probability at least $ 1-\delta$. We want the right hand side of both bounds $\to 0$. In order to guarantee this, we choose $p,\omega$ to minimize the slowest rate in terms of $t$:
    \[ \min_{p,w : 0 < p \leq \omega} \max\left\{ p-1,w-1,p-w, -w, -p,1-2w, 1-w-p \right\} = -\frac{1}{4},  \]
    which is realized by setting $p=1/2$, $\omega=3/4$ as required.

    As for the second claim on asymptotic convergence, we prove a more general result: If 
    $$\max\left\{ p-1,w-1,p-w, -w, -p,1-2w, 1-w-p \right\} < 0,$$
    then the asymptotic convergence holds. Under this additional assumption and the fact that $\CG(x_t) \geq 0$, we deduce that $\lim_{t \to \infty} \CG(x_t) = 0$ from taking the limit on both sides of \eqref{eq:FS-CG}. Recall that
    $$\liminf_{t \to \infty} \CF(x_t) = \lim_{t \to \infty} \inf_{k \geq t} \CF(x_k),$$
    and
    $$\frac{\sum_{i = 0}^{t}\beta_i\CF(x_i)}{\sum_{i = 0}^{t} \beta_i} \geq \frac{\sum_{i = 0}^{t}\beta_i \inf_{k \geq i}\CF(x_k)}{\sum_{i = 0}^{t} \beta_i}.$$
    Combining these observations and \eqref{eq:FS-CF}, we have
     \begin{align*}
         &\frac{\sum_{i = 0}^{t}\beta_i \inf_{k \geq i}\CF(x_k)}{\sum_{i = 0}^{t} \beta_i}\\
        &\leq \left( \frac{\CG(x_0)}{(q+1)^{1-\omega}} + \left( 2\varsigma(q+1)^{\omega-p} +\varsigma(q+2)^{-p} \right) \sup_{z \in X} |F(z)| \right) \left( a + b t^{1-p} \right)^{-1} \\
        &\quad+\frac{(\sigma_0L_f+L_g)D^2\log(q+1)}{2(q+1)^{1-w}}\left(1+16\sqrt{\log\left(\frac{8(q+2)^2}{\delta}\right)}\right) \left( a + b t^{1-p} \right)^{-1} \\
        &\quad+\frac{8(\sigma_0L_f+L_g) D (t-q)^{1-w}}{1-w}\sqrt{\log\left(\frac{8(t+2)^2}{\delta}\right)} \left( a + b t^{1-p} \right)^{-1} \\
        &\quad+ 2\varsigma\sup_{z \in X}|F(z)| (t+1)^{\omega-p} \left( a + b t^{1-p} \right)^{-1} + \frac{(\varsigma L_f+L_g)D^2 }{1-\omega} (t+1)^{1-\omega} \left( a + b t^{1-p} \right)^{-1} .
    \end{align*}
    By the Stolz–Ces\`{a}ro theorem (since $p \in (0,1)$, $\sum_{t \geq 0} \beta_t = \infty$) and taking the limit on both sides of the above inequality, we obtain $\liminf_{t \to \infty} \CF(x_t) \leq 0$. Since this holds with probability at least $1-\delta$ for any $\delta \in (0,1)$, it also holds almost surely. Due to the fact that any limit point of $\{x_t\}_t$ is in $X_{\opt}$, the continuity of $\CF$ from \cref{lemma:CF-lipschitz} and the fact that $\CF(x) \geq 0$ for any $x \in X_{\opt}$, we deduce that $\liminf_{t \to \infty} \CF(x_t) \geq 0$. Thus, we conclude the proof.
\end{proof}
\section{Additional Discussions}
\label{appendix:additional}

\subsection{Finiteness of constants}
\label{appendix:finite}
The following lemma establishes that the quantities introduced in~\eqref{eq:CV} are finite.

\begin{lemma} \label{lemma:CV}
    If \cref{ass:smooth-functions} and \crefrange{IRCG-C1}{IRCG-C3} hold, then $C,\bar{C}_{\delta}, C_\delta$ defined in \eqref{eq:CV} are finite. Furthermore, if $L$ defined in \cref{IRCG-C2} is strictly less than $1$, then $V$ defined in \eqref{V} is finite.
\end{lemma}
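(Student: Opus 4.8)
The plan is to reduce each supremum to a statement about the limit of the sequence inside it: a convergent sequence of reals is bounded, and every term with finite index is a finite sum divided by a strictly positive quantity, hence finite, so it suffices to exhibit a finite limit for each relevant sequence. Before that I would record two consequences of \crefrange{IRCG-C1}{IRCG-C3}. First, \cref{IRCG-C3} says $u_t := t\sigma_t^2$ is strictly increasing, so $u_t \ge u_1 = \sigma_1^2 > 0$ for $t \ge 1$, giving $\sigma_t \ge \sigma_1 t^{-1/2}$; and since $\log t = o(u_t)$ with $\log t \to \infty$, also $u_t \to \infty$. Second, rewriting \cref{IRCG-C3} as $(\sigma_t/\sigma_{t+1})^2 < 1 + 1/t$ and using $\sqrt{1+x} \le 1 + x/2$ yields $t(\sigma_t/\sigma_{t+1} - 1) < 1/2$, so \cref{IRCG-C2} forces $L \le 1/2$; in particular $L < 1$ holds automatically.

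For the ``easy'' terms I would argue directly. The term $2(\sigma_0 L_f + L_g)D^2/((t+1)\sigma_t)$ is bounded because $(t+1)\sigma_t \ge (t+1)\sigma_1 t^{-1/2} = (\sqrt t + 1/\sqrt t)\sigma_1 \ge 2\sigma_1 > 0$. For $\bar C_\delta$ it suffices to bound the square of the summand, namely $\log(8dt^2/\delta)/((t+1)\sigma_t^2)$; writing $\log(8dt^2/\delta) = \log(8d/\delta) + 2\log t$ and using $(t+1)\sigma_t^2 \ge u_t$ together with $u_t \to \infty$ and $\log t = o(u_t)$ shows this ratio tends to $0$, hence is bounded. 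Thus $\bar C_\delta < \infty$, and $C_\delta = \bar C_\delta + C$ is finite once $C$ is.

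The crux is the weighted sum in $C$ and its $\sigma_i$-weighted analogue in $V$. A summation-by-parts computation gives the identity
\[ \sum_{i=1}^t (i+1)i(\sigma_{i-1}-\sigma_i) = 2\sum_{i=0}^{t-1}(i+1)\sigma_i - t(t+1)\sigma_t, \]
so the bracketed sequence in $C$ equals $2T_t/D_t - 1$ with $T_t := \sum_{i=0}^{t-1}(i+1)\sigma_i$ and $D_t := (t+1)t\sigma_t$. I would then apply the Stolz--Ces\`aro theorem: $D_t$ is strictly increasing (since $(t+2)\sigma_{t+1} > t\sigma_t$, which follows from $\sigma_t/\sigma_{t+1} < \sqrt{1+1/t} < 1 + 2/t$) and $D_t \to \infty$, while $(T_t - T_{t-1})/(D_t - D_{t-1}) = \sigma_{t-1}/((t+1)\sigma_t - (t-1)\sigma_{t-1})$; dividing through by $\sigma_t$ and using $(t-1)(\sigma_{t-1}/\sigma_t - 1) \to L$ from \cref{IRCG-C2} shows this ratio tends to $1/(2-L)$. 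Hence the $C$-sequence converges to $2/(2-L) - 1 = L/(2-L)$, finite as $L \le 1/2$, so $C < \infty$. For $V$ I would apply Stolz--Ces\`aro directly to $N_t/M_t$ with $N_t := \sum_{i\in[t]}(i+1)i(\sigma_{i-1}-\sigma_i)\sigma_i$ and $M_t := (t+1)t\sigma_t^2$; here $M_t$ is strictly increasing and divergent precisely because $u_t = t\sigma_t^2$ is (\cref{IRCG-C3}). Dividing the difference ratio by $t\sigma_t^2$, the numerator $(t+1)(\sigma_{t-1}/\sigma_t - 1) \to L$, and the denominator $(t+1) - (t-1)(\sigma_{t-1}/\sigma_t)^2 \to 2 - 2L$, giving the limit $L/(2(1-L))$, finite whenever $L < 1$.

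The main obstacle is the $V$ computation, whose Stolz--Ces\`aro difference ratio involves $(\sigma_{t-1}/\sigma_t)^2$ rather than $\sigma_{t-1}/\sigma_t$. Controlling the denominator requires the expansion $(\sigma_{t-1}/\sigma_t)^2 = 1 + 2(\sigma_{t-1}/\sigma_t - 1) + (\sigma_{t-1}/\sigma_t - 1)^2$ together with the vanishing of the quadratic remainder $(t-1)(\sigma_{t-1}/\sigma_t - 1)^2$. I would obtain this from \cref{IRCG-C2}: finiteness of $\lim_t t(\sigma_t/\sigma_{t+1} - 1)$ forces $\sigma_{t-1}/\sigma_t - 1 = O(1/t)$, so the remainder is $O(1/t) \to 0$. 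Everything else is routine bookkeeping of $o(1)$ terms and verification of the monotonicity and divergence hypotheses of Stolz--Ces\`aro, both of which are supplied by \cref{IRCG-C3}.
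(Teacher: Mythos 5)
Your proof is correct, and its engine is the same as the paper's: both arguments run the Stolz--Ces\`aro theorem on the two weighted-sum ratios and arrive at the identical limits $L/(2-L)$ (for the bracket in $C$) and $L/(2(1-L))$ (for $V$), with $\bar C_\delta$ dispatched via $\log t = o(t\sigma_t^2)$ from \cref{IRCG-C3}. Three local deviations are worth recording. First, for $C$ you interpose the summation-by-parts identity $\sum_{i=1}^t (i+1)i(\sigma_{i-1}-\sigma_i) = 2\sum_{i=0}^{t-1}(i+1)\sigma_i - t(t+1)\sigma_t$ (which checks out) and apply Stolz--Ces\`aro to $T_t/D_t$, whereas the paper applies it directly to the original sum over $(t+1)t\sigma_t$, computing $\lim_t (t+1)t(\sigma_{t-1}-\sigma_t)/\bigl((t+1)t\sigma_t - t(t-1)\sigma_{t-1}\bigr) = L/(2-L)$; the paper's route is one step shorter, but your difference quotient $\sigma_{t-1}/\bigl((t+1)\sigma_t-(t-1)\sigma_{t-1}\bigr)$ is easier to evaluate. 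Second, where the paper proves only $0 \le L \le 1$ by contradiction ($L>1$ would force $(t+1)\sigma_{t+1}^2 \le t\sigma_t^2$, violating \cref{IRCG-C3}), your observation that $(\sigma_t/\sigma_{t+1})^2 < 1+1/t$ combined with $\sqrt{1+x} \le 1+x/2$ yields $t(\sigma_t/\sigma_{t+1}-1) < 1/2$ gives the sharper $L \le 1/2$, so the lemma's hypothesis $L<1$ is automatic under \cref{IRCG-C3}; note this shortcut exploits the squared condition and does not transfer to the finite-sum analogue (\cref{lemma:FSCV} under \cref{IRCG-C4}, which only forces $L \le 1$), which is presumably why the authors state the hypothesis generically. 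Third, you verify explicitly what the paper glosses: strict monotonicity and divergence of the Stolz denominators (via $(t+2)\sigma_{t+1} > t\sigma_t$, resp.\ \cref{IRCG-C3}) and the lower bound $(t+1)\sigma_t \ge 2\sigma_1$ from $\sigma_t \ge \sigma_1 t^{-1/2}$, where the paper tersely asserts $\min_{t\ge0}(t+1)\sigma_t > 0$ from \cref{IRCG-C2}. Your handling of the squared ratio in the $V$ denominator---expanding $(\sigma_{t-1}/\sigma_t)^2$ and discarding the quadratic remainder because $\sigma_{t-1}/\sigma_t - 1 = O(1/t)$---is equivalent to the paper's algebraic factorization, so nothing is missing.
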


\begin{proof}[Proof of \cref{lemma:CV}]
    We first show that under \cref{IRCG-C1} and \cref{IRCG-C3}, when the limit in \cref{IRCG-C2} exists, the parameter $L$ satisfies $0 \leq L \leq 1$. While it is trivial to see $L\geq 0$ thanks to \cref{IRCG-C1}, $L\leq1$ needs some justifications. For the sake of contradiction, suppose $L>1$. Then for some sufficiently large $t$, we have
    $$t \left( \frac{\sigma_t}{\sigma_{t+1}} - 1\right) > 1 \iff \frac{\sigma_t}{\sigma_{t+1}} > \frac{t + 1}{t} \iff (t + 1) \sigma_{t+1} < t \sigma_t.$$
    This further implies
    $$(t+1)\sigma_{t+1} ^2 \leq (t+1)\sigma_{t+1} \sigma_t \leq t\sigma_t^2,$$
    which always contradicts \cref{IRCG-C3}. Thus, we have $0 \leq L \leq 1$ under our blanket assumptions.
    
    We now show that $C$ is finite. Observe that
    \begin{align*}
        \lim_{t \to \infty} \frac{(t+1)t(\sigma_{t-1}-\sigma_t)}{(t+1)t\sigma_t-t(t-1)\sigma_{t-1}}
        = \lim_{t \to \infty} \frac{(1+\frac{2}{t-1})(t-1)(\frac{\sigma_{t-1}}{\sigma_t}-1)}{2-(t-1)(\frac{\sigma_{t-1}}{\sigma_t}-1)}= \frac{L}{2-L}
    \end{align*}
    where the first equality is obtained by diving both the numerator and the denominator by $t\sigma_t$ and the second equality follows from~\cref{IRCG-C2}. Moreover, we have
    \begin{align*}
    \lim_{t \to \infty} \frac{(t+1)t(\sigma_{t-1}-\sigma_t)}{(t+1)t\sigma_t-t(t-1)\sigma_{t-1}}\!
    &=\!\lim_{t \to \infty} \frac{\sum_{i\in[t]}(i+1)i(\sigma_{i-1}-\sigma_i)-\sum_{i\in[t-1]}(i+1)i(\sigma_{i-1}-\sigma_i)}{(t+1)t\sigma_t-t(t-1)\sigma_{t-1}}\\
    &=\lim_{t \to \infty} \frac{\sum_{i \in [t]}(i+1)i(\sigma_{i-1}-\sigma_i)}{(t+1)t\sigma_t}
    \end{align*}
    where the second equality follows from  the Stolz–Ces\`{a}ro theorem. 
    This yields
    \begin{equation} \label{eq:L/(2-L)}
        \lim_{t \to \infty} \frac{\sum_{i \in [t]}(i+1)i(\sigma_{i-1}-\sigma_i)}{(t+1)t\sigma_t} = \frac{L}{2-L} \implies \sup_{t \geq 1} \frac{\sum_{i \in [t]}(i+1)i(\sigma_{i-1}-\sigma_i)}{(t+1)t\sigma_t} \in (0, \infty).
    \end{equation}
    Using \cref{IRCG-C2}, we have $\min_{t \geq 0} (t+1)\sigma_t > 0$. Thus, $C$ is finite. It is also straightforward to verify that $\bar{C}_\delta$ is finite by \cref{IRCG-C3}. Consequently, $C_\delta$ is finite as well.
    
    To conclude, we show $V$ is finite. From \cref{IRCG-C3}, $\{t\sigma_t\}_{t \geq 0}$ and $\{(t+1)t\sigma_t^2\}_{t \geq 0}$ are increasing and diverge to $\infty$. Note that
    \begin{align*}
        \lim_{t \to \infty} \frac{\sum_{i \in [t]}(i+1)i(\sigma_{i-1}-\sigma_i)\sigma_i}{(t+1)t\sigma_t^2}
        &=\lim_{t \to \infty} \frac{(t+2)(t+1)(\sigma_t-\sigma_{t+1})\sigma_{t+1}}{(t+2)(t+1)\sigma_{t+1}^2-(t+1)t\sigma_t^2}\\
        & = \lim_{t \to \infty} \frac{\left(\frac{t+2}{t}\right)t\left(\frac{\sigma_t}{\sigma_{t+1}}-1\right)}{\left(2-t\left(\frac{\sigma_t}{\sigma_{t+1}}-1\right)\right)\left(\frac{\sigma_t}{\sigma_{t+1}}+1\right)-2\frac{\sigma_t}{\sigma_{t+1}}} 
        = \frac{L}{2(1-L)}
    \end{align*}
    where the first equality is implied by the Stolz-Ces\`aro theorem, the second equality is obtained from dividing both the denominator and the numerator by $(t+1)\sigma_{t+1}^2$, and the third equality comes from~\cref{IRCG-C3} and 
    $$\lim_{t\to\infty}\frac{\sigma_t}{\sigma_{t+1}} = 1+ \lim_{t\to\infty}\frac{1}{t}\cdot t\left(\frac{\sigma_t}{\sigma_{t+1}}-1\right)=1.$$
    Note that $L/2(1-L)\in[0,\infty)$ since $L \in [0,1)$. Thus, $V$ is finite. This completes the proof.
\end{proof}

The following lemma establishes that the quantities introduced in~\eqref{eq:FSCV} are finite.

\begin{lemma} \label{lemma:FSCV}
    If \cref{ass:smooth-functions}, \cref{IRCG-C1}, \ref{IRCG-C2} and \ref{IRCG-C4} hold, then $C_q,\bar{C}_{\delta,q}$ defined in \eqref{barCdeltaq}, and \eqref{Cdeltaq} are finite. Furthermore, if $L$ defined in \cref{IRCG-C2} is less than $1$, then $V_q$ defined in \eqref{Vq} is finite.
\end{lemma}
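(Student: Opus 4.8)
The plan is to mirror the proof of \cref{lemma:CV}, since the quantities in \eqref{eq:FSCV} are the finite-sum analogues of those in \eqref{eq:CV}, with the role of \cref{IRCG-C3} now played by \cref{IRCG-C4}. First I would record $0 \le L \le 1$: the lower bound is immediate from the monotonicity of $\{\sigma_t\}$ in \cref{IRCG-C1}, while $L > 1$ would force $t(\sigma_t/\sigma_{t+1}-1) > 1$, i.e.\ $(t+1)\sigma_{t+1} < t\sigma_t$, for all large $t$, contradicting \cref{IRCG-C4} directly (this is cleaner than the \cref{IRCG-C3} argument, which had to pass through squares). I would also note that \cref{IRCG-C4} makes $\{t\sigma_t\}$ strictly increasing and, with $\log t = o(t\sigma_t)$, divergent to $+\infty$; consequently $\{(t+1)t\sigma_t\}$ is strictly increasing and divergent and $\min_{t\ge 0}(t+1)\sigma_t > 0$.

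For $C_q$, the weighted-difference ratio $\frac{\sum_{i\in[t]}(i+1)i(\sigma_{i-1}-\sigma_i)}{(t+1)t\sigma_t}$ is exactly the quantity whose limit in \eqref{eq:L/(2-L)} equals $L/(2-L)$; the same Stolz--Ces\`aro computation applies verbatim, now with $\{(t+1)t\sigma_t\}$ serving as the increasing, divergent denominator, so its supremum over $t\ge q$ is finite. The only genuinely new term is $\frac{2(\sigma_0 L_f+L_g)D^2(t-q)}{(t+1)t\sigma_t}$, bounded above by $\frac{2(\sigma_0 L_f+L_g)D^2}{(t+1)\sigma_t}\le \frac{2(\sigma_0 L_f+L_g)D^2}{\min_t (t+1)\sigma_t}$; combining these gives finiteness of $C_q$. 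For $\bar{C}_{\delta,q}$, the quantities $(q+1)$, $\log q$ and $\max\{\Phi_0(x_0)-\Phi_0(x_{\opt}),0\}$ are fixed constants, so the first bracket in \eqref{barCdeltaq} is $O(\sqrt{\log t})$ over $(t+1)t\sigma_t$ and tends to $0$, while the second bracket has numerator $O(t\sqrt{\log t})$ and denominator $\sim t^2\sigma_t$, so it behaves like $\frac{\sqrt{\log t}}{t\sigma_t}$, which $\to 0$ because $\log t = o(t\sigma_t)$ and $t\sigma_t\to\infty$. Each term is a bounded function of $t$ with finite limit, so the supremum is finite, and hence so is $C_{\delta,q}=\bar{C}_{\delta,q}+C_q$.

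The main obstacle is $V_q$. Because \cref{IRCG-C4} sets $\sigma_0 = \cdots = \sigma_q$, every summand with $i \le q$ has $\sigma_{i-1}-\sigma_i = 0$, so the numerator of $V_q$ coincides with the full sum $\sum_{i\in[t]}(i+1)i(\sigma_{i-1}-\sigma_i)\sigma_i$ appearing in \eqref{V}. I would then apply Stolz--Ces\`aro exactly as in \cref{lemma:CV} to obtain the limit $L/(2(1-L))$, finite precisely because $L<1$. The subtle point, and where the argument must deviate from \cref{lemma:CV}, is verifying the Stolz--Ces\`aro hypothesis: one needs $\{(t+1)t\sigma_t^2\}$ to be eventually strictly increasing and divergent. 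In \cref{lemma:CV} this followed for free from \cref{IRCG-C3}, which is unavailable here. Instead I would derive it from \cref{IRCG-C2} together with $L<1$: writing $\sigma_{t+1}/\sigma_t = 1 - L/t + o(1/t)$, the ratio $\frac{(t+2)(t+1)\sigma_{t+1}^2}{(t+1)t\sigma_t^2} = (1+2/t)(\sigma_{t+1}/\sigma_t)^2 = 1 + \frac{2(1-L)}{t} + o(1/t)$ exceeds $1$ for all large $t$ when $L<1$, giving eventual strict monotonicity, and divergence follows from $(t+1)t\sigma_t^2 \ge (t\sigma_t)^2 \to \infty$. Establishing this monotonicity is the crux of the deviation; once it is in place, the limit computation is identical to that for \eqref{V} in \cref{lemma:CV}, and $V_q$ is finite.
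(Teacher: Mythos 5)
Your proof is correct and takes exactly the route the paper intends: the paper's own proof of \cref{lemma:FSCV} is omitted with the remark that it is ``similar to the proof of \cref{lemma:CV},'' and your argument is precisely that adaptation, with the simpler contradiction for $L \leq 1$ via $(t+1)\sigma_{t+1} > t\sigma_t$, the observation that the terms with $i \leq q$ vanish so the $V_q$ numerator coincides with that of $V$, and the new term in $C_q$ absorbed by $\min_{t\geq 0}(t+1)\sigma_t > 0$. If anything, your write-up is more complete than the paper's, since you correctly identify and resolve the one step that is not verbatim: under \cref{IRCG-C4} the Stolz--Ces\`aro denominator $(t+1)t\sigma_t^2$ is no longer assumed increasing, and your derivation of its eventual strict monotonicity from \cref{IRCG-C2} with $L<1$, via $(1+2/t)(\sigma_{t+1}/\sigma_t)^2 = 1 + 2(1-L)/t + o(1/t)$, together with divergence from $(t+1)t\sigma_t^2 \geq (t\sigma_t)^2 \to \infty$, is exactly the verification the paper's brevity glosses over.
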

\begin{proof}[Proof of \cref{lemma:FSCV}]
    The proof follows a similar argument to that of \cref{lemma:CV}. Details are omitted for brevity.
\end{proof}

\subsection{Over-parameterized regression: implementation details}
The implementation details are as follows. For \texttt{IR-SCG}, presented in \cref{alg:IR-SCG}, we set $\sigma_t = \varsigma(t+1)^{-1/4}$, where $\varsigma = 10$, along with $\alpha_t = 2/(t+2)$. For \texttt{IR-FSCG}, presented in \cref{alg:IR-FSCG}, we set $S = q = \lfloor \sqrt{n} \rfloor$, $\sigma_t = \varsigma(\max\{t,q\}+1)^{-1/2}$, where $\varsigma = 10$, along with $\alpha_t = 2/(t+1)$ for every $t \geq q$ and $\alpha_t = \log(q)/q$ for every $t < q$. Our linear minimization oracle involves optimization over an $\ell_1$-norm ball, which admits an analytical solution; see \citep[Section 4.1]{JaggiMartin2013}. 
We compare our proposed methods against \texttt{SBCGI} with stepsize $\gamma_t = 0.01/(t+1)$, \texttt{SBCGF} with constant stepsize $\gamma_t = 10^{-5}$ (both using $K_t = 10^{-4}/\sqrt{t+1}, S = q = \lfloor \sqrt{n} \rfloor$), \texttt{aR-IP-SeG} with long ($\gamma_t = 10^{-2}/(t+1)^{3/4}$) and short stepsizes ($\gamma_t = 10^{-7}/(t+1)^{3/4}$, $\rho_t = 10^3(t+1)^{1/4}$, $r = 0.5$), and \texttt{SDBGD} with long ($\gamma_t = 10^{-2}$) and short ($\gamma_t = 10^{-6}$) stepsizes. We initialized all algorithms with randomized starting points. For \texttt{SBCGI} and \texttt{SBCGF}, we generated the required initial point $x'_0$ by running the \texttt{SPIDER-FW} algorithm \citep[Algorithm 2]{yurtsever2019conditional} on the inner-level problem in \eqref{eq:over-param}, using a stepsize of $\gamma_t = 0.1/(t+1)$. The initialization process terminated either after $10^5$ stochastic oracle queries or when the computation time exceeded 100 seconds, with the resulting point serving as $x_0$ for both \texttt{SBCGI} and \texttt{SBCGF}. 
The implementation of these algorithms rely on certain linear optimization or projection oracles. For \texttt{SBCGI} and \texttt{SBCGF}, a linear optimization oracle over the $\ell_1$-norm ball intersecting with a half-space is required, which we employ \texttt{CVXPY} \citep{diamond2016cvxpy} to solve the problems, similar to the implementation in \citep[Appendix F.1]{CaoEtAl2023}. To compute the projection onto the feasible set required for \texttt{aR-IP-SeG}, we used \citep[Algorithm~1]{JohnEtAl2008}. To approximate the outer- and inner-level optimal values $F_{\opt}, G_{\opt}$, we again employ \texttt{CVXPY} to solve the inner-level problem and the reformulation \eqref{eq:recasted-stochastic-bilevel} of the bilevel problem.

\subsection{Dictionary learning: implementation details}

In our experiment, we obtain $A, A',\hat{X}$ from the code provided by \citet{CaoEtAl2023} with $n = n' = 250$ and $p=40, q=50$ . In addition, we also choose $\delta = 3$. For \texttt{IR-SCG}, presented in \cref{alg:IR-SCG}, we set $\sigma_t = \varsigma(t+1)^{-2/7}$ and $\alpha_t = (t+1)^{-6/7}$, where $\varsigma = 0.1$. For \texttt{IR-FSCG}, presented in \cref{alg:IR-FSCG}, we set $S = q = \lfloor \sqrt{n} \rfloor$, $\sigma_t = \varsigma(\max\{t,q+1\}+1)^{-1/2}$, where $\varsigma = 0.1$, along with $\alpha_t = (t+1)^{-3/4}$ for every $t \geq q+1$ and $\alpha_t = \log(q+1)/(q+1)$ for every $t \leq q$. In both experiments, the linear optimization oracle over the $\ell_2$-norm ball admits an analytical solution; see \citep[Section 4.1]{JaggiMartin2013}.
For performance comparison, we implement four algorithms: \texttt{SBCGI} \citep[Algorithm 1]{CaoEtAl2023} with stepsize $\gamma_t = 0.1(t+1)^{-2/3}$, \texttt{SBCGF} \citep[Algorithm 2]{CaoEtAl2023} with constant stepsize $\gamma_t = 10^{-3}$ (both using $K_t = 0.01(t+1)^{-1/3}, S = q = \lfloor \sqrt{n} \rfloor$), \texttt{aR-IP-SeG} \citep{JalilzadehEtAl2024} with long ($\gamma_t = 10^{-2}/(t+1)^{3/4}$) and short stepsizes ($\gamma_t = 10^{-4}/(t+1)^{3/4}$, $\rho_t = (t+1)^{1/4}$, $r = 0.5$), and the stochastic variant of the dynamic barrier gradient descent (\texttt{SDBGD}) \citep{GongEtAl2021} with long ($\gamma_t = 10^{-2}$) and short ($\gamma_t = 5\times 10^{-3}$) stepsizes. We initialized all algorithms with randomized starting points. For \texttt{SBCGI} and \texttt{SBCGF}, we generated the required initial point $x'_0$ by running the \texttt{SPIDER-FW} algorithm \citep[Algorithm 2]{yurtsever2019conditional} on the inner-level problem in \eqref{eq:over-param}, using a stepsize of $\gamma_t = 0.1/(t+1)$. The initialization process terminated either after $10^5$ stochastic oracle queries, with the resulting point serving as $x_0$ for both \texttt{SBCGI} and \texttt{SBCGF}. 
The implementation of these algorithms rely on certain linear optimization or projection oracles. For \texttt{SBCGI} and \texttt{SBCGF}, a linear optimization oracle over the $\ell_2$-norm ball intersecting with a half-space is required, which we employ \texttt{CVXPY} \citep{diamond2016cvxpy} to solve the problems, similar to the implementation in \citep[Appendix F.1]{CaoEtAl2023}. To compute the projection onto the feasible set required for \texttt{aR-IP-SeG}, we used \citep[Algorithm~1]{JohnEtAl2008} to compute projection on $\ell_1$ norm ball and projection onto $\ell_2$ norm ball admits an analytical solution. To approximate inner-level optimal value $G_{\opt}$, we again employ \texttt{CVXPY} to solve the inner-level problem.


\begin{thebibliography}{60}
\providecommand{\natexlab}[1]{#1}
\providecommand{\url}[1]{\texttt{#1}}
\expandafter\ifx\csname urlstyle\endcsname\relax
  \providecommand{\doi}[1]{doi: #1}\else
  \providecommand{\doi}{doi: \begingroup \urlstyle{rm}\Url}\fi

\bibitem[Amini and Yousefian(2019)]{AminiEtAl2019}
M.~Amini and F.~Yousefian.
\newblock An iterative regularized incremental projected subgradient method for
  a class of bilevel optimization problems.
\newblock In \emph{American Control Conference}, pages 4069--4074, 2019.

\bibitem[Beck and Sabach(2014)]{BeckSabach2014}
A.~Beck and S.~Sabach.
\newblock A first order method for finding minimal norm-like solutions of
  convex optimization problems.
\newblock \emph{Mathematical Programming}, 147\penalty0 (1):\penalty0 25--46,
  2014.

\bibitem[Cabot(2005)]{Cabot2005}
A.~Cabot.
\newblock Proximal point algorithm controlled by a slowly vanishing term:
  {A}pplications to hierarchical minimization.
\newblock \emph{SIAM Journal on Optimization}, 15\penalty0 (2):\penalty0
  555--572, 2005.

\bibitem[Cao et~al.(2023)Cao, Jiang, Abolfazli, Yazdandoost~Hamedani, and
  Mokhtari]{CaoEtAl2023}
J.~Cao, R.~Jiang, N.~Abolfazli, E.~Yazdandoost~Hamedani, and A.~Mokhtari.
\newblock Projection-free methods for stochastic simple bilevel optimization
  with convex lower-level problem.
\newblock In \emph{Advances in Neural Information Processing Systems}, pages
  6105--6131, 2023.

\bibitem[Cao et~al.(2025)Cao, Jiang, Hamedani, and Mokhtari]{cao2025complexity}
J.~Cao, R.~Jiang, E.~Y. Hamedani, and A.~Mokhtari.
\newblock On the complexity of finding stationary points in nonconvex simple
  bilevel optimization.
\newblock \emph{arXiv:2507.23155}, 2025.

\bibitem[Cao et~al.(2024)Cao, Jiang, Yazdandoost~Hamedani, and
  Mokhtari]{CaoEtAl2024}
J.~Cao, R.~Jiang, E.~Yazdandoost~Hamedani, and A.~Mokhtari.
\newblock An accelerated gradient method for convex smooth simple bilevel
  optimization.
\newblock In \emph{Advances in Neural Information Processing Systems}, pages
  45126--45154, 2024.

\bibitem[Chen et~al.(2024)Chen, Shi, Jiang, and Wang]{chen2024penalty}
P.~Chen, X.~Shi, R.~Jiang, and J.~Wang.
\newblock Penalty-based methods for simple bilevel optimization under
  {H}\"olderian error bounds.
\newblock \emph{arXiv:2402.02155}, 2024.

\bibitem[Chen et~al.(2021)Chen, Sun, and Yin]{chen2021closing}
T.~Chen, Y.~Sun, and W.~Yin.
\newblock Closing the gap: {T}ighter analysis of alternating stochastic
  gradient methods for bilevel problems.
\newblock In \emph{Advances in Neural Information Processing Systems}, pages
  25294--25307, 2021.

\bibitem[Cutkosky and Orabona(2019)]{CutkoskyOrabona2019}
A.~Cutkosky and F.~Orabona.
\newblock Momentum-based variance reduction in non-convex {SGD}.
\newblock In \emph{Advances in Neural Information Processing Systems}, pages
  15236--15245, 2019.

\bibitem[Dagr{\'e}ou et~al.(2022)Dagr{\'e}ou, Ablin, Vaiter, and
  Moreau]{dagreou2022framework}
M.~Dagr{\'e}ou, P.~Ablin, S.~Vaiter, and T.~Moreau.
\newblock A framework for bilevel optimization that enables stochastic and
  global variance reduction algorithms.
\newblock In \emph{Advances in Neural Information Processing Systems}, pages
  26698--26710, 2022.

\bibitem[Diamond and Boyd(2016)]{diamond2016cvxpy}
S.~Diamond and S.~Boyd.
\newblock {CVXPY}: {A} {P}ython-embedded modeling language for convex
  optimization.
\newblock \emph{Journal of Machine Learning Research}, 17\penalty0
  (83):\penalty0 1--5, 2016.

\bibitem[Doron and Shtern(2023)]{DoronShtern2022}
L.~Doron and S.~Shtern.
\newblock Methodology and first-order algorithms for solving nonsmooth and
  non-strongly convex bilevel optimization problems.
\newblock \emph{Mathematical Programming}, 201\penalty0 (1):\penalty0 521--558,
  2023.

\bibitem[Duchi et~al.(2008)Duchi, Shalev-Shwartz, Singer, and
  Chandra]{JohnEtAl2008}
J.~Duchi, S.~Shalev-Shwartz, Y.~Singer, and T.~Chandra.
\newblock Efficient projections onto the $\ell_1$-ball for learning in high
  dimensions.
\newblock In \emph{International Conference on Machine Learning}, pages
  272--279, 2008.

\bibitem[Dutta and Pandit(2020)]{DuttaPandit2020}
J.~Dutta and T.~Pandit.
\newblock Algorithms for simple bilevel programming.
\newblock In A.~Zemkohoo and S.~Dempe, editors, \emph{Bilevel Optimization:
  Advances and Next Challenges}, pages 253--291. Springer, 2020.

\bibitem[Fang et~al.(2018)Fang, Li, Lin, and Zhang]{FangEtAl2018}
C.~Fang, C.~J. Li, Z.~Lin, and T.~Zhang.
\newblock {SPIDER}: {N}ear-optimal non-convex optimization via stochastic
  path-integrated differential estimator.
\newblock In \emph{Advances in Neural Information Processing Systems}, pages
  687--697, 2018.

\bibitem[Franceschi et~al.(2017)Franceschi, Donini, Frasconi, and
  Pontil]{franceschi2017forward}
L.~Franceschi, M.~Donini, P.~Frasconi, and M.~Pontil.
\newblock Forward and reverse gradient-based hyperparameter optimization.
\newblock In \emph{International Conference on Machine Learning}, pages
  1165--1173, 2017.

\bibitem[Franceschi et~al.(2018)Franceschi, Frasconi, Salzo, Grazzi, and
  Pontil]{franceschi2018bilevel}
L.~Franceschi, P.~Frasconi, S.~Salzo, R.~Grazzi, and M.~Pontil.
\newblock Bilevel programming for hyperparameter optimization and
  meta-learning.
\newblock In \emph{International Conference on Machine Learning}, pages
  1568--1577, 2018.

\bibitem[Friedlander and Tseng(2008)]{FriedlanderEtAl2008}
M.~P. Friedlander and P.~Tseng.
\newblock Exact regularization of convex programs.
\newblock \emph{SIAM Journal on Optimization}, 18\penalty0 (4):\penalty0
  1326--1350, 2008.

\bibitem[Ghadimi and Wang(2018)]{ghadimi2018approximation}
S.~Ghadimi and M.~Wang.
\newblock Approximation methods for bilevel programming.
\newblock \emph{arXiv:1802.02246}, 2018.

\bibitem[Giang-Tran et~al.(2024)Giang-Tran, Ho-Nguyen, and Lee]{GiangEtAl2024}
K.-H. Giang-Tran, N.~Ho-Nguyen, and D.~Lee.
\newblock A projection-free method for solving convex bilevel optimization
  problems.
\newblock \emph{Mathematical Programming}, 2024.

\bibitem[Gong et~al.(2021)Gong, Liu, and Liu]{GongEtAl2021}
C.~Gong, X.~Liu, and Q.~Liu.
\newblock Bi-objective trade-off with dynamic barrier gradient descent.
\newblock In \emph{Advances in Neural Information Processing Systems}, pages
  29630--29642, 2021.

\bibitem[Hassani et~al.(2020)Hassani, Karbasi, Mokhtari, and
  Shen]{hassani2020stochastic}
H.~Hassani, A.~Karbasi, A.~Mokhtari, and Z.~Shen.
\newblock Stochastic conditional gradient++:(non) convex minimization and
  continuous submodular maximization.
\newblock \emph{SIAM Journal on Optimization}, 30\penalty0 (4):\penalty0
  3315--3344, 2020.

\bibitem[Helou and Sim\~{o}es(2017)]{HelouSimoes2017}
E.~S. Helou and L.~E.~A. Sim\~{o}es.
\newblock $\epsilon$-subgradient algorithms for bilevel convex optimization.
\newblock \emph{Inverse Problems}, 33\penalty0 (5):\penalty0 055020, 2017.
\newblock ISSN 0266-5611.

\bibitem[Hsieh et~al.(2024)Hsieh, Thornton, Ndiaye, Klein, Cuturi, and
  Ablin]{hsieh2024careful}
Y.-G. Hsieh, J.~Thornton, E.~Ndiaye, M.~Klein, M.~Cuturi, and P.~Ablin.
\newblock Careful with that scalpel: improving gradient surgery with an {EMA}.
\newblock \emph{arXiv:2402.02998}, 2024.

\bibitem[Hu et~al.(2023)Hu, Wang, Xie, Krause, and Kuhn]{hu2023contextual}
Y.~Hu, J.~Wang, Y.~Xie, A.~Krause, and D.~Kuhn.
\newblock Contextual stochastic bilevel optimization.
\newblock In \emph{Advances in Neural Information Processing Systems}, pages
  78412--78434, 2023.

\bibitem[Jaggi(2013)]{JaggiMartin2013}
M.~Jaggi.
\newblock Revisiting {F}rank-{W}olfe: {P}rojection-free sparse convex
  optimization.
\newblock In \emph{International Conference on Machine Learning}, pages
  427--435, 2013.

\bibitem[Jalilzadeh et~al.(2024)Jalilzadeh, Yousefian, and
  Ebrahimi]{JalilzadehEtAl2024}
A.~Jalilzadeh, F.~Yousefian, and M.~Ebrahimi.
\newblock Stochastic approximation for estimating the price of stability in
  stochastic {N}ash games.
\newblock \emph{ACM Transactions on Modeling and Computer Simulation},
  34\penalty0 (2):\penalty0 1--24, 2024.

\bibitem[Ji et~al.(2021)Ji, Yang, and Liang]{ji2021bilevel}
K.~Ji, J.~Yang, and Y.~Liang.
\newblock Bilevel optimization: Convergence analysis and enhanced design.
\newblock In \emph{International Conference on Machine Learning}, pages
  4882--4892, 2021.

\bibitem[Jiang et~al.(2023)Jiang, Abolfazli, Mokhtari, and
  Hamedani]{JiangEtAl2023}
R.~Jiang, N.~Abolfazli, A.~Mokhtari, and E.~Y. Hamedani.
\newblock A conditional gradient-based method for simple bilevel optimization
  with convex lower-level problem.
\newblock In \emph{International Conference on Artificial Intelligence and
  Statistics}, pages 10305--10323, 2023.

\bibitem[Kaushik and Yousefian(2021)]{KaushikYousefian2021}
H.~D. Kaushik and F.~Yousefian.
\newblock A method with convergence rates for optimization problems with
  variational inequality constraints.
\newblock \emph{SIAM Journal on Optimization}, 31\penalty0 (3):\penalty0
  2171--2198, 2021.

\bibitem[Khanduri et~al.(2021)Khanduri, Zeng, Hong, Wai, Wang, and
  Yang]{khanduri2021near}
P.~Khanduri, S.~Zeng, M.~Hong, H.-T. Wai, Z.~Wang, and Z.~Yang.
\newblock A near-optimal algorithm for stochastic bilevel optimization via
  double-momentum.
\newblock In \emph{Advances in Neural Information Processing Systems}, pages
  30271--30283, 2021.

\bibitem[Konda and Tsitsiklis(1999)]{konda1999actor}
V.~Konda and J.~Tsitsiklis.
\newblock Actor-critic algorithms.
\newblock In \emph{Advances in Neural Information Processing Systems}, pages
  1008--1014, 1999.

\bibitem[Kwon et~al.(2023)Kwon, Kwon, Wright, and Nowak]{kwon2023fully}
J.~Kwon, D.~Kwon, S.~Wright, and R.~D. Nowak.
\newblock A fully first-order method for stochastic bilevel optimization.
\newblock In \emph{International Conference on Machine Learning}, pages
  18083--18113, 2023.

\bibitem[Lu and Mei(2024)]{lu2024first}
Z.~Lu and S.~Mei.
\newblock First-order penalty methods for bilevel optimization.
\newblock \emph{SIAM Journal on Optimization}, 34\penalty0 (2):\penalty0
  1937--1969, 2024.

\bibitem[Malitsky(2017)]{Malitsky2017}
Y.~Malitsky.
\newblock The primal-dual hybrid gradient method reduces to a primal method for
  linearly constrained optimization problems.
\newblock \emph{arXiv:1706.02602}, 2017.

\bibitem[Merchav and Sabach(2023)]{MerchavSabach2023}
R.~Merchav and S.~Sabach.
\newblock Convex bi-level optimization problems with nonsmooth outer objective
  function.
\newblock \emph{SIAM Journal on Optimization}, 33\penalty0 (4):\penalty0
  3114--3142, 2023.

\bibitem[Merchav et~al.(2024)Merchav, Sabach, and Teboulle]{MerchavEtAl2024}
R.~Merchav, S.~Sabach, and M.~Teboulle.
\newblock A fast algorithm for convex composite bi-level optimization.
\newblock \emph{arXiv:2407.21221}, 2024.

\bibitem[Nguyen et~al.(2017)Nguyen, Liu, Scheinberg, and
  Tak{\'a}{\v{c}}]{nguyen2017sarah}
L.~M. Nguyen, J.~Liu, K.~Scheinberg, and M.~Tak{\'a}{\v{c}}.
\newblock {SARAH}: {A} novel method for machine learning problems using
  stochastic recursive gradient.
\newblock In \emph{International Conference on Machine Learning}, pages
  2613--2621, 2017.

\bibitem[Pedregosa(2016)]{pedregosa2016hyperparameter}
F.~Pedregosa.
\newblock Hyperparameter optimization with approximate gradient.
\newblock In \emph{International Conference on Machine Learning}, pages
  737--746, 2016.

\bibitem[Petrulionyt{\.e} et~al.(2024)Petrulionyt{\.e}, Mairal, and
  Arbel]{petrulionyte2024functional}
I.~Petrulionyt{\.e}, J.~Mairal, and M.~Arbel.
\newblock Functional bilevel optimization for machine learning.
\newblock In \emph{Advances in Neural Information Processing Systems}, pages
  14016--14065, 2024.

\bibitem[Pinelis(1994)]{Pinelis1994}
I.~Pinelis.
\newblock Optimum bounds for the distributions of martingales in {B}anach
  spaces.
\newblock \emph{The Annals of Probability}, 22\penalty0 (4):\penalty0
  1679--1706, 1994.

\bibitem[Raghu et~al.(2020)Raghu, Raghu, Bengio, and Vinyals]{raghu2020rapid}
A.~Raghu, M.~Raghu, S.~Bengio, and O.~Vinyals.
\newblock Rapid learning or feature reuse? {T}owards understanding the
  effectiveness of {MAML}.
\newblock In \emph{International Conference on Learning Representations}, 2020.

\bibitem[Rozemberczki et~al.(2021)Rozemberczki, Scherer, He, Panagopoulos,
  Riedel, Astefanoaei, Kiss, Beres, L\'{o}pez, Collignon, and
  Sarkar]{RozemberczkiEtAl2021}
B.~Rozemberczki, P.~Scherer, Y.~He, G.~Panagopoulos, A.~Riedel, M.~Astefanoaei,
  O.~Kiss, F.~Beres, G.~L\'{o}pez, N.~Collignon, and R.~Sarkar.
\newblock Pytorch geometric temporal: {S}patiotemporal signal processing with
  neural machine learning models.
\newblock In \emph{International Conference on Information and Knowledge
  Management}, pages 4564--4573, 2021.

\bibitem[Sabach and Shtern(2017)]{SabachEtAl2017}
S.~Sabach and S.~Shtern.
\newblock A first order method for solving convex bilevel optimization
  problems.
\newblock \emph{SIAM Journal on Optimization}, 27\penalty0 (2):\penalty0
  640--660, 2017.

\bibitem[Samadi et~al.(2024)Samadi, Burbano, and
  Yousefian]{samadi2024achieving}
S.~Samadi, D.~Burbano, and F.~Yousefian.
\newblock Achieving optimal complexity guarantees for a class of bilevel convex
  optimization problems.
\newblock In \emph{American Control Conference}, pages 2206--2211, 2024.

\bibitem[Shehu et~al.(2021)Shehu, Vuong, and Zemkoho]{ShehuEtAl2021}
Y.~Shehu, P.~T. Vuong, and A.~Zemkoho.
\newblock An inertial extrapolation method for convex simple bilevel
  optimization.
\newblock \emph{Optimization Methods and Software}, 36\penalty0 (1):\penalty0
  1--19, 2021.

\bibitem[Shen et~al.(2023)Shen, Ho-Nguyen, and
  K{\i}l{\i}n{\c{c}}-Karzan]{ShenLingqing2023}
L.~Shen, N.~Ho-Nguyen, and F.~K{\i}l{\i}n{\c{c}}-Karzan.
\newblock An online convex optimization-based framework for convex bilevel
  optimization.
\newblock \emph{Mathematical Programming}, 198\penalty0 (2):\penalty0
  1519--1582, 2023.

\bibitem[Shen et~al.(2019)Shen, Fang, Zhao, Huang, and
  Qian]{shen2019complexities}
Z.~Shen, C.~Fang, P.~Zhao, J.~Huang, and H.~Qian.
\newblock Complexities in projection-free stochastic non-convex minimization.
\newblock In \emph{International Conference on Artificial Intelligence and
  Statistics}, pages 2868--2876, 2019.

\bibitem[Solodov(2007)]{Solodov2006}
M.~Solodov.
\newblock An explicit descent method for bilevel convex optimization.
\newblock \emph{Journal of Convex Analysis}, 14\penalty0 (2):\penalty0 227,
  2007.

\bibitem[Stackelberg(1952)]{stackelberg1952theory}
H.~Stackelberg.
\newblock \emph{The Theory of Market Economy}.
\newblock Oxford University Press, 1952.

\bibitem[Thoma et~al.(2024)Thoma, P{\'a}sztor, Krause, Ramponi, and
  Hu]{thoma2024contextual}
V.~Thoma, B.~P{\'a}sztor, A.~Krause, G.~Ramponi, and Y.~Hu.
\newblock Contextual bilevel reinforcement learning for incentive alignment.
\newblock In \emph{Advances in Neural Information Processing Systems}, pages
  127369--127435, 2024.

\bibitem[Tikhonov and Arsenin(1977)]{tikhonov1977solutions}
A.~N. Tikhonov and V.~Y. Arsenin.
\newblock \emph{Solutions of Ill-Posed Problems}.
\newblock V. H. Winston \& Sons, 1977.
\newblock Translated from the Russian, Preface by translation editor Fritz
  John, Scripta Series in Mathematics.

\bibitem[Tseng(2008)]{Tseng2008}
P.~Tseng.
\newblock On accelerated proximal gradient methods for convex-concave
  optimization.
\newblock Technical report, Department of Mathematics, University of
  Washington, 2008.

\bibitem[Wang et~al.(2024)Wang, Shi, and Jiang]{wang2024near}
J.~Wang, X.~Shi, and R.~Jiang.
\newblock Near-optimal convex simple bilevel optimization with a bisection
  method.
\newblock In \emph{International Conference on Artificial Intelligence and
  Statistics}, pages 2008--2016, 2024.

\bibitem[Yang et~al.(2021)Yang, Ji, and Liang]{yang2021provably}
J.~Yang, K.~Ji, and Y.~Liang.
\newblock Provably faster algorithms for bilevel optimization.
\newblock In \emph{Advances in Neural Information Processing Systems}, pages
  13670--13682, 2021.

\bibitem[Yang et~al.(2023)Yang, Xiao, and Ji]{yang2023achieving}
Y.~Yang, P.~Xiao, and K.~Ji.
\newblock Achieving {$\mathcal O(\epsilon^{-1.5})$} complexity in
  {H}essian/{J}acobian-free stochastic bilevel optimization.
\newblock In \emph{Advances in Neural Information Processing Systems}, pages
  39491--39503, 2023.

\bibitem[Yurtsever et~al.(2019)Yurtsever, Sra, and
  Cevher]{yurtsever2019conditional}
A.~Yurtsever, S.~Sra, and V.~Cevher.
\newblock Conditional gradient methods via stochastic path-integrated
  differential estimator.
\newblock In \emph{International Conference on Machine Learning}, pages
  7282--7291, 2019.

\bibitem[Zhang et~al.(2024)Zhang, Chen, Xu, and Zhang]{ZhangEtAl2024}
H.~Zhang, L.~Chen, J.~Xu, and J.~Zhang.
\newblock Functionally constrained algorithm solves convex simple bilevel
  problem.
\newblock In \emph{Advances in Neural Information Processing Systems}, pages
  57591--57618, 2024.

\bibitem[Zhang et~al.(2020{\natexlab{a}})Zhang, Chen, Mokhtari, Hassani, and
  Karbasi]{zhang2020quantized}
M.~Zhang, L.~Chen, A.~Mokhtari, H.~Hassani, and A.~Karbasi.
\newblock Quantized {F}rank-{W}olfe: {F}aster optimization, lower
  communication, and projection free.
\newblock In \emph{International Conference on Artificial Intelligence and
  Statistics}, pages 3696--3706, 2020{\natexlab{a}}.

\bibitem[Zhang et~al.(2020{\natexlab{b}})Zhang, Shen, Mokhtari, Hassani, and
  Karbasi]{zhang2020one}
M.~Zhang, Z.~Shen, A.~Mokhtari, H.~Hassani, and A.~Karbasi.
\newblock One sample stochastic {Frank-Wolfe}.
\newblock In \emph{International Conference on Artificial Intelligence and
  Statistics}, pages 4012--4023, 2020{\natexlab{b}}.

\end{thebibliography}
\end{document}